\newtheorem{theorem}{Theorem}[section]
\newtheorem{lemma}[theorem]{Lemma}
\newtheorem{proposition}[theorem]{Proposition}
\newtheorem{remark}{Remark}[section]
\newtheorem{example}{Example}[section]
\newtheorem{definition}{Definition}[section]
\newtheorem{corollary}[theorem]{Corollary}
\newtheorem{claim}{Claim}
\numberwithin{equation}{section}
\renewcommand{\q}{\quad}
\newcommand{\Id}{{\bf 1}}
\def\supp{{\text{\rm supp }}}
\newcommand*{\rom}[1]{\expandafter\@slowromancap\romannumeral #1@}
\newcommand{\bsb}{\boldsymbol}
\begin{document}

\title[Sharp Estimates]{Sharp estimates for trilinear oscillatory integrals and an algorithm of two-dimensional resolution of singularities}

\author{Lechao Xiao}
\address{ Department of Mathematics, University of Pennsylvania, Philadelphia, PA 19104, USA}
\email{xle@math.upenn.edu}
%    General info
 \subjclass[2010]{Primary 42B20, Secondary 14E15}
 \keywords{Oscillatory integrals, Resolution of singularities, Sharp estimates, Newton polyhedra, Algorithm}
%\subjclass{Primary 54C40, 14E20; Secondary 46E25, 20C20}
\date{November 15, 2013}

%\dedicatory{This paper is dedicated to our authors.}

\begin{abstract}
We obtain sharp estimates for certain trilinear oscillatory integrals, extending Phong and Stein's seminal result to a trilinear setting. This result partially answers a question raised by Christ, Li, Tao and Thiele, concerning sharp estimates for certain multilinear oscillatory integrals. The method in this paper relies on a self-contained algorithm of resolution of singularities in $\mathbb R^2$,  which may be of independent interest. 
\end{abstract}

\maketitle

\section{Introduction}\label{intro}
\vspace{0.2in}

The purpose of this paper is to study sharp decay estimates of the following trilinear oscillatory integrals:
\begin{align}\label{lambda99}
\Lambda_S(f_1,f_2,f_3) = \iint e^{i\lambda S(x,y)} f_1(x)f_2(y)f_3(x+y)  a(x,y)\,dxdy,
\end{align}
where $a(x,y)$ is a smooth cut-off function supported in a sufficiently small neighborhood of $0$, and the phase $S(x,y)$ is a real analytic function in $\mathbb R^2$.  

\subsection{\large Background }\q

Consider the following oscillatory integral operator
\begin{align}\label{h3}
T(f)(x) = \int e^{i\lambda S(x,y)} f(y) a(x,y)\,dy,
\end{align}
where $S(x,y)$ is a smooth real-valued function in $\mathbb R^{n}\times \mathbb R^n$ and $a(x,y)$ is a smooth cut-off function. 

One of the central topics in oscillatory integrals is 
the study of the asymptotic behavior of $\|T\|_{2\to2}$ as $|\lambda|\to\infty$.
Equivalently, from a view of duality, can one find the optimal decay
$C(\lambda)$ s.t.
 \begin{align}
 |\Lambda_2(f,g)|\leq C(\lambda)\|f\|_2\|g\|_2,
 \end{align}
where
the bilinear form $\Lambda_2(f,g)$ is given by
\begin{align}\label{dual00}
\Lambda_2(f,g) =\langle T(f),g\rangle = \iint e^{i\lambda S(x,y)} f(y)\overline{g(x)} a(x,y)\,dxdy?
\end{align}

During the last decades, this problem and related topics have been extensively studied by many authors. 
Fruitful results have been obtained via rich techniques. 
We begin with a classical result from H\"ormander \cite{HOR73}, 
concerning sharp $L^2$ estimates of (\ref{h3}) when $S$ is non-degenerate: 
\begin{theorem}[\textbf{H\"ormander} \cite{HOR73}]\label{h0}
Assume $a(x,y)$ is a smooth cut-off function supported in a neighborhood of $0$ and $S(x,y)$
 is a smooth function such that 
\begin{align}\label{h1}
\left|\det  \frac{\partial^2 S}{\partial x\partial y}\right| \geq 1,  \q\mbox{for all}\q (x,y)\in \supp(a).
\end{align}
Then one has
\begin{align}\label{h2}
\|T(f)\|_2 \leq C |\lambda|^{-n/2}\|f\|_2.\end{align}
\end{theorem}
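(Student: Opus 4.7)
The plan is to prove the estimate via the $TT^*$ method, reducing the $L^2\to L^2$ bound on $T$ to a Schur-test estimate on the kernel of $TT^*$. Since $\|T\|_{2\to 2}^2=\|TT^*\|_{2\to 2}$, one begins by computing that $TT^*$ is an integral operator with kernel
\begin{align*}
K(x,z) = \int e^{i\lambda[S(x,y)-S(z,y)]}\,a(x,y)\overline{a(z,y)}\,dy,
\end{align*}
so the goal reduces to showing $\sup_x\!\int|K(x,z)|\,dz$ and $\sup_z\!\int|K(x,z)|\,dx$ are both at most $C|\lambda|^{-n}$.

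The heart of the argument is a non-stationary phase estimate on $K$. Setting $\Phi_{x,z}(y)=S(x,y)-S(z,y)$, the fundamental theorem of calculus gives
\begin{align*}
\nabla_y\Phi_{x,z}(y) = \left(\int_0^1 \frac{\partial^2 S}{\partial x\partial y}(z+t(x-z),y)\,dt\right)(x-z).
\end{align*}
Provided $\supp(a)$ is chosen small enough that the mixed Hessian remains uniformly invertible along every segment inside it -- a straightforward consequence of hypothesis (\ref{h1}) and continuity -- one obtains the crucial lower bound $|\nabla_y\Phi_{x,z}(y)|\gtrsim|x-z|$ uniformly in $y$. Introducing the first-order operator
\begin{align*}
L = \frac{1}{i\lambda|\nabla_y\Phi_{x,z}|^2}\,\nabla_y\Phi_{x,z}\cdot\nabla_y,
\end{align*}
which satisfies $L(e^{i\lambda\Phi_{x,z}})=e^{i\lambda\Phi_{x,z}}$, and integrating by parts $N$ times, each application of the adjoint $L^*$ contributes a factor of $(|\lambda||x-z|)^{-1}$ together with bounded amplitude derivatives; this yields
\begin{align*}
|K(x,z)| \leq C_N (1+|\lambda||x-z|)^{-N}
\end{align*}
for every $N\geq 0$.

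Since $a$ is supported in a ball of radius $R$ about the origin, for $N>n$ one has
\begin{align*}
\int|K(x,z)|\,dz \leq C\int_{|x-z|\leq 2R}(1+|\lambda||x-z|)^{-N}\,dz \lesssim |\lambda|^{-n},
\end{align*}
uniformly in $x$, and the same bound holds with the roles of $x$ and $z$ swapped. Schur's test then delivers $\|TT^*\|_{2\to 2}\leq C|\lambda|^{-n}$, which is exactly (\ref{h2}). The principal technical obstacle is the uniform lower bound $|\nabla_y\Phi_{x,z}(y)|\gtrsim|x-z|$: it rests on the integrated Hessian in the displayed identity remaining nondegenerate with a quantitative inverse bound as $(x,z,y)$ range over the support, and it is precisely here that the smallness of $\supp(a)$ is essential. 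Once this quantitative non-stationarity is in hand, the remainder is a routine manipulation of oscillatory integrals.
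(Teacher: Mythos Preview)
Your argument is the standard and correct $TT^*$ proof of H\"ormander's theorem. Note, however, that the paper does not give its own proof of this statement: Theorem~\ref{h0} is quoted from \cite{HOR73} as a classical background result, without proof. There is therefore nothing in the paper to compare your proposal against; your write-up is essentially the textbook argument one finds in H\"ormander's original work or in standard references on oscillatory integrals.

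One small remark on your presentation: the theorem as stated only assumes the determinant bound on $\supp(a)$, not on its convex hull, so the line-integral representation of $\nabla_y\Phi_{x,z}$ requires either that $\supp(a)$ be convex or that one first reduce, via a partition of unity, to amplitudes supported in small balls on which the mixed Hessian stays uniformly invertible. You acknowledge this (``smallness of $\supp(a)$ is essential''), but it would be cleaner to state explicitly that the general case follows from the small-support case by a finite partition of unity, since the constant $C$ is allowed to depend on $a$.
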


Establishing sharp estimates in a more general setting, 
in particular when $S(x,y)$ is degenerate, was significantly more difficult. 
Until the early $`$90s, by the seminal works of Phong and Stein \cites{PS94-1, PS94-2, PS97}, 
a full understanding of (\ref{h3}) was obtained when $S$ is a real analytic function of two variables.
 In their works, a systematic treatment was introduced to deal with the degenerate setting. 
The key ingredient to characterize the sharp decay rate is the geometric concept: Newton Polyhedra. 

\begin{definition}
Assume the Taylor series expansion of $S(x,y)$ is given by
\begin{align}\label{Tarlor83}
S(x,y)=\sum_{p,q\in\mathbb N}c_{p,q}x^py^q.
\end{align}
The Newton polyhedron of $S$ is defined as
$$
\mathcal N(S) ={\rm Conv}(\cup_{p,q}\{(u,v)\in\mathbb R^2: u\geq p, v\geq q \,\mbox{and}\,\,c_{p,q}\neq 0\}),
$$
where Conv$(X)$ denotes the convex hull of a set $X\subset \mathbb R^2$. 
 The Newton distance of $\mathcal N(S)$ is defined to be
$$
\delta_S =\inf \{t\in\mathbb R: (t,t)\in\mathcal N(S)\}.
$$
\end{definition}

\begin{theorem}[\textbf{Phong-Stein \cite{PS97}}]\label{psmain}
Let $S(x,y)$ be real-analytic and assume the support of $a(x,y)$ is 
contained in a sufficiently small neighborhood of $0\in\mathbb R^2$, 
then
$$
\|T(f) \|_2 \leq C |\lambda|^{-\frac{1}{2(1+\delta)}}\|f\|_2,
$$
where $\delta$ is the Newton distance of $\mathcal N(\partial_x\partial_y S)$. 
\end{theorem}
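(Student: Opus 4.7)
The plan is to follow the Phong--Stein strategy: combine a resolution of singularities of the zero variety of $S_{xy}:=\partial_x\partial_y S$ with an operator van der Corput lemma and a Cotlar--Stein almost orthogonality argument.

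First I would dyadically decompose $\supp(a)$ into pieces $R_{jk}\subset\{|x|\sim 2^{-j},\,|y|\sim 2^{-k}\}$, further refined so that on each $R_{jk}$ either a single monomial of $S_{xy}$ dominates (a \emph{vertex region}) or two adjacent monomials balance (an \emph{edge region}). Denote the corresponding localized operator by $T_{jk}$. On a vertex region associated to a vertex $(\alpha,\beta)$ of $\mathcal N(S_{xy})$ one has $|S_{xy}|\sim 2^{-j\alpha-k\beta}$, and after rescaling $R_{jk}$ to a unit square and applying Theorem \ref{h0} to the rescaled phase, together with the trivial Schur bound, one obtains
\begin{align*}
\|T_{jk}\|_{2\to 2}\ \lesssim\ \min\bigl(2^{-(j+k)/2},\ \lambda^{-1/2}\,2^{(j\alpha+k\beta)/2}\bigr).
\end{align*}

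Summing this bound over $(j,k)$ in the cone where the vertex $(\alpha,\beta)$ dominates, and optimizing the direction of summation within that cone, produces a geometric series whose value is a negative power of $\lambda$; among all such vertex (and analogous edge) contributions the slowest decay is achieved on the face of $\mathcal N(S_{xy})$ met by the diagonal $\{(t,t)\}$, which by definition of $\delta_S$ gives exactly the rate $\lambda^{-1/(2\delta_S)}$. The distinct scales are then assembled via Cotlar--Stein, since $T_{jk}T_{j'k'}^*$ is negligible unless $|j-j'|+|k-k'|=O(1)$ -- the pieces $R_{jk}$ live on essentially disjoint $y$-strips, so the $y$-integration produces rapid decay off the diagonal in $(j,k)$. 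Edge regions between adjacent vertices are first straightened by an auxiliary change of variables $y=cx^\sigma+\widetilde y$ whose exponent $\sigma$ matches the slope of the corresponding edge of $\mathcal N(S_{xy})$; after this blow-up each edge region reduces to a vertex-type estimate for a modified phase.

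The \emph{main obstacle} is that near the real roots of the principal edge part of $S_{xy}$ the pointwise lower bound $|S_{xy}|\gtrsim 2^{-j\alpha-k\beta}$ fails dramatically, and with it the oscillatory estimate above. To recover control one must iterate the straightening along every Puiseux branch of $\{S_{xy}=0\}$, producing a finite tree of coordinate changes that terminates at leaves whose Newton data no longer requires further resolution. Keeping careful track of how the Newton polyhedron transforms under these successive blow-ups -- and proving that the effective Newton distance at every leaf is still dominated by the original $\delta_S$ -- is where the technical heart of the argument lies; this is precisely what the paper's self-contained two-dimensional resolution algorithm is built to accomplish.
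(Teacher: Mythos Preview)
The paper does not prove Theorem~\ref{psmain}; it is quoted as a known result of Phong and Stein \cite{PS97}, used only as background and motivation for the trilinear Theorem~\ref{main}. There is therefore no ``paper's own proof'' to compare your proposal against. The paper's resolution algorithm (Section~\ref{res}) and the operator van der Corput lemma (Theorem~\ref{ps}) could indeed be combined to reprove Theorem~\ref{psmain} along the lines you sketch --- this is essentially the Greenblatt argument \cite{GR04} that the paper explicitly cites as inspiration --- but the paper itself applies that machinery only to the trilinear form $\Lambda_S$, not to the bilinear operator $T$.

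Your outline is a reasonable high-level description of the Phong--Stein/Greenblatt strategy, and your identification of the ``main obstacle'' (the failure of the monomial lower bound near the real branches of $\{S_{xy}=0\}$, requiring iterated Puiseux-type blow-ups) is accurate. One correction: the almost-orthogonality you invoke is more delicate than ``$R_{jk}$ live on essentially disjoint $y$-strips.'' For fixed $k$ the pieces with varying $j$ share the same $y$-strip, so the Cotlar--Stein input is not simply disjointness of supports; in the actual proofs one uses either the oscillatory decay of $T_{jk}T_{j'k'}^*$ coming from the phase, or (as in Section~\ref{pmain} of this paper for the trilinear case) a finer decomposition adapted to the curved triangular regions produced by the resolution, where orthogonality in \emph{both} variables is recovered via the diagonal geometry of those regions.
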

It had been shown by the classic work of Varchenko, confirming earlier hypotheses of Arnold, 
the Newton polyhedron is the notion to characterize the decay rate of scalar oscillatory integrals \cite{VAR76}.

\subsection{\large Motivation}\label{MO13} \q

In this paper, we study some trilinear analogues of the above problems. Set
\begin{align}
\Lambda_S(f_1,f_2,f_3) = \iint e^{i\lambda S(x,y)} f_1(x)f_2(y)f_3(x+y)  a(x,y)dxdy. \tag{\ref{lambda99}}
\end{align}
Likewise, we want to characterize the optimal constant $\epsilon$ s.t. the following is true
\begin{align}
|\Lambda_S(f_1,f_2,f_3)| \leq C|\lambda|^{-\epsilon}\|f_1\|_2\|f_2\|_2\|f_3\|_2,
\end{align}
for some constant $C$ independent of $f_1$, $f_2$, $f_3$ and $\lambda$.

The study of the trilinear form (\ref{lambda99}) is also motivated by the work of Christ, Li, Tao and Thiele \cite{CLTT05}, where certain multilinear oscillatory integrals were studied in a very general setting. 

To formulate the questions posed in \cite{CLTT05}, we need some preliminary notations.
Let $\bsb\pi=(\pi_1,\dots,\pi_J)$, where each $\pi_j: \mathbb R^n\to \mathbb R^{n_j}\subset \mathbb R^n$ is a surjective linear projection. Let $S:\mathbb R^n\to \mathbb R$ be a polynomial and $a(x)$ be a smooth cut-off function supported in a small neighborhood of $0\in \mathbb R^n$. For each $j$, let $f_j:\mathbb R^{n_j}\to \mathbb C$ be a measurable function. Consider the following multilinear form:
\begin{align}\label{multi}
\Lambda_{S,\bsb\pi}(f_1,f_2,\dots,f_J) = \int e^{i\lambda S(x)}a(x)\prod_{j=1}^J f_j\circ\pi_j(x)dx.
\end{align}
\textbf{Q1:} For what kind of input $(S,\bsb\pi)$, the following is true
\begin{align}\label{fund}
|\Lambda_{S,\bsb\pi}(f_1,f_2,\dots,f_J)| \leq C|\lambda|^{-\delta} \prod_{j=1}^J \|f_j\|_{p_j}
\end{align}
for some $\delta>0$, some $\bsb p= (p_1,\dots,p_J)\in [1,\infty]^J$ and all $f_j\in L^{p_j}(\mathbb R^{n_j})$? 
\\
\textbf{Q2:} If \textbf{Q1} could be answered affirmatively, what is the optimal exponent $\delta$?

Giving a complete answer to \textbf{Q1} for a most general input $(S,\bsb\pi)$ is quite challenging. Still, an affirmative answer to \textbf{Q1} was given in \cite{CLTT05} under certain dimension assumptions on $\bsb\pi$. For further progress of \textbf{Q1}, we refer the readers to \cites{CHR11-1,CHR11-2,CHS11,GR08}.

For \textbf{Q2}, some results were already known. For instance, when $J=2$, $n_1=n_2=n/2$ (assume $n$ is even) and $S$ is smooth, Theorem \ref{h0} provides a sufficient characterization when the best possible decay can be obtained. Theorem \ref{psmain} settled the case $n=J=2$ and $S$ is an arbitrary analytic function; see \cite{GR05,RY01,SEE99} for $S\in C^\infty(\mathbb R^2)$. For $n=J\geq 2$ and $S$ is a polynomial, almost sharp estimates (probably up to a power of $\log |\lambda|$) were known, by the work of Phong, Stein and Sturm \cite{PSS01}.
 
In Christ, Li, Tao and Thiele's attempt to answer \textbf{Q1}, 
an important step is a reduction to the trilinear setting.
 Thus, it is helpful to fully understand the trilinear case, 
in particular to determine the optimal exponent in (\ref{fund}) in this setting. 
This motivates us to study  
the sharp estimate of the trilinear form (\ref{lambda99}), which corresponds to the case $n=2$, $J=3$, $S$ is analytic and $\bsb\pi =\bsb\pi_0$, where 
\begin{align}\label{sim}
\bsb\pi_0(x,y)=(\pi_{01}(x,y),\pi_{02}(x,y),\pi_{03}(x,y))=(x,y,x+y).
\end{align} 
Indeed, a more general setting
\begin{align}\label{lambda}
\Lambda_{S,\bsb\pi}(f_1,f_2,f_3) = \iint e^{i\lambda S(x,y)} a(x,y)\prod_{j=1}^3f_j\circ \pi_j(x,y) dxdy,
\end{align}
can be reduced to (\ref{lambda99}) via an invertible linear transformation in $\mathbb R^2$ (see Section \ref{gg}),
where 
$\pi_j:\mathbb R^2\to \mathbb R$ are pairwise linearly independent projections for $j=1, 2, 3$.

One necessary condition for (\ref{lambda}) to possess a decay bound is that $S$ should be non-degenerate relative to $\bsb\pi$, in the sense $S(x,y)$ cannot be represented as a sum of functions of $\{\pi_j(x,y)\}$. Otherwise, if 
$$
S(x,y)=\sum_{1\leq j \leq 3}S_j\circ \pi_j(x,y),
$$
then we can incorporate each $e^{i\lambda S_j\circ \pi_j(x,y)}$ into $f_j\circ\pi_j(x,y)$ by setting 
$$
\tilde f_j\circ\pi_j(x,y)=e^{i\lambda S_j\circ \pi_j(x,y)}f_j\circ\pi_j(x,y).
$$
 Since $\|\tilde f_j\|_{p_j} =\|f_j\|_{p_j}$, one cannot expect any decay as in (\ref{fund}).% or (\ref{fund23}). 

Let $\pi_j^\perp:\mathbb R^2\to \mathbb R\subset \mathbb R^2$ be linear projections s.t. $\pi_j\circ\pi_j^\perp=0$ and $\|\pi_j^\perp\|_2=1$. Set $\bsb\pi^\perp=(\pi_1^\perp,\pi_2^\perp,\pi_3^\perp)$ and $D_{\bsb\pi^\perp} =\prod_{j=1}^3 \pi_j^{\perp}\cdot\nabla$. %, where $D_v$ is the differential operator  defined as $D_v=v\cdot \nabla$. '
Then $S$ is called simply degenerate relative to $\bsb\pi$ if $D_{\bsb\pi^\perp} S\equiv 0$ \cite{CLTT05}, 
otherwise $S$ is called simply non-degenerate relative to $\bsb\pi$. 
In addition, $S$ is simply degenerate at a point $(x_0,y_0)$ if $D_{\bsb\pi^\perp} S(x_0,y_0)= 0$. 
Simple non-degeneracy implies non-degeneracy and the converse is not true in general. 
But in our case, they are equivalent; see \textbf{Proposition 3.1} in \cite{CLTT05}.

\subsection{\large Results}\q

The following theorem, extending Theorem \ref{h0} to the trilinear setting when $n=1$, states that if $S$ is simply non-degenerate everywhere in Conv$(\supp (a))$, then one can obtain the optimal bound of (\ref{lambda}). 
\begin{theorem}\label{global70}
Assume $a(x,y)$ is a smooth cut-off function supported in a neighborhood of $0\in\mathbb R^2$ and $S(x,y)$ is smooth s.t.  
\begin{align}\label{condition1}
\left |D_{\bsb\pi^{\perp}}S(x,y)\right|\geq 1 \q \mbox{for all} \q(x,y)\in {\rm Conv}(supp(a)).
\end{align}
Then  
\begin{align}\label{tri70}
|\Lambda_{S,\bsb\pi}(f_1,f_2,f_3) |\leq C |\lambda|^{-1/6} \prod_{j=1}^3\|f_j\|_2.
\end{align}
\end{theorem}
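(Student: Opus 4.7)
My plan is to reduce the trilinear estimate for $\Lambda_{S,\bm\pi}$ to a bilinear oscillatory integral of Hörmander type, to which Theorem~\ref{h0} can be applied after a rescaling. First, substitute $z=x+y$ and write $\Lambda=\int f_3(z)\,H(z)\,dz$ with $H(z)=\int e^{i\lambda S(x,z-x)}a(x,z-x)f_1(x)f_2(z-x)\,dx$. By Cauchy--Schwarz applied to $f_3$ it suffices to show $\|H\|_2^2\le C|\lambda|^{-1/3}\|f_1\|_2^2\|f_2\|_2^2$.

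Expanding $\|H\|_2^2$ and introducing the difference variable $h=x-x'$ together with the change $z=x+y$ gives the quadrilinear representation
\begin{align*}
\|H\|_2^2=\int_{\mathbb R} dh\iint e^{i\lambda\Phi_h(x,y)}\,a_h(x,y)\,F_1(x,h)\,F_2(y,h)\,dx\,dy,
\end{align*}
where $F_1(x,h)=f_1(x)\overline{f_1(x-h)}$, $F_2(y,h)=f_2(y)\overline{f_2(y+h)}$, $a_h(x,y)=a(x,y)\overline{a(x-h,y+h)}$, and $\Phi_h(x,y)=S(x,y)-S(x-h,y+h)$. A direct Taylor expansion yields
\begin{align*}
\partial_x\partial_y\Phi_h(x,y)=S_{xy}(x,y)-S_{xy}(x-h,y+h)=\sqrt{2}\,h\,D_{\bm\pi^\perp}S(x,y)+O(h^2),
\end{align*}
so by the hypothesis $|D_{\bm\pi^\perp}S|\ge 1$ one has $|\partial_x\partial_y\Phi_h|\gtrsim|h|$ on $\mathrm{Conv}(\mathrm{supp}\,a)$ for $|h|$ small. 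Applying a rescaled form of Theorem~\ref{h0} to the bilinear oscillatory integral with phase $\lambda\Phi_h$ and amplitude $a_h$ then gives
\begin{align*}
\Big|\iint e^{i\lambda\Phi_h}a_h F_1 F_2\,dx\,dy\Big|\le C\,\min\!\bigl(1,(\lambda|h|)^{-1/2}\bigr)\,\|F_1(\cdot,h)\|_2\,\|F_2(\cdot,h)\|_2.
\end{align*}

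To close the argument, I integrate this pointwise-in-$h$ bound against $dh$, making use of the autocorrelation identities $\int\|F_j(\cdot,h)\|_2^2\,dh=\|f_j\|_2^4$ for $j=1,2$ together with $\int\min(1,(\lambda|h|)^{-1})\,dh\lesssim|\lambda|^{-1}\log|\lambda|$. Combined with a Cauchy--Schwarz on $h$ and a dyadic decomposition of the $h$-integration (separating the regime $|h|\lesssim|\lambda|^{-1}$ from $|h|\gtrsim|\lambda|^{-1}$), this produces the desired $|\lambda|^{-1/3}$ bound on $\|H\|_2^2$.

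The main obstacle is the last step: a naive Cauchy--Schwarz in $h$ controls $\|F_j(\cdot,h)\|_2$ only through $\|f_j\|_4^2$ (pointwise in $h$) rather than $\|f_j\|_2^2$, producing an estimate with undesired $L^4$ norms of $f_1,f_2$. Overcoming this requires balancing the $(\lambda|h|)^{-1/2}$ decay against the integrated autocorrelation structure on each dyadic piece, together with interpolation against the trivial bound $|\Lambda|\le C\prod\|f_j\|_2$ (obtained from Minkowski in the $(y,x+y)$-variables), so that the final estimate is expressed purely in terms of $\|f_j\|_2$.
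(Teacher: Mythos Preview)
Your setup is exactly the paper's: pair off $f_3$ by Cauchy--Schwarz, expand $\|H\|_2^2$, introduce the shift variable $h$, and reduce to a bilinear oscillatory integral with phase $\Phi_h(x,y)=S(x,y)-S(x-h,y+h)$ to which H\"ormander's Theorem~\ref{h0} applies. The difficulty you flag in your last paragraph is real, but your proposed fix (split at $|h|\sim|\lambda|^{-1}$, dyadic sum, interpolation) is misaimed and does not close. Two concrete points:

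\medskip
\noindent\textbf{(1) Wrong threshold.} The split should be at $|h|=|\lambda|^{-1/3}$, not $|\lambda|^{-1}$. For $|h|\ge|\lambda|^{-1/3}$ one has $(\lambda|h|)^{-1/2}\le|\lambda|^{-1/3}$ uniformly, so you may pull this constant out and then apply Cauchy--Schwarz in $h$ directly:
\[
\int_{|h|\ge|\lambda|^{-1/3}}\!\!\|F_1(\cdot,h)\|_2\,\|F_2(\cdot,h)\|_2\,dh
\ \le\ \Bigl(\int\|F_1(\cdot,h)\|_2^2\,dh\Bigr)^{1/2}\Bigl(\int\|F_2(\cdot,h)\|_2^2\,dh\Bigr)^{1/2}
=\|f_1\|_2^2\|f_2\|_2^2.
\]
No $L^4$ norms appear, because the autocorrelations are integrated \emph{separately}.

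\medskip
\noindent\textbf{(2) Missing trivial bound for small $h$.} For $|h|\le|\lambda|^{-1/3}$ you should not use H\"ormander at all. Simply put absolute values inside the inner double integral; then
\[
\Bigl|\iint e^{i\lambda\Phi_h}a_h F_1F_2\,dx\,dy\Bigr|\le\|F_1(\cdot,h)\|_1\,\|F_2(\cdot,h)\|_1\le\|f_1\|_2^2\,\|f_2\|_2^2,
\]
the last step by Cauchy--Schwarz in $x$ (resp.\ $y$). This bound is uniform in $h$, so integrating over $|h|\le|\lambda|^{-1/3}$ gives $|\lambda|^{-1/3}\|f_1\|_2^2\|f_2\|_2^2$ immediately. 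Your attempted Cauchy--Schwarz on $\int\min(1,(\lambda|h|)^{-1})\,dh$ produces a logarithmic divergence precisely because you kept the $L^2$ norms of $F_j$ in the small-$h$ region, where they are the wrong tool.

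\medskip
A smaller issue: your Taylor expansion $\partial_x\partial_y\Phi_h=\sqrt{2}\,h\,D_{\bm\pi^\perp}S+O(h^2)$ only gives the Hessian lower bound for $|h|$ small. The paper instead uses the mean value theorem,
\[
\partial_x\partial_y\Phi_h(x,y)=h\cdot(\partial_x-\partial_y)S_{xy}(x-t_0,y+t_0)\quad\text{for some }t_0\in[0,h],
\]
and then invokes the \emph{convexity} hypothesis on $\mathrm{supp}(a)$ to guarantee $(x-t_0,y+t_0)\in\mathrm{Conv}(\mathrm{supp}(a))$, so that $|\partial_x\partial_y\Phi_h|\ge C|h|$ holds for \emph{all} relevant $h$, not just small ones. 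This is the reason the convexity assumption is in the statement.
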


This theorem is indeed implicitly proved by Li \cite{LI08}.  
We also extend Theorem \ref{psmain} to the trilinear form (\ref{lambda}). %Different to Phong and Stein's result, 
Different to what was expected, the characterization of the sharp exponent 
in this case is distinct from the ones in Varchenko's or Phong--Stein's results. 
Instead, it is described algebraically by the relative multiplicity of $S$.  
Write $S$ as a sum of homogeneous polynomials 
$
S(x,y) = \sum_{i} S_i(x,y), %= \sum_{i}  \sum_{p+q=i}c_{p,q}x^py^q. 
$
%where 
%$$
%S_i(x,y) =\sum_{p+q=i}c_{p,q}x^py^q. 
%$$
 where each $S_i$  is a homogeneous polynomial of degree $i$. The multiplicity of $S$, i.e. the order of the zero of the function $S$ at the origin, is 
$$
{\rm mult}(S) = \min\{i:  S_i\neq 0\}.%=  \min\{p+q:  c_{p,q}\neq 0\}. 
$$
 We also adopt the convention that ${\rm mult}(S) =-\infty$ if $S= 0$. 
 The multiplicity of $S$ relative to $\bsb\pi$ is defined as
\begin{align}
{\rm mult}_{\bsb\pi}(S) = \min\{i:D_{\bsb\pi^\perp} S_i \neq 0\} ={\rm mult}(D_{\bsb\pi^{\perp}}S) +3,
\end{align}
which is the multiplicity of the quotient of $S$ by the class of degenerate analytic functions.
Notice that if $S$ is simply degenerate, then ${\rm mult}_{\bsb\pi}(S)=-\infty$. 
One of the two main results of this paper is: 
\begin{theorem}\label{main}
Assume $S(x,y)$ is a real analytic function %which is simply non-degenerate relative to $\bsb\pi$ 
and the support of $a(x,y)$ is sufficiently small.
Then
\begin{align}\label{ex}
|\Lambda_{S,\bsb\pi} (f_1,f_2,f_3)| \leq C |\lambda|^{-\frac{1}{2{\rm mult}_{\bsb\pi}(S)}}\prod_{j=1}^3\|f_j\|_2.
\end{align}
The result (\ref{ex}) is sharp in the sense that if $a(0,0)\neq 0$, then  
\begin{align}\label{79}
|\Lambda_{S,\bsb\pi} (f_1,f_2,f_3)| \geq C' |\lambda|^{-\frac{1}{2{\rm mult}_{\bsb\pi}(S)}}\prod_{j=1}^3\|f_j\|_2,
\end{align}
as $|\lambda| \to \infty$, for some $C'>0$ and some $\{f_j\}_{1\leq j\leq 3}$.
\end{theorem}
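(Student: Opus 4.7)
\textbf{Proof plan for Theorem~\ref{main}.} The strategy has two independent parts. For the upper bound~(\ref{ex}) we resolve the singularities of $D_{\bsb\pi^\perp}S$ via the two-dimensional algorithm advertised in the abstract and reduce, on each resolved piece, to a trilinear model integral with a monomial phase. For the lower bound~(\ref{79}) we make an explicit choice of $f_1,f_2,f_3$ and compute directly.

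After the linear change of variables described in Section~\ref{gg} we may assume $\bsb\pi=\bsb\pi_0=(x,y,x+y)$, so that $D_{\bsb\pi^\perp}$ is a non-zero constant multiple of $\partial_x\partial_y(\partial_x-\partial_y)$. Set $m={\rm mult}_{\bsb\pi}(S)$; then $H:=D_{\bsb\pi^\perp}S$ is real-analytic of multiplicity $m-3$. The algorithm decomposes a small neighborhood of the origin into finitely many curved sectors $\Omega_\nu$; on each sector a smooth monomial-like change of coordinates $\Phi_\nu\colon(s,t)\mapsto(x,y)$ brings $S$ into the form $s^{a_\nu}t^{b_\nu}U_\nu(s,t)$ modulo terms annihilated by $D_{\bsb\pi^\perp}$, and brings the three projections $\pi_j\circ\Phi_\nu$ into a pairwise linearly independent configuration. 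The exponent pairs $(a_\nu,b_\nu)$ are read off from the vertices of $\mathcal N(H)$, and the worst (slowest-decay) sector corresponds to the vertex on the diagonal line $u+v=m-3$, so that $a_\nu+b_\nu=m$ there.

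On each sector, after a further dyadic rescaling, the trilinear form reduces to a model integral
\begin{align*}
\iint e^{i\mu\,s^a t^b}\,g_1(s)\,g_2(t)\,g_3(s+t)\,\chi(s,t)\,ds\,dt,
\end{align*}
with $\chi$ a bump on the unit square and effective parameter $\mu$ proportional to $\lambda$ times a power of the sector's size. For this model the plan is Cauchy--Schwarz in $g_3$, substituting $z=s+t$, followed by an operator van der Corput argument in the spirit of Theorem~\ref{h0} applied to the resulting bilinear phase $\mu[s^a(z-s)^b-s'^a(z-s')^b]$; the relevant mixed derivative carries a Jacobian of size $\mu$, yielding the sharp rate $|\mu|^{-1/(2(a+b))}$. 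Summing these bounds over the $\Omega_\nu$ and using the geometric information from the previous paragraph gives~(\ref{ex}) with exponent $1/(2m)$, since only finitely many pieces appear and the controlling vertex lies on $u+v=m-3$. The lower bound~(\ref{79}) is then quick: take $f_j=\mathbf 1_{[-r,r]}$ with $r=|\lambda|^{-1/m}$; on the region $|x|,|y|,|x+y|\le r$ the phase $\lambda S(x,y)$ has absolute value $O(1)$ by the definition of $m$, so $|\Lambda_{S,\bsb\pi}|\gtrsim r^2$, while $\prod_j\|f_j\|_2\sim r^{3/2}$, and the ratio is $r^{1/2}=|\lambda|^{-1/(2m)}$.

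The principal obstacle is the resolution-of-singularities step: the decomposition must respect not only the two coordinate axes (as in Phong--Stein) but also the diagonal direction $x+y$ coming from $\pi_3$. A sector whose bounding curve is tangent at the origin to any $\pi_j^\perp$ would destroy the $L^2$-estimate for $f_j$, so the algorithm must carefully avoid such alignments while still being fine enough to put the phase into monomial-times-unit form. Capturing the exponent $m={\rm mult}_{\bsb\pi}(S)$ rather than the Phong--Stein exponent $\delta_S$ is a direct consequence of this finer decomposition, which tracks the third-order derivative $D_{\bsb\pi^\perp}S$ rather than $\partial_x\partial_y S$. Once this is in place, the model estimate and the assembly are essentially routine.
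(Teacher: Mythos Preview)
Your lower-bound argument is essentially the paper's own: choose $f_j=\mathbf 1_{[-r,r]}$ with $r\sim|\lambda|^{-1/m}$. One detail you skipped is that before the phase satisfies $|\lambda S(x,y)|=O(1)$ on the box you must first strip off the homogeneous components of $S$ of degree $<m$ that are annihilated by $D_{\bsb\pi^\perp}$; these are exactly of the form $S_{n,1}(x)+S_{n,2}(y)+S_{n,3}(x+y)$ and can be absorbed into the $f_j$ without changing norms. Once that is done your computation goes through.

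The upper bound, however, has a real gap. Your plan is to change variables on each sector so that the phase becomes (essentially) $s^at^bU_\nu(s,t)$ and the trilinear form becomes a model
\[
\iint e^{i\mu s^at^b}\,g_1(s)\,g_2(t)\,g_3(s+t)\,\chi(s,t)\,ds\,dt.
\]
But the resolution maps $\rho_n^{-1}$ are genuinely nonlinear in the second variable, $y=\gamma(x)+x^{M}y_n$, and under any such map the three projections $x,\ y,\ x+y$ do \emph{not} become three linear functions of $(s,t)$. In particular $f_2(y)$ and $f_3(x+y)$ become functions of both new variables, and the product $f_1f_2f_3$ no longer has the tensor-like structure your model integral assumes. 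Your parenthetical claim that $\pi_j\circ\Phi_\nu$ remain ``pairwise linearly independent'' projections is exactly the point that fails, and your stated obstacle (avoid sectors tangent to $\pi_j^\perp$) does not repair it: the problem is the nonlinearity of the map, not the orientation of the sector.

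The paper avoids this by never changing variables in the integral. The resolution algorithm is used only to produce a smooth partition $\{\Phi_{n,g,\bsb\alpha,j}\}$ of the original $(x,y)$ domain and to obtain, on each piece and in the \emph{original} coordinates, the size-and-derivative bounds
\[
|P(x,y)|\sim|x_n^{p}y_n^{q}|,\qquad |\partial_y^\beta P(x,y)|\lesssim |x_n^{p}y_n^{q-\beta}|\cdot x^{-\beta(m_0+\dots+m_{n-1})},
\]
for $P=\partial_x\partial_y(\partial_x-\partial_y)S$. With these in hand the analytic input is not H\"ormander's Theorem~\ref{h0} but the trilinear van der Corput lemma (Theorem~\ref{local}), which needs precisely lower bounds on $|D_{\bsb\pi_0^\perp}S|$ and upper bounds on its $y$-derivatives on a convex set; combined with the trilinear Schur bound (Lemma~\ref{schur}) this gives two competing estimates on each dyadic box, and the exponent $1/(2m)$ falls out when one optimizes. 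For the deeper sectors ($n\ge1$) an almost-orthogonality argument in $x$ (exploiting that the sector is a thin tube about a curve $y\approx\gamma(x)$) is needed to sum without loss. None of this machinery appears in your plan, and the Cauchy--Schwarz/$TT^*$ step you sketch for the model would, at best, reproduce Theorem~\ref{local} in the special case of a monomial phase---but you never legitimately reach that model.
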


Existence of a (non-sharp) decay rate in the bound of (\ref{ex}) was treated in \cite{CLTT05}. 
What is new and interesting here is its sharpness and its connection to the given phase. 
It is intriguing to compare the above statement with Theorem \ref{psmain}:
\begin{enumerate}
\item Where do the exponents in both Theorem come from? 
 \item 
 Why do the exponents look so different?
\item Show that both exponents are sharp. 
\end{enumerate}
We provide our answers here since they are quite accessible. 
We begin with interpreting both exponents geometrically via the Newton Polyhedron of $P$, where  
\begin{align}\label{p00}
P(x,y) =\partial_x\partial_y(\partial_x-\partial_y)S(x,y)
\end{align}
in the trilinear setting and %Notice
\begin{align}\label{b00}
P(x,y) =\partial_x\partial_yS(x,y)
\end{align}
 in the bilinear setting. Index the vertices of $\mathcal N(P)$ from left to right by $V_1=(p_1,q_1)$, $V_2=(p_2,q_2)$, $\dots$, $V_k=(p_k,q_k)$, 
the compact edges by $E_1=V_1V_2$, $E_2=V_2V_3$, $\dots$, $E_{k-1}=V_{k-1}V_{k}$, and the vertical and horizontal edges by $E_0$ and $E_k$. 
The set of faces, denoted by $\mathcal F(P)$, is the union of the set of vertices and the set of edges.  
The set of supporting lines
of $\mathcal N(P)$, denoted by $\mathcal {SL}(P)$ are the lines 
that intersect the boundary of $\mathcal N(P)$ and do not intersect any other points of $\mathcal N(P)$. 
There is a one-to-one correspondence $\mathcal M$ between $\mathcal {SL}(P)$ and $[0,\infty]$,
given by defining
$\mathcal M(L)$ to be the negative reciprocal of the slope of $L$
for each $L\in\mathcal {SL}(P)$.% 
%$m\in[0,\infty]$, there is a unique $\mathcal M^{-1}(m)\in\mathcal {SL}(P)$ with $m$ as the negative reciprocal of its slope. 
 We often use the notation $L_m\in \mathcal {SL}(P)$ to refer the supporting line $\mathcal M(L_m) =m$. 
Since each $E_j$ lies in exactly one supporting line, we also use $E_j$ to denote that line. 
Let $m_j =\mathcal M(E_j)$, then 
$$
0=m_0< m_1<m_2< \cdots < m_{k-1}<m_k=\infty.
$$
We assign two decay constants $d_L$ and $\delta_L$ to each $L\in \mathcal {SL}(P)$ as follows 
\begin{enumerate} [(1)]
\item  $d_L =\min \{d_{L,x}, d_{L,y}\}$, the minimum of the $x$-intercept and the $y$-intercept of $L$;
\item $(\delta_L,\delta_L)$ is the intersection of $L$ and the bisecting line $p=q$. 
\end{enumerate}
We associate each $L_m\in \mathcal {SL}(P)$ in the $(p,q)$-plane with a region $|y|\sim |x|^m$ in the $(x,y)$-plane,
which will be known as the `good' region defined by $L_m$; see Section \ref{res}.
The two decay constants assigned to $L\in \mathcal {SL}(P)$ are then associated to the decay rates of the trilinear and bilinear forms 
when the support of the cut-off function $a(x,y)$ is localized to the associated region of $L$.

Next we assign the corresponding decay constants to each face $F$ as follows 
\begin{align}
d_F=\sup d_L \q\textrm{and}\q \delta_F =\sup \delta_L,
\end{align}
where both supremums range over all $L\in\mathcal{SL}(P)$ containing $F$.  
%\begin{enumerate}[	(1.)]
%\item For a vertex $V$, let $d_V =\sup_L d_L$ and $\delta_V =\sup_L \delta_L$, where both supremums range over all supporting lines containing $V$;
%\item For a edge $E$, its decay constants $d_E$ and $\delta_E$ are just the same
%as those viewing $E\in\mathcal {SL}(P)$. 
%\end{enumerate}
Then
\begin{align*}
{\rm mult}(P)&=\max\{d_F: \,\, F\in\mathcal F(P) \} =\sup\{d_L: L\in\mathcal {SL}(P)\}
\end{align*}
 and
\begin{align*}
\delta_P &=\max\{\delta_F: \,\,\, F\in\mathcal F(P) \} =\sup\{\delta_L: L\in\mathcal {SL}(P)\}.
\end{align*}

For simplicity, we restrict our attention to the non-vertical edges. 
Let $E=E_j$, $m=m_j$ and $(p,q)=(p_j,q_j)$ for some $j$.
In the trilinear setting, there are subtle differences between $m\leq 1$ and $m\geq 1$. 
In the case $m\leq 1$, one has 
$ 
d_{E}=d_{E,x} =p+mq. 
$
%where $d_{E,x}$ is the $x$-intercept of $E$. 
%Here $p_j+m_{j}q_j$ is indeed the $x$-intercept of $E_j$. 
The value $|\lambda|^{-\frac{1}{2(3+d_{E})}}$ 
corresponds to the bound of $\Lambda_S$ 
when restricting $\supp (a)$ to the region associated to $E$. 
If $m\geq 1$,  then
$
d_{E}=d_{E,y} \geq p/m+q.
$
When restricting $\supp (a)$ to the regions assoicated to $E$,
 the corresponding bound of $\Lambda_S$ is $|\lambda|^{-\frac{1}{2(3+d_{E})}}$; see Proposition \ref{p1} and Proposition \ref{p3}. 
 In general, the bound of $\Lambda_S$ is $|\lambda|^{-\frac{1}{2(3+d_{F})}}$, when $\supp(a)$ is restricted to the region associated to $F$. 
%Similarly, when restricted to the `good' region defined by the vertex $V$,
% the corresponding bound of $\Lambda$ is $|\lambda|^{-\frac{1}{2(3+d_{V})}}$; see Proposition \ref{p2}.
%Finally, 
%the sharp bound $C|\lambda|^{-\frac{1}{2(3+{\rm mult}(P))}}$ 
%will be obtained via the supporting line whose slope is $-1$, i.e. in the region $|y|\sim |x|$. 
However, such difference does not arise in the bilinear setting 
and we only need one expression for $\delta_{E}$ which is $\delta_E =(p+mq)/(1+m)$. 
%When $\supp(a)$
 %is restricted to the region associated to $E$, the corresponding bound is $|\lambda|^{-\frac{1}{2(1+\delta_{E})}}$.%; See Proposition \ref{py1} and  Proposition \ref{py3}.
 %and i
 In general, the bound for the bilinear form is $|\lambda|^{-\frac{1}{2(1+\delta_{F})}}$,
  when $\supp(a)$ is restricted to the region associated to $F$. 
 %Similarly, when restricted to the `good' region defined by the vertex $V$,
 %the corresponding bound is $|\lambda|^{-\frac{1}{2(1+\delta_{V})}}$.
 %Finally, the sharp estimate is obtained via the so-called main face,
 %the edge that intersects the bisecting line or a vertex that lies on it. 

For the second question, the noticeable difference between the operators is the extra term $f_3(x+y)$ in the trilinear form.
 If $x$ and $y$ vary in intervals of length $\delta_1$ and $\delta_2$ respectively, then $(x+y)$ varies in an interval of length about $\max\{\delta_1, \delta_2\}$.
 If $f_3$ is a characteristic function supported in this interval, 
 then $\|f_3\|_2\sim \max\{\delta_1^{1/2}, \delta_2^{1/2}\}$. 
 Heuristically, this freezes the ratio $\log |\delta_1|/\log |\delta_2|$ to be 1, 
 if one wants to optimize the bound of the trilinear form. 
 %Indeed, our example proving the sharpness of Theorem \ref{main} is constructed in this favor. 
However, without the extra term $f_3(x+y)$, the ratio $\log |\delta_1|/\log |\delta_2|$ is totally free. 
The affect of such difference between the operators is realized 
by the difference between the following two Schur's type lemmas:  
\begin{lemma}\label{schur000}
Assume $a(x,y)$ is a measurable function supported in a strip of $x$-width no more than $\delta_1$ and $y$-width no more than
$\delta_2$. Assume $\|a\|_\infty \leq 1$. Then 
\begin{align}\label{schur001}
\left|\iint f_1(x)f_2(y)  a(x,y) dx dy \right|\leq C(\delta_1 \delta_2)^{1/2}\|f_1\|_2\|f_2\|_2.
\end{align}
\end{lemma}

\begin{lemma}\label{schur}
The assumptions on $a(x,y)$ are the same as in Lemma \ref{schur000}. Then
\begin{align}\label{schur1}
\left|\iint f_1(x)f_2(y) f_3(x+y) a(x,y) dx dy \right|\leq C \min\{\delta_1^{1/2}, \delta_2^{1/2}\}\|f_1\|_2\|f_2\|_2\|f_3\|_2.
\end{align}
\end{lemma}
Lemma \ref{schur000} is employed in \cite{PS97} to control the norm of the operator in Theorem \ref{psmain} 
when the phase fails to provide sufficient decay. Lemma \ref{schur} plays the same role in our proof;
see Section \ref{gg} for its proof.

In Lemma \ref{schur000}, the two parameters $\delta_1$ and $\delta_2$ are symmetric, 
 which illustrates why there is no constraint on the ratio $\log |\delta_1|/\log |\delta_2|$. 
% Geometrically, we have certain freedom to choose the slopes of the supporting lines of the Newton polyhedron.
 % A supporting lineis a line that intersects and only intersects the boundary of the Newton polyhedron. 
 Indeed, the sharpness of Theorem \ref{psmain} can be verified by setting $f_1=\Id_{[0, \delta_1]}$ and $f_2=\Id_{[0, \delta_1^m]}$
 with an appropriate choice of $\delta_1$ and with $m$ being the negative reciprocal of the slope of
 any supporting lines
 containing the so-called main face, i.e. the edge that intersects the bisecting line or the vertex that lies on it.
 
 However, such symmetry breaks down in the trilinear setting, 
 even though the parameters $x$ and $y$ appear symmetrically in $\Lambda_S$.  
One should think of (\ref{schur1}) as two different estimates: the first one with the bound $C\delta_1^{1/2}$ 
and the second one with $C\delta_2^{1/2}$.
Consequently, they will lead to two different bounds for (\ref{lambda99}) accordingly. %in terms of the 2-norms of the functions. 
The first (second) bound comes from employing the first (second) estimate of (\ref{schur1}) and Theorem \ref{local}, with decay exponents 
 represented in terms of $x$-intercepts $d_{E_j,x}$ ($y$-intercepts $d_{E_j,y}$). %in  (\ref{DEJ}).  
%The other employs the second estimate of (\ref{schur1}) and Theorem \ref{local}, with the decay exponents represented
%in terms of $y$-intercepts $d_{E_j,y}$ above. %in (\ref{DEJ}). 
This explains why we need to split the range of $m$ into two cases: $m\leq 1$ and $m\geq 1$. 
However, 
these two bounds coincide when the $x$-intercept and the $y$-intercept are equal, i.e. $m=1$.
%are equal, which happens when the two estimates of (\ref{schur1}) are the same, i.e.  $\delta_1\sim \delta_2$. 
In the picture of the Newton polyhedron, this corresponds
to the supporting line of slope $-1$, which is  
given by the equation $p+q =$mult($P$) with $|y|\sim|x|$ as the associated region. 
 This illustrates how the relative multiplicity comes into play and 
 why the sharpness of $\Lambda_S$ is obtained when $|y|\sim|x|$.

Now we come to the sharpness of the trilinear form. Write $S$ as a sum of homogeneous polynomials: 
$$
S(x,y)=\sum_{n=n_0}^\infty S_n(x,y).
$$
Without loss of generality, we assume $n_0$ is the relative multiplicity of $S$, i.e. 
\begin{align}\label{assume}
\partial_x\partial_y(\partial_x-\partial_y)S_{n_0}\neq 0;
\end{align}
see Section \ref{MO13}. 

Let $f_1= f_2$ be characteristic functions 
of the interval $I_A=[-\lambda^{-1/n_0}/A,\lambda^{-1/n_0}/A]$, and $f_3$ 
be the characteristic function of $[-2\lambda^{-1/n_0}/A,2\lambda^{-1/n_0}/A]$, 
where 
  $A>0$ is a number independent of $\lambda$, such that 
$$
|\lambda S(x,y)|\leq 2^{-100}, \q\q {\rm for \,\, all} \q x,y\in I_A. 
$$
Then
\begin{align}
\left|\iint e^{i\lambda S(x,y)} a(x,y)f_1(x)f_2(y)f_3(x+y)dxdy\right| \sim |I_A|\times |I_A| \sim \lambda^{-\frac{2}{n_0}}. 
\end{align}
Notice
\begin{align}
\|f_1\|_2\sim \|f_2\|_2\sim \|f_3\|_2 \sim \lambda^{-\frac{1}{2n_0}}. 
\end{align}
Hence, if 
\begin{align}
\left|\iint e^{i\lambda S(x,y)} a(x,y)f_1(x)f_2(y)f_3(x+y)dxdy\right| \lesssim C(\lambda) \prod_{j=1}^3\|f_j\|_2
\end{align}
then $ C(\lambda) \gtrsim \lambda^{-\frac{1}{2n_0}}=\lambda^{-\frac{1}{2{\rm mult}_{\bsb\pi_0}(S)}}$ as desired.

\subsection{\large Methods}\q

Like Phong and Stein's proof of Theorem \ref{psmain}, the proof of Theorem \ref{main} requires elaborate analysis. 
There are two main ingredients in their proof:% of Theorem \ref{psmain}: 
 \begin{enumerate}[\q\q(1)]
 \item The operator version of the van der Corput Lemma \cite{PS94-1}; see Theorem \ref{ps} and 
 \item The Weierstrass Preparation Theorem (WPT) and the Puiseux Expansion.  
 \end{enumerate}
In order to extend Phong and Stein's framework
 to the trilinear setting, we first establish the trilinear analogue of (1):  %we shall find out some trilinear substitutions for the above ingredients:  
\begin{enumerate}[\q\q (1')]
\item Theorem \ref{local}: trilinear version of Phong--Stein's operator van der Corput Lemma.
\end{enumerate}
In addition, we develop 

\q\!\!(2') a self-contained algorithm of resolution of singularities in $\mathbb R^2$,
\\ which is our second main result. We also use the WPT in the proof of the algorithm, but its use is not essential. One can use the implicit function theorem instead. 
\begin{theorem}\label{raw}
For each analytic function $P$ defined in a neighborhood of $0\in\mathbb R^2$, there is an open set $U$ containing the origin, 
such that up to a set of measure zero, one can partition $U$ into a finite collection of regions $\{V_k\}_{1\leq k\leq K}$, such that $P$ behaves almost like a monomial in each $V_k$ in the following sense.
%Then there are two positive integers $K$ and $M$ such that $U\setminus \{0\}$ can be partitioned into $K$ regions $\{V_k\}_{1\leq k\leq K}$ 
There is an integer $M\in\mathbb N$, and for each $k$ there is a diffeomorphism 
\begin{align}
\rho^{}_k: V_k&\to \rho_k^{} (V_k)
\\
(x,y)&\mapsto (x_k,y_k)
\end{align}
satisfying the following properties:
\begin{align}\label{ccc1}
P(x,y) =P_k(x_k,y_k) =x_k^{p_k}y_k^{q_k} \cdot Q_k(x_k,y_k) \q\mbox{for all}\q (x,y)\in V_k,
\end{align} 
where
\begin{enumerate}[(i)]
\item $(x_k,y_k) = \rho_k^{}(x,y)$ is given by
%$\rho_k^{-1}(x_k,y_k)$ is given by 
\begin{align}\label{ccc}
\begin{cases}
x=x_k
\\
y=\gamma_k(|x_k|^{\frac{1}{M}})+|x_k|^{\frac{M_k}{M}}y_k,
\end{cases}
\end{align}
where $M_k\in\mathbb N$ and $\gamma_k$ is a polynomial,
 unless $P(x,\gamma_k(|x|^{\frac1M}))=0$, then $\gamma_k$ is a convergent power series.
%for $1\leq k\leq K$. 
\item $P_k= P\circ \rho^{-1}_{k}$ and $(p_k,q_k)$ is a vertex of the Newton polyhedron of $P_k$;
\item The function $Q_k$ is smooth and nonvanishing near $0$ in $\rho_k(V_k)$, i.e.
$$
\lim_{(x_k,y_k)\to (0,0)}Q_k(x_k,y_k)\neq 0 \,\,\mbox{inside}\,\, \rho_k^{} (V_k);
$$
\item $\rho_k(V_k)$ (as well as $V_k$ ) is a curved triangular region, whose upper and lower boundaries 
are given by $y_k= C_k |x_k|^{m_k}$ and $y_k = C'_k |x_k|^{m'_k}$, 
 for some $0\leq m_k, m_k'\leq \infty$ with $m_kM$, $m_k'M \in \mathbb N \cup \{\infty\}$, and for some constants $C_k$, $C_k'$. 
%\item $(p_k,q_k)$ is a vertex of the Newton polyhedron of $P_k$;
\end{enumerate}
Moreover, the constants $m_k$, $m_k'$, $(p_k,q_k)$, $M_k/M$ and the function $\gamma_k$\footnote{In the case $\gamma_k$ is an infinite series, 
we can compute any partial sum of $\gamma_k$.} can be computed explicitly via the Newton polyhedra of $\{P_k\}_{1\leq k\leq K}$. 
\end{theorem}
 \begin{remark} 
See Theorem \ref{rs} in Section \ref{res} for a complete version. %Furthermore, assuming we know all the solutions of $S(x,y)=0$ of the form $y=y({x}^{1/M})$, as well as their orders, then the algorithm can be implemented by a computer. 
 \end{remark}

%In the above theorem,  almost all the important information can be computed
%\footnote{If assume all the solutions of $P(x,y)=0$ of the form $y=y(|x|^{1/M})$ and their multiplicities are known, then one can implement the algorithm in a computer.}
%in an explicit manner; see Section \ref{res}.% in terms of Newton polyhedra. 
 
The idea of employing resolution of singularities to investigate oscillatory integrals appeared in Varchenko's work \cite{VAR76}, where deep results from 
Hironaka \cite{HI64} played a crucial role. 
In \cite{PS97}, to control the lower bound of $|\partial_x\partial_yS|$, one resolves the singularities of $\partial_x\partial_yS$ by means of Puiseux series expansions of roots.
More recently, a direct algorithm of resolution of singularities in $\mathbb R^2$ was introduced by Greenblatt \cite{GR04}, where an elegant proof of Theorem \ref{psmain} was presented based on this algorithm. 

Our proof of Theorem \ref{main} and the algorithm here are both influenced by \cite{GR04}, as well as its predecessor \cite{PS97}. 
Many of the ideas inside the algorithm here are very elementary and have even been known for centuries, 
which may come back to Newton's method, known as the Newton-Puiseux algorithm for solving $P(x,y)=0$ 
by a fractional power series $y=y(x^{\frac1M})$, i.e. the Puiseux series; %via an infinite sequence of changes of variables, 
see \cite{CUT04}. 
The philosophy of the algorithm here is similar to that of the one in \cite{GR04}: 
one wants to decompose some neighborhood of a singular point of a given function 
into a finite collection of `good' regions, 
in each of which the function behaves like a monomial.  
However, the algorithm here is distinct from the one in \cite{GR04} in many aspects.
One noticeable difference is that we do not need to employ the implicit function theorem, 
while it is a key for the termination of the algorithm in \cite{GR04}.  
We refer the reader to Section \ref{model} for the main ideas of the algorithm, comparisons to the one in \cite{GR04} 
and some examples that implement the algorithm.

\subsection*{ Concluding remarks}
It is possible to generalize the simply non-degenerate case, i.e. 
Theorem \ref{global70} to higher dimensions, 
however such generalization for the degenerate case, i.e. 
 Theorem \ref{main} is extremely challenging. 
 First, the equivalence between degeneracy and simple-degeneracy breaks down completely, 
 and indeed the class of non-degenerate functions is significantly 
 larger than that of simply non-degenerate functions. 
 Even for proving the existence of decays,
 the case for higher dimensions is substantially more complicated; see \cite{CHS11}. 
  Second, progress on degenerate oscillatory integrals in higher dimensions are slow and arduous,
  and even the higher dimension analogue of Theorem \ref{psmain} is still far from being fully understood; 
 see \cite{GMT07}. % for the case when the phase is a homogenous polynomial. 
  Perhaps a more approachable problem to consider is the $n$-linear analogue of Theorem \ref{main}. 
However, starting from $n=4$, there won't be any decay if one attempts to
bound the $n$-linear form by the product of functions in $L^2(\mathbb R)$. 
Thus, one needs to consider the functions in general $L^{p_j}(\mathbb R)$ space. 
As a starting point, one should % study optimal decay estimates for
%Another interesting problem but still quite challenging
 %study sharp estimates for
  %the norm of (\ref{lambda99}) as a tri-linear functional of $L^p\times L^q\times L^r$, i.e.
   address \textbf{Q2} above for (\ref{lambda99}) for general $p$, $q$ and $r$.
 Certain symmetry, which holds in the setting of $n$ functions with $n$ variables (see \cite{PSS01}),
  breaks down completely in this trilinear setting. Consequently, 
  finding optimal decays for all triples $(p,q,r)$ is substantially more complicated than that in \cite{PSS01};  see \cite{GX15} for recent progress. 
  Finally, 
  we refer the readers to \cite{X2015} for another interesting application of the resolution algorithm, 
  where we obtain a complete characterization for the $L^p\to L^p$ mapping properties 
  for one-dimensional oscillatory integral operators.

The organization of this paper is as follows. In Section \ref{gg}, 
we reduce the trilinear form $\Lambda_{S,\bsb\pi}(f_1,f_2,f_3)$ 
to the special case $\Lambda_S(f_1,f_2,f_3)$ as in (\ref{lambda99}) 
and then prove Theorem \ref{global70} and  Theorem \ref{local}. 
The latter one is the trilinear version of Phong--Stein's van der Corput Lemma. 
In Section \ref{model}, we outline some main ideas of the algorithm. 
Model examples are also provided to illustrate how the algorithm is implemented. 
Details for the proof of the algorithm will appear in Section \ref{res}.  
The method is purely analytic. 
In Section \ref{pmain}, we apply the algorithm and Theorem \ref{local}
 to prove Theorem \ref{main}.

\subsection*{Acknowledgements} 
The author wishes to thank his advisor Xiaochun Li for suggesting this problem. 
Without his valuable and insightful suggestions and enthusiastic guidance this work can not be done. 
The author would also like to thank Philip T. Gressman, the anonymous 
referees for many constructive comments, and 
Maxim Gilula for his help in mathematical writing.

%
%
%
%
%
%
%
%
%
%
%
%
%
%
%
%
%
%
%
%
%
%
%
%
%
%
%
%
%
%
%
%
%
%
%\include{preliminary}
%
%
%
%
%
%
%
%
%
%
%
%
%
%
%
%
%
%
%
%
%
%
%
%
%
%
%
%
%
%
%
%
%

%\section{Proof of Theorem \ref{global70} and Theorem \ref{local}}\label{gg}
\section{Preliminary}
\label{gg}

In this section, we employ the basic strategy from \cite{LI08} to prove Theorem \ref{global70} and its local analogue, Theorem \ref{local}.  
%following technical theorem which is needed in the proof of Theorem \ref{main}. 
At the end, we will prove Lemma \ref{schur}.

\begin{theorem}\label{local}
Assume  $a(x,y)$ is a smooth function supported in a strip of $x$-width no more than $\delta_1$ and $y$-width no more than
$\delta_2$, satisfying the following derivative conditions
\begin{align}\label{ps1}
|\partial_y a(x,y)| \lesssim \delta_2^{-1} \q\q\mbox{and} \q\q|\partial^2_y a(x,y)| \lesssim \delta_2^{-2}. 
\end{align}
Let $\mu>0$ and $S(x,y)$ be a smooth function s.t. for all $(x,y)\in {\rm Conv}(\supp(a)):$
\begin{align}\label{S1}
|D_{\bsb\pi_0^\perp}S(x,y)| \gtrsim \mu
\q\q\mbox{and}\q\q
 |\partial_{y}^{\beta}D_{\bsb\pi_0^\perp}S(x,y)| \lesssim \frac{\mu}{\delta_2^{\beta}} \q\mbox{for}\q \beta=1,2
\end{align}
then for $\Lambda_S$ defined as in (\ref{lambda99}), one has
$$
|\Lambda_S (f_1,f_2,f_3)| \lesssim |\lambda\mu|^{-\frac{1}{6}}\prod_{j=1}^3\|f_j\|_2.
$$ 
\end{theorem}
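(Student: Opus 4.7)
The plan is to apply the $TT^*$ method, reducing the trilinear form to a bilinear oscillatory integral to which Phong--Stein's operator van der Corput lemma (Theorem \ref{ps}) can be applied. First I would use Cauchy--Schwarz in $x$ to bound $|\Lambda_S|^2 \leq \|f_1\|_2^2 \cdot M$, where $M = \int \bigl|\int e^{i\lambda S(x,y)} f_2(y) f_3(x+y) a(x,y)\,dy\bigr|^2 dx$. Expanding the square and making the change of variables $s = y_2 - y_1$, $y = y_1$, $z = x + y_1$ rewrites this as
\[
M = \iiint e^{i\lambda \Phi_s(z,y)} F_s(y) G_s(z) \tilde a(z,y,s)\,dz\,dy\,ds,
\]
with $\Phi_s(z,y) = S(z-y,y) - S(z-y,y+s)$, $F_s(y) = f_2(y)\overline{f_2(y+s)}$, and $G_s(z) = f_3(z)\overline{f_3(z+s)}$, and $\tilde a(z,y,s) = a(z-y,y)\overline{a(z-y,y+s)}$.

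Next, a mean value computation in $s$ gives $\partial_y\partial_z\Phi_s(z,y) = -\sqrt{2}\,s\,D_{\bsb\pi_0^\perp} S(z-y,y) + O(s^2\mu/\delta_2)$, so hypothesis (\ref{S1}) yields $|\partial_y\partial_z\Phi_s| \gtrsim |s|\mu$ uniformly on the support of $\tilde a$; together with (\ref{ps1})--(\ref{S1}) this verifies the hypotheses of Theorem \ref{ps} for the inner bilinear form in $(y,z)$, with effective mixed-derivative lower bound $|s|\mu$. Applying Theorem \ref{ps} yields $|\Psi(s)| \lesssim (\lambda|s|\mu)^{-1/2}\|F_s\|_2\|G_s\|_2$. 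Combining this with the trivial estimate $|\Psi(s)| \lesssim (\delta_1\delta_2)^{1/2}\|F_s\|_2\|G_s\|_2$ and interpolating with exponent $\theta = 2/3$ produces the integrable bound
\[
|\Psi(s)| \lesssim (\lambda|s|\mu)^{-1/3}(\delta_1\delta_2)^{1/6}\|F_s\|_2\|G_s\|_2.
\]

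Finally I would integrate over $s$: a Cauchy--Schwarz in $s$ combined with the identities $\int \|F_s\|_2^2\,ds = \|f_2\|_2^4$ and $\int \|G_s\|_2^2\,ds = \|f_3\|_2^4$, together with a weighted Hardy--Littlewood--Sobolev estimate on the resulting integral $\int |s|^{-1/3}\|F_s\|_2\|G_s\|_2\,ds$ (using the effective support of $f_2, f_3$ in strips of widths $\delta_2, \delta_1+\delta_2$), should give $M \lesssim (\lambda\mu)^{-1/3}\|f_2\|_2^2\|f_3\|_2^2$ and hence $|\Lambda_S| \lesssim |\lambda\mu|^{-1/6}\prod_j\|f_j\|_2$. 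The hard part will be this last step: the weighted HLS naturally introduces $L^p$ norms of $f_2$ with $p > 2$, which when converted back to $L^2$ via the support constraint produce an extraneous factor of the form $(\delta_1/\delta_2)^{1/12}$. The remedy is to exploit the $x \leftrightarrow y$ symmetry of the setup: applying the initial Cauchy--Schwarz in $y$ instead of $x$ produces the analogous bound with the reciprocal factor $(\delta_2/\delta_1)^{1/12}$, and since $\min(A, 1/A) \leq 1$ for any $A > 0$, the better of the two bounds has no $\delta$-dependence, yielding the clean estimate $|\lambda\mu|^{-1/6}\prod \|f_j\|_2$.
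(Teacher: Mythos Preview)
Your $TT^*$ strategy is right, but the particular function you peel off via Cauchy--Schwarz is the wrong one, and this creates a genuine gap.  After your change of variables the phase is $\Phi_s(z,y)=S(z-y,y)-S(z-y,y+s)$ and the amplitude is $\tilde a(z,y,s)=a(z-y,y)\overline{a(z-y,y+s)}$.  To invoke Theorem~\ref{ps} for the $(z,y)$ bilinear form you need the upper bounds $|\partial_z\partial_y^{2}\Phi_s|\lesssim |s|\mu/\delta_2$ and $|\partial_y\tilde a|\lesssim\delta_2^{-1}$.  A direct computation gives
\[
\partial_z\partial_y^{2}\Phi_s \;=\; -s\,(\partial_x-\partial_y)^2\partial_x\partial_y S\big|_{(z-y,\,y+\theta s)}
\;=\; -s\bigl(\partial_x D_{\bsb\pi_0^\perp}S-\partial_y D_{\bsb\pi_0^\perp}S\bigr),
\]
and $\partial_y\tilde a$ contains $\partial_x a$.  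Neither $\partial_x D_{\bsb\pi_0^\perp}S$ nor $\partial_x a$ is controlled by (\ref{ps1})--(\ref{S1}), so the hypotheses of Theorem~\ref{ps} cannot be verified.  Your proposed rescue by $x\leftrightarrow y$ symmetry fails for the same reason: the assumptions of Theorem~\ref{local} are \emph{not} symmetric in $x$ and $y$ (only $\partial_y$-bounds are given), so running the argument with Cauchy--Schwarz in $y$ is not available.

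The remedy is to dualize against $f_3$ rather than $f_1$.  Writing $u=x+y$, Cauchy--Schwarz gives $|\Lambda_S|\le\|f_3\|_2\|B(f_1,f_2)\|_2$ with $B(f_1,f_2)(u)=\int e^{i\lambda S(u-v,v)}f_1(u-v)f_2(v)a(u-v,v)\,dv$.  Now $TT^*$ produces the shift $(x,y)\mapsto(x-\tau,y+\tau)$ along $\pi_3^\perp$, so that $S_\tau(x,y)=S(x,y)-S(x-\tau,y+\tau)$ and $a_\tau(x,y)=a(x,y)\overline{a(x-\tau,y+\tau)}$ live in the \emph{original} $(x,y)$ coordinates.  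By the mean value theorem along this direction,
\[
\partial_x\partial_y^{\beta}S_\tau \;=\; \tau\,(\partial_x-\partial_y)\partial_x\partial_y^{\beta}S\big|_{(x-\theta\tau,\,y+\theta\tau)}
\;=\; \tau\,\partial_y^{\beta-1}D_{\bsb\pi_0^\perp}S,
\]
so (\ref{S1}) yields exactly (\ref{ps2}) with $\mu$ replaced by $|\tau|\mu$; and $\partial_y a_\tau$ involves only $\partial_y a$, so (\ref{ps1}) transfers directly.  Finally, the $\tau$-integration needs no interpolation or HLS: split at $|\tau|=|\lambda\mu|^{-1/3}$, use the trivial bound $|\Psi(\tau)|\le\|F_\tau\|_1\|G_\tau\|_1\le\|f_1\|_2^2\|f_2\|_2^2$ for small $\tau$, and Theorem~\ref{ps} plus Cauchy--Schwarz (via $\int\|F_\tau\|_2^2\,d\tau=\|f_1\|_2^4$) for large $\tau$.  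Both pieces give $|\lambda\mu|^{-1/3}$ with no $\delta$-dependence.
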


The above theorem can be viewed as a trilinear analogue of Phong--Stein's operator version of van der Corput Lemma \cite{PS94-1}:
\begin{theorem}\label{ps}
Assume $a(x,y)$ is a smooth function supported in a strip of $x$-width no more than $\delta_1$ and $y$-width no more than
$\delta_2$, satisfying the same derivative conditions as in (\ref{ps1}). 
%\begin{align}
%|\partial_y a(x,y)| \lesssim \delta_2^{-1} \q\q\mbox{and} \q\q|\partial^2_y a(x,y)| \lesssim \delta_2^{-2}. \tag{\ref{ps1}}
%\end{align}
Suppose $\mu>0$ and $S(x,y)$ is a smooth function in $\mathbb R^2$ s.t. the following holds for all $(x,y)\in \supp(a)$:
\begin{align}\label{ps2}
&|\partial_x\partial_y S(x,y)| \gtrsim \mu\q\q\mbox{and}\q
 &|\partial_x\partial^{\beta+1}_y S(x,y)| \lesssim \frac{\mu}{\delta_2^\beta} \q\mbox{for} \q\beta=1,2
\end{align}
then, for $T$ defined as in (\ref{h3}),
$$
\|T(f)\|_2 \lesssim(\lambda \mu)^{-1/2}\|f\|_2.
$$
\end{theorem}

In both theorems above, we have adopted the notation $X\lesssim Y$ to denote $|X|\leq C Y$ where $C$ can depend on $a$ and $S$, but is independent of $\delta_1$, $\delta_2$, $\mu$ and $\lambda$. 
It's also worth pointing out that theorem \ref{ps} is not exactly the same as the one employed by Phong-Stein in \cite{PS94-1}, 
we have adopted a slightly more general version from Greenblatt \cite{GR04}. 
For the proof of Theorem \ref{ps}, we also refer the readers to \cite{GR04}.

Now we turn to the technical details in proving Theorem \ref{global70} and Theorem \ref{local}. 
First of all,  (\ref{lambda}) can be reduced to (\ref{lambda99}). 
Set 
\begin{align}\label{suppp}
\|\Lambda_{S,\bsb\pi}\| = \sup \{|\Lambda_{S,\bsb\pi}(f_1,f_2,f_3)|, \|f_j\|_2\leq 1\q\mbox{for} \q j=1,2,3\}
\end{align}
and $\|\Lambda_S\|$ is defined similarly. 
We may assume $\pi_1(x,y)=x$, $\pi_2(x,y)=y$ and $\pi_3(x,y)=Ax+By$ where $A\neq 0$ and $B\neq 0$. Change variables $u=Ax$ and $v=By$, then
\begin{align*}
\Lambda_{S,\bsb\pi}(f_1,f_2,f_3) 
&= \iint e^{i\lambda S(x,y)}f_1(x)f_2(y)f_3(Ax+By)dxdy
\\
&=\frac{1}{AB}\iint e^{i\lambda S(u/A,v/B)}f_1(u/A)f_2(v/B)f_3(u+v)dudv
\\
&=\frac{1}{AB}\iint e^{i\lambda S_{A,B}(u,v)}f_{1,A}(u)f_{2,B}(v)f_3(u+v)dudv
\end{align*}
where  $S_{A,B}(u,v)= S(u/A,v/B)$, $f_{1,A}(u)=f_1(u/A)$ and $f_{2,B}(v)=f_2(v/B)$. Notice that 
\begin{align*}
D_{\bsb\pi_0^\perp}S_{A,B}(u,v) = \frac{1}{AB}(({\partial_u}/{A}-{\partial_v}/{B})\partial_u\partial_v S)(u/A,v/B)
\end{align*}
Thus $D_{\bsb\pi_0^\perp}S_{A,B} =C D_{\bsb\pi^\perp} S$ for an appropriate constant $C$. In addition $\|f_{1,A}\|_2 =\sqrt A\|f_{1}\|_2$ and $\|f_{2,B}\|_2 =\sqrt B\|f_2\|_2 $. Finally, notice that the convexity is preserved under linear mappings. 
Therefore, for an appropriate constant $C$, one has
$$
\|\Lambda_{S,\bsb\pi}\| = C\|\Lambda_{S_{A,B}}\|.
$$

Now we turn to the proofs of Theorem \ref{global70} and Theorem \ref{local} and only need to consider $\Lambda_S$. 
For simplicity, we assume $\|f_1\|_2 =\|f_2\|_2=\|f_3\|_2 =1$. Applying change of variables $(u,v)=( x+y,y)$ and duality, one has
\begin{align}
\|\Lambda_S(f_1,f_2,f_3)\| \leq \|B(f_1,f_2)\|_2\|f_3\|_2 = \|B(f_1,f_2)\|_2,
\end{align}
where 
\begin{align}
B(f_1,f_2)(u) = \int e^{i\lambda S(u-v,v)} f_1(u-v) f_2(v) a(u-v, v) dv.
\end{align}
Employing the method of $TT^*$, one obtains
\begin{align}
\|B(f_1,f_2)\|_2^2 = \iiint e^{i(\lambda S(u-v_1,v_1)-\lambda S(u-v_2,v_2))}  f_1(u-v_1) \bar f_1(u-v_2) f_2(v_1) \bar f_2(v_2)
\\
a(u-v_1, v_1) a(u-v_2, v_2) dv_1dv_2du.
\end{align}
Change variables: $x=u-v_1$, $y=v_1$ and $\tau = v_2-v_1$ and set
\begin{align}
&S_\tau(x,y)  = S(x,y) -S(x-\tau,y+\tau)
\\
&F_\tau(x) = f_1(x)\bar f_1(x-\tau)
\\
&G_\tau(y) =f_2(y)\bar  f_2(y+\tau)
\\
&a_\tau(x,y) = a(x,y)a(x-\tau,y+\tau)\,.  \label{support}
\end{align}
This yields
\begin{align}\label{bound}
\|B(f_1,f_2)\|_2 ^2 = \iiint e^{i\lambda S_\tau(x,y)} F_\tau(x) G_\tau(y) a_\tau(x,y) dxdy d\tau\,.
\end{align}
The proofs of Theorem \ref{global70} and Theorem \ref{local} slightly diverge now and are presented in two separated subsections.

\subsection{Proof of Theorem \ref{global70}} Split $\| B(f_1,f_2)\|_2^2$ into $B_1+B_2$ according to the value of $|\tau|$ below
\\
$ \bullet $ Case 1. $ |\tau| \leq |\lambda|^{-1/3}$, 
\\
$\bullet $ Case 2. $ |\tau| \geq |\lambda|^{-1/3}$.
 \\
 In Case 1, we simply move the absolute value into the integrals, which yields 
\begin{align}
B_1 \leq \int_{|\tau|\leq |\lambda|^{-1/3} }\|F_\tau\|_1 \|G_\tau\|_1 d\tau \leq |\lambda|^{-1/3} \|f_1\|_2^2 \|f_2\|_2^2 =|\lambda|^{-1/3}.
\end{align}
In Case 2, in order to employ Theorem \ref{h0} to the inner double-integral, we assume for a moment in the support of $a_\tau$, the following holds for some positive constant $C$:
\begin{align}\label{to}
|\partial_x\partial_yS_\tau(x,y)| \geq C | \tau|.
\end{align}
By Theorem \ref{h0} and the Cauchy-Schwarz inequality, $B_2 $ is dominated by 
\begin{align}
&\int_{|\tau|\geq |\lambda|^{-1/3}} C |\lambda \tau|^{-1/2} \|F_\tau\|_2\|G_\tau\|_2 d\tau 
\\
\leq C&|\lambda|^{-1/3} \int \|F_\tau\|_2 \|G_\tau\|_2 d\tau 
\\
\leq C&|\lambda|^{-1/3} \bigg(\int \|F_\tau\|_2^2d \tau \cdot \int \|G_\tau\|_2^2 d\tau\bigg)^{1/2}  
\\
=C& |\lambda|^{-1/3} \|f\|_2^2 \|g\|_2^2 \label{end0} \leq C|\lambda|^{-1/3}  
\end{align}
Thus 
$$
\|B(f,g)\|_2^2 = B_1+B_2 \leq C|\lambda|^{-1/3}.
$$
It remains to verify (\ref{to}) on the support of (\ref{support}). Set
$$
F(t) = S_{xy} (x-t,y+t),
$$
then 
\begin{align}
|F'(t)| = |(\partial_x-\partial_y)\partial_x\partial_y S(x-t,y+t)|. \label{cc}
\end{align}
By the mean value theorem, there is a $t_0$ between $0$ and $\tau$, s.t.
\begin{align}\label{mean}
|\partial_x\partial_yS_\tau(x,y)|  = |F(0)-F(\tau)| = |\int_0^\tau F'(t)dt| = |\tau| |F'(t_0)|.
\end{align}
Notice that $(x,y)\in \supp (a)$ and $(x-\tau,y+\tau) \in \supp(a)$, then by convexity $(x-t_0,y+t_0) \in {\rm Conv}(\supp (a))$. 
Therefore,  (\ref{condition1}), (\ref{mean}) and (\ref{cc}) yield (\ref{to}).

\subsection{Proof of Theorem \ref{local}}Similarly, we split $\| B(f_1,f_2)\|_2^2$ into $B_1+B_2$ according to the value of $|\tau|$ as below
\\
$ \bullet $ Case 1. $ |\tau| \leq |\lambda\mu|^{-1/3}$, 
\\
$\bullet $ Case 2. $ |\tau| \geq |\lambda\mu|^{-1/3}$.
 \\
In Case 1, we simply move the absolute value into the integrals and thus 
\begin{align}
B_1 \leq \int_{|\tau|\leq |\lambda\mu|^{-1/3} }\|F_\tau\|_1 \|G_\tau\|_1 d\tau \leq |\lambda\mu|^{-1/3} \|f_1\|_2^2 \|f_2\|_2^2 =|\lambda\mu|^{-1/3}.
\end{align}
In Case 2, assume at a moment that (\ref{ps1}) is true for $a_\tau$ and (\ref{ps2}) are true for $S_\tau$ with $\mu$ replaced by $|\lambda \mu|$. Then Theorem \ref{ps} implies
\begin{align}
B_2 \leq C \int_{|\tau|\geq |\lambda \mu|^{-1/3}} |\lambda\mu\tau|^{-1/2} \|F_\tau\|_2\|G_\tau\|_2 d\tau  \leq C|\lambda\mu|^{-1/3}.
\end{align}
It remains to verify the conditions mentioned above. Indeed (\ref{ps1}) follows by $a_\tau(x,y) = a(x,y)a(x-\tau,y+\tau)$. $S_\tau$ satisfies the first part of (\ref{ps2}) with $\mu$ replaced by $|\lambda \mu|$ due to (\ref{S1}), (\ref{cc}), (\ref{mean}) and the convexity assumption in theorem \ref{local}. If we set 
$$
F_1(t) =\partial_x\partial_y^2S(x-t,y+t)
$$
and 
$$
F_2(t) = \partial_x\partial_y^3 S(x-t,y+t)
$$
then the second part of (\ref{ps2}) (with $\mu$ replaced by $|\lambda\mu|$) follows from (\ref{S1}), (\ref{cc}), (\ref{mean}) (with $F$ replaced by $F_1$ and $F_2$) and convexity. 

\subsection{Proof of Lemma \ref{schur}}

\begin{proof}
By the Cauchy-Schwarz Inequality, H\"older's inequality the Fubini Theorem, one has
\begin{align*}
&\left|\iint f_1(x)f_2(y)f_3(x+y)a(x,y)dxdy\right|
\\
=&\left|\int f_1(x) \left( \int f_2(y)f_3(x+y)a(x,y)dy\right)dx\right|
\\
\leq &  \bigg| \int \left( \int f_2(y)f_3(x+y)a(x,y)dy\right)^2dx\bigg|^{1/2} \cdot\|f_1\|_2 
\\
\leq & \left| \int  \left( \int |a(x,y)|^2dy\right)\left(\int \left| f_2(y)f_3(x+y)\right|^2dy\right)dx\right|^{1/2}\cdot\|f_1\|_2 
\\
\leq & C\delta_2^{1/2} \|f\|_1\|f_2\|_2\|f_3\|_2.
\end{align*}
The other bound can be obtained similarly. 
\end{proof}

 The estimates in both Lemma \ref{schur} and Lemma \ref{schur000} 
 are sharp, which can be seen by
taking $f_1 =\Id_{[0,\delta_1]}$, $f_2 =\Id_{[0,\delta_2]}$ and $f_3 =\Id_{[0,\delta_1+\delta_2]}$.

%----------------------------------------------------------------------------------------------------------------------------------------------------------
%
%
%
%
%
%
%
%
%
%
%
%
%
%
%
%
%
%
%
%\include{ideas}
%
%
%
%
%
%
%
%
%
%
%
%
%
%
%
%
%
%
%
%
%
%
%----------------------------------------------------------------------------------------------------------------------------------------------------------

\section{ideas and examples of the algorithm}
\label{model}
To employ Theorem \ref{local} to attack Theorem \ref{main}, 
one needs to decompose $\supp (a)$ into regions such that $P(x,y)$ is well-behaved, 
where $P=\partial_x\partial_y(\partial_x-\partial_y)S$.
 Ideally, one hopes $P(x,y)$ to behave like a monomial with a negligible error term. 
 The algorithm is driven by this idea. In each stage of iteration, 
 `good' regions (with the desired property) are obtained via vertices and edges of $\mathcal N(P)$ 
 when $P(x,y)$ is `nonvanishing', and `bad' regions are obtained when $P(x,y)$ `vanishes' on these edges. 
In each such `good' region, $P(x,y)$ behaves like a monomial and no further treatment is required. 
Each of those `bad' regions is then carried to the next stage of iteration. 
A branch of iterations is created for each `bad' region and one hopes
all the `bad' regions will eventually go away as the iterations go deeper.  
 To see how it works, drop all zero coefficients and write $P(x,y) $'s Taylor expansion as 
 \begin{align*}
 P(x,y) =\sum\limits_{p,q\in\mathbb N}c_{p,q}x^py^q 
 \end{align*}
The Newton diagram is the boundary of $\mathcal N(P)$, which consists of two non-compact edges, 
a finite collection of compact edges $\mathcal E(P)$ (may be empty) and a finite collection of vertices $\mathcal V(P)$. 
%The vertices and the edges are called the faces of the Newton polyhedron. 
%We use $\mathcal F(P)$ to denote all the faces, including non-compact faces. 
The Euler formula gives 
$\#\mathcal  V(P)-\#\mathcal E(P) = 1.$
For each face $F\in \mathcal F(P)$, define 
\begin{align}\label{edgeP}
P_F(x,y) = \sum\limits_{(p,q)\in F} c_{p,q} x^p y^q.
\end{align}
Choose one $L_m\in\mathcal {SL}(P)$ and consider the region $|y|\sim |x|^m$ associated to it. 
Notice that $L_m$ goes through at least one $V\in\mathcal V(P)$, say $V=(p_v,q_v)$.
%there exists at least one vertex $(p_v,q_v)=V\in\mathcal V(P) $, such that there exists a line containing $V$ with slope $-\frac 1 m$ not intersecting the interior of $\mathcal N(P)$. We %call such line a supporting line of $\mathcal N(P)$. 
Let $E_l$ and $E_r$ be the edges left to and right to $V$ with $\mathcal M(E_l)=m_l$ and $\mathcal M(E_r)=m_r$, respectively. 
Then $0\leq m_l\leq m \leq  m_r\leq \infty$. Consider the following three possible cases: 
\\
\textbf{Case (1).} $m_l<m<m_r$, 
\textbf{Case (2).} $m=m_l$ and 
\textbf{Case (3).} $m=m_r$, 
\\
which correspond to: 
\\
(1) the vertex $V$ `dominates'  $P(x,y)$,
 (2) the edge $E_l$ `dominates' $P(x,y)$ and (3) the edge $E_r$ `dominates' $P(x,y)$  respectively. \\
In \textbf{Case (1)}, $p_v+m{q_v} < p+mq$ for any other $(p,q)$ with $c_{p,q}\neq 0$. % with $(p,q)\neq (p_v,q_v)$. 
Then in the region $|y|\sim |x|^m$, 
$$
P(x,y)=P_V(x,y)+O(|x|^{p_v+mq_v+\nu}),\q{\rm for \,\,some} \,\, \nu>0.
$$ 
Given $|x|$ sufficiently small, $P_V(x,y)$ is the dominant term in $P(x,y)$, i.e.  
\begin{align}
P(x,y)\sim P_V(x,y)=c_{p_v,q_v}x^{p_v}y^{q_v}.
\end{align}
\begin{center}
\begin{tikzpicture}[scale=0.8]
\draw [<->,thick] (0,9) node (yaxis) [above] {$y$}
        |- (9,0) node (xaxis) [right] {$x$};
  \draw[help lines] (0,0) grid (8,8);
           %  \draw (0,0) --(9,9);
     \fill[gray!30] (2,8)--(2,4)--(3,2)--(5,1)--(8,1)--(8,8);
       \draw[thick] (5,0)--node[below left] {$l$}(0,5) ;
       \fill (5,1)   circle (2pt) (3,2) circle (2pt) (2,4)  circle (2pt);
        \draw [thick]	(9,1)--
        			(5,1) node[below left] {$V_3$}--
			(3,2)node[below left] {$V_2$}--
			(2,4)node[below left] {$V_1$}--
			(2,9);
	%\draw  (0,8)--(4,0);
	\node at (5,5) {Newton Polyhedron};
	\node at (2.7,4){(2,4)};
	\node at (3.5,2.3) {(3,2)};
	\node at (5,1.5) {(5,1)};
	\node at (11,6) {$P(x,y) = x^5y-x^3y^2+x^2y^4$};
	\node at (10.2,5) {$P_{V_2}(x,y) = -x^3y^2$};
	\node at (11,4) {$|x|^{2}\lesssim |y| \lesssim |x|^{1/2}$};
	\node at (7,9.5) { \emph{Case (1): The vertex} $V_2$\emph{ is dominant, where} $1/2< m < 2$};
	\node at (4.5,-1) {Figure 1.};
 % \draw (0cm,2mm) -- (30:2cm);
\end{tikzpicture}
\end{center}

\begin{center}
\begin{tikzpicture}[scale=0.8]
\draw [<->,thick] (0,9) node (yaxis) [above] {$y$}
        |- (9,0) node (xaxis) [right] {$x$};
  \draw[help lines] (0,0) grid (8,8);
           %  \draw (0,0) --(9,9);
     \fill[gray!30] (2,8)--(2,4)--(3,2)--(5,1)--(8,1)--(8,8);
     %  \draw[thick] (5,0)--node[below left] {$l$}(0,5) ;
       \fill (5,1)   circle (2pt) (3,2) circle (2pt) (2,4)  circle (2pt);
        \draw [thick]	(9,1)--
        			(5,1) node[below left] {$V_3$}--
			(3,2)node[below left] {$V_2$}--
			(2,4)node[below left] {$V_1$}--
			(2,9);
	%\draw  (0,8)--(4,0);
	\node at (5,5) {Newton Polyhedron};
	\node at (2.7,4){(2,4)};
	\node at (3.5,2.3) {(3,2)};
	\node at (5,1.5) {(5,1)};
	\node at (11,6) {$P(x,y) = x^5y-x^3y^2+x^2y^4$};
	\node at (11,5) {$P_{V_1V_2}(x,y) = -x^3y^2+x^2y^4$};
	\node at (12,4) {$|y|\sim |x|^{1/2}$};
	\draw[thick] (0,8)--(4,0);
	\node at (6,9.5) { \emph {Case(2): The edge} $V_1V_2$ \emph{ is dominant, where} $ m =1/ 2$};
 % \draw (0cm,2mm) -- (30:2cm);
\end{tikzpicture}
\end{center}

 \textbf{Case (2)}  and \textbf{Case (3)} are exactly the same, so we only discuss  
 \textbf{Case (2)} $m=m_l$ here. In addition, we focus on the right half plane $x>0$ and assume $y=r x^m$, where $r\in\mathbb R$ is a parameter. 
 Notice that $p_v+mq_v =p+mq$ for all $(p,q)\in E_l$ and $p_v+mq_v <p+mq$ for all $(p,q) \notin E_l$. Thus 
 \begin{align*}\begin{cases}
 P_{E_l}(x,rx^m)= P_{E_l}(1,r) x^{p_v+mq_v}
 \\
 P(x,y)=P_{E_l}(1,r) x^{p_v+mq_v}+O(x^{p_v+mq_v+\nu}),\q{\rm for \,\,some} \,\, \nu>0.
\end{cases} \end{align*}
As long as $r$ is away from the non-zero real roots of $P_{E_l}(1,r)$, the origin and the infinity,  $P_{E_l}(x,y)$
 is the dominant term of $P(x,y)$. We refer this as the case that the edge $E_l$ `dominates' $P(x,y)$. 
 
 What remains are the `bad' regions: when $r$ is in some neighborhood of the non-zero roots of $P_{E_l}(1,r)$,
which demands most of the work. The traditional way of further analyzing these `bad' regions would be to do a coordinate change. 
Choose a non-zero root $r_0$ of $P_{E_l}(1,r)$, set $(x_1, y_1)= (x, y -r_0 x^m)$ and consider the function $P(x,y)$ in the new coordinates, namely
 $P_1(x_1, y_1) = P(x_1, y_1+r_0 x_1^m)$. We can then apply the above arguments to the function $P_1(x_1, y_1)$, which will result in a further partition 
 of a `bad' region into `good' regions where vertices or edges dominate, and `bad' regions. Iterating such procedures would end up with an infinite collection of 
 `good' regions. As a by-product, one can find solutions for $P(x,\gamma(x))=0$, where each $\gamma (x)$ is a Puiseux series.  
 However, for analytic purpose it would be 
 of significant advantage and even crucial for the algorithm to converge. By saying an algorithm converges, 
 we always mean that it terminates after finitely many steps. Indeed, if the algorithm fails to converge,  
 \begin{enumerate}
 \item 
 we may fail to find a neighborhood for the desired partition. 
 Notice that in each stage of iteration, in order for the functions $P(x,y), P_1(x_1,y_1), \dots$ to behave like monomials, 
 we need the support of $|x|$, $|x_1|, \dots$  to be sufficiently small. 
 In the case the algorithm fails to converge, 
 it is unclear whether there is a uniform upper bound for $|x|=|x_1|=\dots$, that works for all stage of iterations. 
 Without such an upper bound, %the algorithm cannot be applied to in Theorem \ref{main} and Theorem \ref{psmain}, 
 one may not partition the support of the cut-off function $a(x,y)$ into `good' regions.  %in Theorem \ref{main} and Theorem \ref{psmain}. 
 However, this issue does not arise if the algorithm converges. 
% We do not need to worry about this problem, if the algorithm iterates only finitely many steps. 
 Another possible way to fix this issue is to investigate on how the upper bound of $|x|$ relies on the function $P(x,y)$. 
 This direction seems quite challenging but is of great interest, for it
 is related to the stability of oscillatory integrals/operators. 

\item
Even assuming we can find a neighborhood to do the partition, 
it is significantly simpler to work on a finite collection of `good' regions rather than an infinite collection. 
For the former, we essentially work in single `good' region,
since the desired estimates are allowed to rely on the cardinality of `good' regions, unless 
one seeks for a uniform bound for a class of phases.    
For the latter, 
one has to keep track of all constants appearing in all stages of iterations, and hope to be able to absolutely sum all the resulting estimates together.
Heuristically, a convergent algorithm allows us to divide a problem concerning 
an arbitrary real analytic function into finitely many subproblems, each of which is concerning a `monomial'. 
% and of same nature, and essentially one just needs to handle one subproblem.
 \end{enumerate}
Unfortunately, only performing the change of coordinates $(x_1, y_1)= (x, y -r_0 x^m)$ is not sufficient
 for the algorithm to converge. For example, if we apply the above algorithm to   
 the function 
$$
P(x,y) = \left(y-\Big(\sum_{j=0}^{\infty} r_jx^{j+1}\Big)\right)^n,
$$
we will end up with repeating the following change of variables  
$$
y_{j+1}= y_j -r_jx,\,\,\, {\rm for}\q j\geq 0,
$$
and the iterations will never stop. 
 
 How can one modify the above ideas to ensure the convergence of the algorithm?
 Greenblatt \cite{GR04} had some very nice observations in achieving this goal.
The key is to complexify the change of coordinates in each stage of iterations by means of the implicit function theorem (IFT). 
Roughly speaking, each such change of coordinates helps to decrease a certain `order' of $P(x,y)$ by at least 1,

To see how this works, choose a root  
$r_0$ of $P_{E_l}(1,r)$ and assume its order is $s_0$. Applying the IFT to $\partial_y^{s_0-1} P(x,y)=0$ yields that there is a unique 
$h(x)$ such that 
\begin{enumerate}
\item $\partial_y^{s_0-1} P(x,h(x))=0$ 
\item $h(x)$ is a real analytic function of $x^{\frac 1 {M}}$ 
whose leading term is given by $r_0x^{m}$, where $M$ is some positive integer.  
 \end{enumerate}
 Instead of doing coordinate change of the form $y_1 =y -r_0x^m$, one must do coordinate change of the form $y_1 = y - h(x)$. 
 Consequently,  each root $r_1$, an analogue of $r_0$ from the next stage of iteration, has order $s_1$ at most $s_0-1$. 
Then the algorithm must converge. 
 As a trade-off, the function $h(x)$ is given implicitly as a Puiseux series, which is 
 not easy to compute in general.  
 For instance, consider the function 
$$ 
P(x,y)=(y-x)^n+x^{n}y^{2n}.
$$
One can see that $\mathcal N(P)$ has only one compact edge $E$ and $P_E(1,r)$ has only one root 
$r_0$ of order $s_0 =n$. To apply the algorithm in \cite{GR04}, one first uses the IFT to solve $\partial_y^{n-1}P(x,y) = 0$, i.e. $y-x+cx^{n}y^{n+1}=0$ where $c=\frac{(2n)!}{n!(n+1)!}$.
The solution is then given by $y=h(x)= x+O(x^{1+\mu})$ for some $\mu>0$. Then change the coordinate $y=y_1+h(x_1)$ and plug it into $P(x,y)$. In general, to do iterations in later stages, one needs to compute the Puiseux expansion of $h(x)$ up to a certain number of terms. One then needs to run the Newton-Puiseux algorithm to do such computation.

Can one retain the simplicity and explicitness (avoid using the IFT) of the
change of variables/coordinates, and also ensure the convergence of the algorithm? 
 The answer is affirmative. To do so, we perform the change of variables in each stage of iterations as follows
 \begin{align}\label{cv}
(x,y)=(x_1,r_0x_1^{m}+y_1x_1^{m}).
\end{align} 
The $x_1^{m}$ term in front of $y_1$ plays the role in rescaling `curved' regions back into `uncurved' regions, 
allowing us to iterate in the same kind of regions.  
 %of the form: $U=(0,\epsilon)\times (-\epsilon, \epsilon)$.
Now we assume the algorithm does not stop, resulting in an infinite chain
\begin{align}\label{chain70}
[U,P]=[U_0,P_0]\to[U_1,P_1]\to[U_2,P_2]\to\dots\to [U_n,P_n]\to\cdots.
\end{align}
Each $U_n =(0,\epsilon)\times (-\epsilon, \epsilon)$ is the domain for $P_n(x_n,y_n)$, 
obtained inductively via change of variables of the form (\ref{cv}), for instance $P_1(x_1,y_1) = P(x_1, r_0x_1^{m}+x_1^{m}y_1)$. 
We then search for certain patterns inside $\mathcal N(P_n)$ as $n\to\infty$. 
Amazingly, the number of compact edges of $\mathcal N(P_n)$ converges to 1.
 More precisely, there exists an $n_0\in\mathbb N$ such that for $n\geq n_0$, 
 $\mathcal N(P_n)$ has only one compact edge $E_n$ and the restriction of $P_n$ to it is $c_n(y_n-r_nx_n^{m_n})^{s_{n_0}}$. 
 With this observation at hand, we can now turn off the iterations. 
  At the $n_0$-th stage of iteration, 
  instead of repeating the change of variables of the form (\ref{cv}), we do all of them together at a time
 $$
y_{n_0} =  y_{n_0+1}x_{n_0}^{m_{n_0}}+ \sum\limits_{k=n_0}^{\infty} r_kx_{n_0}^{m_{n_0}+m_{n_0+1}+\cdots+m_k},
 $$
then the algorithm stops immediately; see Lemma \ref{infty}. 

The benefits of the explicitness and simplicity of the change of variables 
can help keep very good track of variables in different stages. 
For any given $n$, one can write down the relation between $(x_n,y_n)$ and $(x,y)$ explicitly; see (\ref{change08}).  
One is also able to compute the Newton Polyhedron of $P_n$, 
assuming we know how to find roots of one-variable polynomials.
It is also quite convenient to estimate 
%upper bounds of the $p$-coordinate 
 %and $q$-coordinate of the far left vertex of $\mathcal N(P_n)$ inductively.  
 %Such upper bounds turn out to be useful to control the 
 lower bounds of $|P(x,y)|$ in `good' regions 
 from later stages of iterations inductively via these Newton Polyhedra; see (\ref{ind2}) and (\ref{key1}).

 \subsection{Two model examples}
 %In what follows, we provide two examples to illustrate the algorithm. 
 The function in the first example is quite simple, we aim to implement the algorithm in details. 
 In particular, we provide details to compute the size of the original domain, i.e. $\epsilon=\epsilon_0$ below. 
 However, we would recommend the readers to focus on the change of variables 
 rather than the values of $\epsilon_k$ and $\rho_k$ below. 
% The complexity of the second example 
 The second example is slightly more complicated,  with primary focus on 
 the convergence of the algorithm. 
 In these two examples, we only handle the right half plane $x>0$, 
for the other half can be handled similarly. 
% Let $\{\epsilon_k\}_{k\geq 0}$ be a strictly increasing sequence of small positive numbers. 
% Each $\epsilon_k$ which depends on $\epsilon_{k+1}$, will be the `size' of the domain in the $k$-th stage of iteration.  
\begin{example}\label{ex001}
 The first example is 
 $$
 P(x,y) =(y-x)(y-x-x^2) = (y^2-2xy+x^2) -x^2y +x^3,
 $$
 and we shall decompose the domain $U_0=U=(0,\epsilon)\times (-\epsilon, \epsilon)$ into a finite collection of `good' regions, where $\epsilon=\epsilon_0$ is sufficiently small that 
 will be determined at the end of the algorithm. The pair $[U_0,P_0]= [U,P]$ is the input and the algorithm runs as follows. 
 % At least in the first time of reading, 
%we recommend only to focus on the change of variables and consider all  
%the $\epsilon_k$'s and $\rho_k$'s as sufficiently small numbers.  
  \begin{enumerate}
\item [(0)]
Notice $\mathcal N(P)$ has two vertices $V_1=(2,0)$, $V_2=(0,2)$ and one compact edge $E=V_1V_2$, whose slope is $-1$. 
Let $\rho_0>0$ be sufficiently small, but much 
greater than $\epsilon_0$, say $\rho_0 >2^{10}\epsilon_0$. 
The polynomial $P_E(1,y)$ has only one root  $r_0=1$.  %whose order is $s_0=2$.  %Assume also $\rho\geq \epsilon_0$ and $0<x<\epsilon_1$. 
Then 
in the `good' region 
$$
U_{0,g,V_1} = \{(x,y)\in U: (1+\rho_0)x\leq y< \epsilon_0\}
$$
one has
$$
P(x,y)\sim_{\rho_0} y^2 =P_{V_1}(x,y)
$$ 
and in the `good' region 
$$
U_{0,g,V_2} = \{(x,y)\in U: -\epsilon_0<y\leq  (1-\rho_0)x\},
$$
one has
$$
P(x,y)\sim_{\rho_0} x^2 =P_{V_2}(x,y). 
$$ 
The remaining `bad' region is $ y=(1+y_1)x$, where $|y_1|<\rho_0$. 
%The $0$-th stage of iteration ends here. 
%To pass to the $1$-th stage of iteration, 
Set $x_1=x$, $y= x_1+x_1y_1$ and 
$$
P_1(x_1,y_1) = P(x_1, x_1(1+y_1)) = x_1^2 y_1^2 -x_1^3y_1. 
$$
Choose $\epsilon_1$ sufficiently small but much bigger than $\rho_0$, say $\epsilon_1>2^{10}\rho_0$,
and set $U_1=(0, \epsilon_1)\times (-\epsilon_1,\epsilon_1)$. 
\item 
Now we are in the $1$-st stage of iteration, whose input is $[U_1, P_1]$.  
Notice $\mathcal N(P_1)$ has two vertices $V_1'=(2,2)$, $V_2'=(3,1)$ and only one compact edge $E'=V_1'V_2'$. Let $\rho_1>2^{10} \epsilon_1$. 
 Similarly, in the region 
$x_1<\epsilon_1$, $(1+\rho_1) x_1\leq  y_1 <\epsilon_1$, 
$$
P_1(x_1,y_1) \sim_{\rho_1} y_1^2 x_1^2 =P_{1,V_1'}(x_1,y_1)
$$
and in the region $x_1<\epsilon_1$, $-\epsilon_1<y_1\leq  (1-\epsilon_1) x_1$ 
$$
P_1(x_1,y_1) \sim_{\rho_1} -y_1 x_1^3= P_{1,V_2'}(x_1,y_1).
$$
The `bad' region is $y_1 =(1+y_2)x_2$ with $|y_2|<\rho_1$. 
Change variables $x_2=x_1$, $y_1 =(1+y_2)x_2$ and set  
$$
P_2(x_2,y_2)  = P_1(x_2, (1+y_2)x_2) = x_2^3(1+y_2)y_2. %\sim x_2^3y_2,
$$
Choose $\epsilon_2>2^{10}\rho_1$ and set $U_2=(0, \epsilon_2)\times (-\epsilon_2,\epsilon_2)$.
\item
In the $2$-nd stage, the input is $[U_2,P_2]$. 
Now $\mathcal N(P_2)$ has only one vertex $V''= (3,1)$ and $P_2(x_2,y_2)\sim  x_2^3y_2$ given $\epsilon_2$ sufficiently small. 
The algorithm stops here. 
\end{enumerate}
One can now determine the value of $\epsilon_0$, whose choice
relies on
how small one wants the error term to be:
%For any pre given $N>0$, one can always find a $\epsilon(N)>0$ such that
by choosing $\epsilon_0 >0$ sufficiently small, we can let the ratio between the ``dominant term'' and the ``error term'' be sufficiently large. 
%(i.e. $x_2^3y_2^2$) relative to the determinant term ($x_2^3y_2$)
%It is also clear here how to choose the value of $\epsilon_2$, which depends how we want the error term 
In general, we need to compute $\{\epsilon_k\}$ and $\{\rho_k\}$ backward. In this case, one can safely take 
$2^{-10}>\epsilon_2>2^{10}\rho_1>2^{20}\epsilon_1>2^{30}\rho_0>2^{40}\epsilon_0$ and in particular any value of $\epsilon_0$ less than $2^{-50}$.

The corresponding partition of  
$U=(0,\epsilon)\times (-\epsilon, \epsilon)$ will be % into five `good' regions, in each of which $P(x,y)$ behaves like a monomial  
\begin{align*}
\begin{cases}
U_{0,g,V_1} = \{(x,y)\in U: y\geq (1+\rho_0 )x\}, \q &where \q P(x,y) \sim y^2 ,
\\
U_{0,g,V_2} = \{(x,y)\in U: y\leq (1-\rho_0 )x\}, \q &where\q P(x,y) \sim x^2 ,
\\
U_{1,g, V_1'} = \{(x,y)\in U: y_1\geq (1+\rho_1 )x_1\}, \q &where \q P(x,y) \sim x_1^2y_1^2,
\\
U_{1,g, V_2'} =\{(x,y)\in U: y_1\leq (1-\rho_1 )x_1\}, \q &where\q P(x,y) \sim -x_1^3y_1,
\\
U_{2,g} = \{(x,y)\in U: -\rho_1< y_2<\rho_1\}, \q &where\q P(x,y) \sim x_2^3y_2 ,
\end{cases}
\end{align*}
where 
\begin{align*}
\begin{cases}
x=x_1=x_2
\\
y=x+xy_1= x+x^2 +x^2 y_2.
\end{cases}
\end{align*}

To help the readers better understand how the two algorithms work differently, 
we also implement the one in \cite{GR04} for this example.

First, notice $s_0=2$, we then need to 
solve $\partial_y P(x,y) =0$. This gives $y= x+ x^2/2$.  Set $y= x+x^2/ 2+y_1$, then 
$$
\tilde P_1(x,y_1) = P(x, x+x^2 /2+y_1) = (y_1-x^2/ 2)(y_1+x^2/ 2).
$$
The only compact edge of $\mathcal N(\tilde P_1)$ has two roots, $r_1=1/2$ and $r_1' =-1/2$. 
Set $y_1 = x^2 /2 +y_2$ and assume $|y_2|<x^2/2$, then   
$$
\tilde P_2(x,y_2) =\tilde P_1(x,x^2/2+y_2) = y_2 (y_2+x^2)\sim x^2y_2.
$$
Similarly set $y_1 = -x^2 /2 +y_2'$ and assume $|y_2'|<x^2 /2$, one has 
$$
\tilde P_2'(x,y_2') =\tilde P_1(x,-x^2/2+y_2') = y_2' (y_2'-x^2)\sim - x^2y_2'.  
$$
The algorithm stops here. 
\end{example}

\begin{example}
 Since the major focus is the termination of the algorithm, we shall be brief and not worry about technical details.  
Consider
$$
P(x,y)=xy(y^2-x)(y-x^2)(y-\sum_{n=1}^\infty x^n)^2. 
$$
In the $0$-th stage of iteration, the `bad' regions are the ones near the curves $y^2-x=0$, $y-x^2=0$ and $y-x=0$. 
Thus, away from the neighborhoods of the following curves:
$y=-\sqrt x, y=\sqrt x, y=x^2$ and $y=x$, the function $P(x,y)$ behaves like a monomial. 
Each of the `bad' regions near $y=-\sqrt x$,  $y=\sqrt x$  and $y=x^2$ will go away via one step of change of variables 
-- all the resulting new Newton polyhedra have only one vertex; see Example \ref{EX08} in Section \ref{res}.

To address the `bad' region near $y=x$,
we do change of variables $x=x_1$ and $y=x_1+x_1y_1$, which yields 
$$
P_1(x_1,y_1) = P(x_1,x_1+x_1y_1) = Q_1(x_1,y_1) x_1^6\left(y_1-\sum_{n=1}^\infty x_1^n\right)^2
$$
where 
$$
Q_1(x_1,y_1) =  (1+y_1)(1+y_1-x_1)(-1+x_1(1+y_1)^2).
$$
One can ignore the function $Q_1(x_1,y_1)$ since it is non-vanishing near the origin. 
Doing change of variables of the form $y_{k-1}=x_k+x_ky_k$ does not help to resolve the singularities, 
and the Newton polyhedron of $P_{k-1}$ only shifts to the right by 2 units. But we want the Newton polyhedron to end up
 with only one vertex, which corresponds to the case where a single vertex is dominant. 
This indeed can be done as follows. Set 
\begin{align*}
\begin{cases}
x_1 =x_2
\\
y_1 =x_2y_2 + \sum_{n=1}^\infty x_2^n
\end{cases}
\end{align*}
then 
$$
P_2(x_2,y_2) =P_1(x_2,  x_2y_2 + \sum_{n=1}^\infty x_2^n) =Q_2(x_2,y_2) x_2^8y_2^2.
$$
The function $Q_2(x_2,y_2)$ is non-vanishing
and $P_2(x_2,y_2) $ behaves like the monomial $ x_2^8y_2^2$.
 The algorithm stops here.  
\end{example}

%--------------------------------------------------------------------------------------------------------------------------------------------------------------------------------
%
%
%
%
%
%
%
%
%
%
%
%
%
%
%
%
%
%
%
%
%
%
%
%\include{algorithm2}
%
%
%
%
%
%
%
%
%
%
%
%
%
%
%
%
%
%
%
%
%
%
%
%
%
%
%
%
%
%
%--------------------------------------------------------------------------------------------------------------------------------------------------------------------------------

\section{An algorithm for resolution of Singularities in $\mathbb R^2$}
\label{res}

Let $U$ be a sufficiently small neighborhood of $0$ in $\mathbb R^2$. For simplicity, we restrict our discussion to the right half-plane $x>0$, since the left half plane can be reduced to this case through the change of variables $(x,y)\to (- x, y)$ and the $y$-axis can be ignored first. 
For the rest of this section, we take $U=\{(x,y): 0<x<\epsilon, -\epsilon <y<\epsilon\}$ and choose $\epsilon $ to be sufficiently small. 
The exact value of $\epsilon$ depends on later stages of iterations and 
will be clear at the end of the algorithm. 
An inductive resolution procedure will be performed on the pair $[U,P]$, where $P$ is any analytic function defined in an open neighborhood containing $U$. 
Let $M\in\mathbb N$ be a large integer.  
\begin{definition}\label{dd1}
A region $W$ is standard if 
 $$
 W=\{(X,Y): 0<X<\epsilon, \, -\epsilon<Y<\epsilon\}
 $$
 for some $\epsilon>0$, which is referred to as the size of $W$. 
 A standard pair $[W,Q]$ is a standard region $W$ 
 together with a power series $Q(X,Y)$ of $(X^\frac{1}{M},Y)$, 
 which converges absolutely in some neighborhood of $W$. 
%Given a coordinate $(X,Y)$ and a real analytic function $Q(X,Y)$ of $(X^\frac{1}{M},Y)$, if $W=\{(X,Y): 0<X<\epsilon, \, -\epsilon<Y<\epsilon\}$, where $\epsilon>0$ is sufficiently small, then we call $W$ a standard region and $[W,Q]$ is a standard pair under the coordinate $(X,Y)$. In addition, we denote ${\rm diam}(W) = \epsilon$, the `diameter' of $W$. 
\end{definition}
The iterations will always be performed on standard pairs, with the regions and functions of $(x^{1/M},y)$ varying. 
%Moreover, $\epsilon>0$ denotes a sufficiently small number whose value may be varied in different stages of iterations; see Example \ref{ex001}. 
 \subsection{\large Part I: Single step of Partition}    \q 
Set $[U_0,P_0]= [U,P]$, $\epsilon_0 =\epsilon$ and $(x_0,y_0)=(x,y) $ to indicate the procedure is in the $0$-th stage. 
Consider 
\begin{align*}
P_E(x,y) = \sum\limits_{(p,q)\in E} c_{p,q} x^p y^q,\q\mbox{for}\q E\in \mathcal E(P).
\end{align*}

Let $V_{E,l}=(p_{E,l},q_{E,l})$ and $V_{E,r}=(p_{E,r},q_{E,r}) \in \mathcal V(P)$ be the left and right vertices of $E$. Set $m_E= \frac{p_{E,l}-p_{E,r}}{q_{E,r}-q_{E,l}}$, then the slope of $E$ is $-1/m_{E}$. The constant $m_E$ is the most important constant associated to each edge $E$. 
Set $e_E = p_{E,l}+m_Eq_{E,l}$, then $e_E = p'+m_Eq'$ for all $(p',q') \in E$ and there is a $\nu>0$ such that $p''+m_Eq''\geq e_E+\nu$ for all $(p'',q'')\notin E$. 
On the curve $y=rx^{m_E}$, where $r\in \mathbb R\setminus \{0\}$,
$$
P_E(x,y) = x^{e_E}\sum\limits_{(p,q)\in E}c_{p,q}r^q=:x^{e_E}P_E(r)
$$
and 
$$
P(x,y)-P_E(x,y) = O(x^{e_E+\nu})
$$
has a higher degree. Thus, given $|x|$ sufficiently small, $P_E(x,y)$ dominates $P(x,y)$, unless 
$$
P_E(r)=\sum\limits_{(p,q)\in E}c_{p,q}r^q\to 0.
$$ 
Assume $\{r_{E,j}\}_{1\leq j\leq J_E}$, labeled in an increasing order, % and $\{r_{E,j}\}_{-J_E'\leq j\leq -1}$ are
is the set of non-zero roots of $P_E(r)=0$ of orders $\{s_{E,j}\}_{1\leq j\leq J_E}$, respectively.  % and $\{s_{E,j}\}_{-J_E'\leq j\leq -1}$ respectively. 
Then
\begin{align}
J_E\leq \sum\limits_{1\leq j\leq J_E} s_j \leq q_{E,l}-q_{E,r},
\end{align}
since $P_E(r)  = r^{q_{E,r}}\sum\limits_{(p,q)\in E}c_{p,q}r^{q-q_{E,r}}$. For simplicity, we say $r_{E,j}$ is a root of $E$ instead of saying $r_{E,j}$ is a root of $P_E(r)$. 
Assign two constants $c_E$ and $C_E$ to each edge $E$. They are independent from $\epsilon_0$ and satisfy
\begin{align}\label{chosen35}
0<c_{E} <2^{-10} |r_{E,j}| <2^{10}|r_{E,j}|<C_E, \q\mbox{for all}\q 1\leq j\leq J_E. 
\end{align}
The numbers $2^{-10}$ and $2^{10}$ above (same as below)
are just a random choice for small and large numbers.  
Let $I_{j}^{\rho_0}(E) = (r_{E,j}-\rho_0, r_{E,j}+\rho_0)$, where $\rho_0 >0$ is much greater than $\epsilon_0$, 
but much smaller than any $c_E$ (and hence any $r_{E,j}$), and   
\begin{align}\label{rho00}
0<\rho_0 < 2^{-10} \cdot \min_{E} \{c_{E}\}
\end{align}
is to be determined at the end of iterations. 

Set 
\begin{align}\label{domain}
\begin{cases}
&I(E)=[c_E,C_E]\cup [-C_E,-c_E],
\\
\\
&I_b(E) = \cup_{1\leq j \leq J_E} I^{\rho_0}_{j}(E),
\\
\\
&I_g(E) = I(E)\setminus I_{b}(E).
\end{cases}
\end{align}
Here $I_b(E)$ are the neighborhood of the roots $\{r_{E,j}\}$ and $I_g(E)$ are the points away from the non-zero roots, 0 and $\infty$. Then
\begin{align}\label{lb}
|P_E(r)| \gtrsim_{\rho_0} 1 \q\q \mbox{for}\q r\in I_g(E).
\end{align}
Thus $P_E(x,y) $ dominates $P(x,y)$ if $y=rx^{m_E}$ and $r\in I_g(E)$. Let %For $1 \leq j\leq J_E-1$, set
\begin{align}
U_{0,g}(E) = \{(x,y)\in U_0: y= rx^{m_E},\, r\in I_g(E)\}
\end{align}
be the `good' regions generated by the edge $E$, which is a disjoint union of $(J_E+2)$ `good' regions: $U_{0,g}(E,j)$.  
Each `good' region $U_{0,g}(E,j)$ is a curved triangular region of the form 
\begin{align}\label{goodregion}
U_{0,g}(E,j) =\{(x,y)\in U_0: b_j x^{m_E}\leq y \leq B_j x^{m_E}\},
\end{align}
where $[b_j,B_j]:=I_g(E,j)$ is just a connected component of $I_g(E)$ and $(J_E+2)$ comes from the number of connected
components of $I_g(E)$. 
\\

In the above notation, the subindex $0$ in $U_{0,g}(E,j)$ indicates the algorithm is in the $0$-th stage, $g$ indicates the region is `good'. The `bad' regions are defined as:
\begin{align}
\label{badregion}
U_{0,b}(E,j) &= \{(x,y) \in U_0:  y = r x^{m_E},\, r\in I^\epsilon_{j}(E)\}
\\
&=\{(x,y) \in U_0: (r_j-\rho_0) x^{m_E} <y < (r_j+\rho_0) x^{m_E}\}, 
\end{align}
for $1\leq j\leq J_E$. 
If $\{r_{E,j}\}$ is empty, then there is no `bad' region generated by this edge and the only two `good' regions are
\begin{align}
U_{0,g}(E) = \{(x,y)\in U_0: c_E x^{m_E}< |y| < C_E x^{m_E}\}.
\end{align}

The following lemma states that $P$ behaves almost like a monomial in each `good' region $U_{0,g}(E,j)$.
\begin{lemma}\label{edge}
Let $N>0$ and $L>0$ be given. For any choices of $c_E$'s, $C_E$'s and $\rho_0$ above, 
one can choose $\epsilon_0$ sufficiently small, such that for all $E\in \mathcal N(P)$ and $(x,y) \in U^{}_{0,g}(E,j)$, one has
\begin{align}
|x^{p_{E,l}}y^{q_{E,l}}| \sim_{\rho_0} |P_E(x,y)| \geq 2^N |P(x,y) - P_E(x,y)| \label{zz}.
\end{align}
Here $(p_{E,l},q_{E,l})$ is the left vertex of the edge $E$. In addition, 
\begin{align}
|\partial_x^{\alpha}\partial_y^\beta P(x,y)| < C\min\{ 1, \,|x^{p_{E,l}-\alpha}y^{q_{E,l}-\beta}| \}\label{yy}
\end{align}
for $0\leq \alpha, \beta\leq L$. 
\end{lemma}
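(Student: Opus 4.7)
The plan is to parametrize $U_{0,g}(E,j)$ by $r=y/x^{m_E}$, which lies in the finite union $I_g(E,j)\subset[c_E,C_E]\cup[-C_E,-c_E]$, bounded away from every nonzero root of $P_E$. The defining balance $p+m_Eq=e_E$ on $E$ and the strict inequality $p+m_Eq\geq e_E+\nu_0$ off $E$ (where $\nu_0>0$ exists because only finitely many lattice points in $\supp(P)$ lie below the line $p+m_E q=e_E+1$) give, after the substitution $y=rx^{m_E}$,
\begin{align*}
P_E(x,y)=x^{e_E}P_E(r), \qquad x^{p_{E,l}}y^{q_{E,l}}=r^{q_{E,l}}x^{e_E}.
\end{align*}
By (\ref{lb}), the trivial upper bound on $|P_E(r)|$, and $|r|\sim 1$, the first comparison $|x^{p_{E,l}}y^{q_{E,l}}|\sim|P_E(x,y)|\sim x^{e_E}$ in (\ref{zz}) is immediate.

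For the remainder, I write
\begin{align*}
P(x,y)-P_E(x,y)=\sum_{(p,q)\in\supp(P)\setminus E}c_{p,q}\,r^q x^{p+m_Eq}=x^{e_E+\nu_0}R(x,r),
\end{align*}
with $|R(x,r)|\leq\sum|c_{p,q}|C_E^q|x|^{(p+m_Eq)-(e_E+\nu_0)}$. Since each exponent is nonnegative and $|x|<1$, every factor $|x|^{(p+m_Eq)-(e_E+\nu_0)}\leq 1$; and because $P$ is analytic while the region satisfies $|x|<\epsilon$ and $|y|\leq C_E\epsilon^{m_E}$, which can be shrunk to lie inside the bidisk of convergence of $P$, the series $\sum|c_{p,q}||x|^p(C_E|x|^{m_E})^q$ is uniformly bounded by a constant $M(P,C_E)$, hence so is $R(x,r)$. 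Combining with the lower bound $|P_E|\gtrsim x^{e_E}$ yields $|P-P_E|\leq C|x|^{\nu_0}|P_E|$, and choosing $\epsilon$ small enough that $C\epsilon^{\nu_0}<2^{-N}$ establishes (\ref{zz}).

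For the derivative bound (\ref{yy}), I apply $\partial_x^\alpha\partial_y^\beta$ termwise, observing that $\partial_x^\alpha\partial_y^\beta(c_{p,q}x^py^q)$ on $y=rx^{m_E}$ is a constant multiple of $r^{q-\beta}x^{e_E-\alpha-m_E\beta+(p+m_Eq-e_E)}$ (and vanishes when $p<\alpha$ or $q<\beta$). The finitely many on-edge terms contribute a bounded sum multiplied by $x^{e_E-\alpha-m_E\beta}$, while the off-edge tail produces, by the $R$-type argument, a higher power $x^{e_E+\nu_0-\alpha-m_E\beta}$, which is smaller for $|x|<1$. Therefore $|\partial_x^\alpha\partial_y^\beta P(x,y)|\leq C\,x^{e_E-\alpha-m_E\beta}$; combining with $|x^{p_{E,l}-\alpha}y^{q_{E,l}-\beta}|=|r|^{q_{E,l}-\beta}x^{e_E-\alpha-m_E\beta}$ and the fact that $|r|^{q_{E,l}-\beta}$ is bounded below by a positive constant (since $|r|\in[c_E,C_E]$), we obtain $|\partial_x^\alpha\partial_y^\beta P|\leq C|x^{p_{E,l}-\alpha}y^{q_{E,l}-\beta}|$. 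The competing bound $|\partial_x^\alpha\partial_y^\beta P|\leq C$ comes from smoothness of $P$ on the compact closure $\overline U$, and taking the minimum of the two bounds yields (\ref{yy}).

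The main technical obstacle is the uniform boundedness of the remainder $R(x,r)$: one must verify that after fixing $c_E$ and $C_E$ the thresholds on $|x|$ and $\epsilon$ can be arranged so that $(|x|,C_E|x|^{m_E})$ lies inside the bidisk of convergence of $P$. This is exactly the freedom already built into the algorithm to shrink $\epsilon$ after selecting $c_E$ and $C_E$. Every other constant depends only on $P$, $E$, $N$, $L$, and $\alpha,\beta\leq L$, so taking the maximum over the finite collection $\mathcal E(P)$ and over $0\leq\alpha,\beta\leq L$ produces the uniform estimates asserted in the lemma.
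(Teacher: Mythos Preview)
Your proof is correct and follows essentially the same approach as the paper: substitute $y=rx^{m_E}$, use the edge decomposition $P_E(x,y)=x^{e_E}P_E(r)$ together with the lower bound (\ref{lb}) on $|P_E(r)|$ for $r\in I_g(E)$, and exploit the positive gap $\nu_0$ between $e_E$ and the off-edge exponents $p+m_Eq$ to absorb the remainder into a higher power of $x$. Your treatment is slightly more detailed than the paper's (you justify explicitly why $\nu_0>0$ and why the remainder series converges), but the argument is the same.
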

\begin{proof}
In the region $y=r x^{m_E}$ where $r\in I(E)$, 
$$
|P(x,y) - P_E(x,y)| <C x^{e_E+\nu},
$$
 where $\nu$ is a positive fraction (can be computed but not necessary).  
By (\ref{lb}), one has $|P_E(r)| \geq C$ for $r\in I_g(E)$, where $C =C(c_E,C_E,\rho_0, P)>0$.
Thus if $\epsilon_0 =\epsilon_0(\rho_0,\nu,C)$ is sufficiently small, then for all $(x,y)\in U_{0,g}(E,j)$ we have
$$
|P_E(x,y)| \sim_{\rho_0} |x^{p_{E,l}}y^{q_{E,l}}| \sim x^{e_{E}}> 2^N\cdot O(x^{e_E+\nu})>2^N|P(x,y)-P_E(x,y)|,
$$
%\sim x^{e_{E}+\nu} $$ 
which proves (\ref{zz}).

Now we turn to (\ref{yy}). The bound $|\partial_x^{\alpha}\partial_y^\beta P(x,y)|\lesssim 1$ is trivial. 
In the region $y=r x^{m_E}$ where $r\in I(E)$, for $0\leq \alpha,\beta\leq L$ and every $(p'',q'')\in E$, one has 
$$
|x^{p_{E,l}-\alpha}y^{q_{E,l}-\beta}| \sim |x|^{p''-\alpha}|y|^{q''-\beta}\sim |x|^{e_E-\alpha-m_E\beta},
$$
even for $p''-\alpha<0$ or/and $q''-\beta<0$. Notice $|y|\sim |x|^{m_E}$, then 
$$
|\partial_x^\alpha\partial_y^\beta (P(x,y)-P_E(x,y))|\lesssim |x|^{e_E+\nu-\alpha-m_E\beta}.
$$
Thus given $|x|$ sufficiently small, one has 
$$
|\partial_{x}^{\alpha}\partial_y^{\beta}P(x,y)| \lesssim |x^{p_{E,l}-\alpha}y^{q_{E,l}-\beta}|.
$$
This completes the the proof of (\ref{yy}).
\end{proof}
The above lemma handled the case when an edge $E$ is dominant. 
Another easy case is when a vertex $V=(p_v,q_v)$ plays the dominant role. 
In this case, let $E_l$ and $E_r$ be the edges left and right to $V$, with slopes $-1/m_{E_l}$ and $-1/m_{E_r}$  respectively. 
Then $0\leq m_{E_l }< m_{E_r}\leq \infty$. 
%Here $m_{E_l}=0$ indicates that $E_l$ is the vertical non-compact edge and $E_r$ is the horizontal non-compact edge if $m_{E_r}=\infty$. 
Consider the following region
\begin{align}\label{corner00}
U_{0,g}(V) =\{(x,y)\in U_0: C_{E_r}x^{m_{E_r}}< |y|<  c_{E_l} x^{m_{E_l}}\},
\end{align}
where $C_{E_r}$ and $c_{E_l}$ are the constants chosen in (\ref{chosen35}). 
One can always choose $\epsilon_0$ small enough 
such that the two curves $y= C_{E_r}x^{m_{E_r}}$ and $y=c_{E,l}x^{m_{E_l}}$ do not meet inside $U_0$. 
In the case $m_{E_r}=\infty$, i.e. $V$ is the rightmost vertex, 
 set 
\begin{align}\label{corner001}
U_{0,g}(V) =\{(x,y)\in U_0:  |y|<  c_{E_l} x^{m_{E_l}}\},
\end{align}
which also contains the portion of the $x$-axis inside $U_0$.
Similarly, when $m_{E_l}=0$, i.e.  $V$ is the leftmost vertex,
we replace $c_{E_l} x^{m_{E_l}}$ by $\epsilon_0$ in (\ref{corner00}).

The following lemma is the vertex analogue of Lemma \ref{edge}, whose proof is similar to that of Lemma \ref{edge}. 
\begin{lemma}\label{vertex}
Given preselected numbers $N$ and $L$, one can choose $\epsilon_0$ sufficiently small 
$($depending on $N$, $L$, $C_{E}$'s, $c_{E}$'s and $P$$)$,
 such that for $(x,y) \in U_{0,g}(V)$ one has
\begin{align}
|x^{p_{v}}y^{q_{v}}| \sim  |P_V(x,y)| \geq 2^N |P(x,y) - P_V(x,y)| \label{zzz}
\end{align}
and 
\begin{align}
|\partial_x^{\alpha}\partial_y^\beta P(x,y)| < C \min\{1, |x^{p_{v}-\alpha}y^{q_{v}-\beta}| \}\label{yyy}
\end{align}
for $0\leq \alpha, \beta\leq L$. 
\end{lemma}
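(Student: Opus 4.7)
The plan is to follow the strategy of Lemma \ref{edge} verbatim, replacing the dominant edge $E$ by the dominant vertex $V = (p_v,q_v)$. The first inequality in (\ref{zzz}) is immediate from $|P_V(x,y)| = |c_{p_v,q_v}|\,|x^{p_v}y^{q_v}|$, so the heart of the matter is to control $|P(x,y) - P_V(x,y)|$, i.e., the sum $\sum_{(p,q) \neq (p_v,q_v)} c_{p,q}\,x^p y^q$ on the region $U_{0,g}(V)$.

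First I would establish the pointwise monomial comparison: for each $(p,q) \in \mathcal N(P) \setminus \{V\}$ with $c_{p,q} \neq 0$, the ratio $|x^p y^q|/|x^{p_v}y^{q_v}|$ is uniformly small on $U_{0,g}(V)$. I would split into three cases depending on where $(p,q)$ sits relative to $V$. If $(p,q)\in E_l$, the slope relation gives $p-p_v = -m_{E_l}(q-q_v)$ with $q-q_v \geq 1$, so
\[
\frac{|x^p y^q|}{|x^{p_v} y^{q_v}|} = \left(\frac{|y|}{|x|^{m_{E_l}}}\right)^{q-q_v} \leq c_{E_l}^{q-q_v} \leq c_{E_l}.
\]
Analogously, if $(p,q) \in E_r$, then the same manipulation yields $(|x|^{m_{E_r}}/|y|)^{q_v-q} \leq C_{E_r}^{-1}$. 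Both bounds can be made as small as needed by the choice of $c_{E_l}$ small and $C_{E_r}$ large. For the remaining lattice points with $c_{p,q} \neq 0$ (interior of $\mathcal N(P)$ or on a non-compact edge), I would pick any $m$ strictly between $m_{E_l}$ and $m_{E_r}$ and use the defining property of a vertex: $p + mq \geq p_v + m q_v + \nu$ for some fixed $\nu > 0$ (uniform over the finitely many faces of $\mathcal N(P)$). Combined with $|y| < c_{E_l}|x|^{m_{E_l}}$ and $|y| > C_{E_r}|x|^{m_{E_r}}$, this forces $|x^p y^q|/|x^{p_v}y^{q_v}|$ to be bounded by a positive power of $|x|$, hence arbitrarily small once $\mathrm{diam}(U_0)$ is small.

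Summing a geometric-series bound over the lattice points (or absorbing high-order tails using analyticity, as done in Lemma \ref{edge}) yields $|P(x,y) - P_V(x,y)| \leq 2^{-N}|x^{p_v}y^{q_v}| \lesssim 2^{-N}|P_V(x,y)|$, which is (\ref{zzz}). For (\ref{yyy}), the $\lesssim 1$ bound is trivial by smoothness; for the second bound, I would differentiate termwise: $\partial_x^\alpha \partial_y^\beta (x^p y^q) = c_{p,q,\alpha,\beta}\, x^{p-\alpha}y^{q-\beta}$ (vanishing when $p<\alpha$ or $q<\beta$), so the same case analysis comparing $|x^{p-\alpha} y^{q-\beta}|$ to $|x^{p_v-\alpha} y^{q_v-\beta}|$ applies, with the exponent shifts cancelling in every ratio (since they affect numerator and denominator identically).

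The only mild subtlety I anticipate is the degenerate situations $m_{E_l} = 0$ (so $V$ is the leftmost vertex and the strip opens up to $|y| < \epsilon$) and $m_{E_r} = \infty$ (rightmost vertex, strip extends to the $x$-axis). In those cases one of the two ``smallness'' factors $c_{E_l}$ or $1/C_{E_r}$ is replaced by a suitable power of $|x|$ or of $|y|$, but the same geometric comparison still puts every other monomial in $P$ strictly below $P_V$, so no new ideas are needed. Everything else is a bookkeeping repetition of Lemma \ref{edge}, which justifies the author's omission.
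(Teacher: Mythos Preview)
Your proposal is correct and matches the paper's intent precisely: the paper omits the proof entirely, stating only that it is ``exactly the same'' as Lemma~\ref{edge}, and you have faithfully filled in those details by replacing the dominant edge $E$ with the dominant vertex $V$ and running the same monomial-comparison argument. The three-case split (points on $E_l$, points on $E_r$, remaining points) is sound; one small simplification is that the third case can be absorbed into the first two by conditioning only on the sign of $q-q_v$ (use the upper bound $|y|<c_{E_l}|x|^{m_{E_l}}$ when $q\ge q_v$ and the lower bound $|y|>C_{E_r}|x|^{m_{E_r}}$ when $q<q_v$, whereupon the Newton-polyhedron support inequality $p+m_{E_l}q\ge p_v+m_{E_l}q_v$ respectively $p+m_{E_r}q\ge p_v+m_{E_r}q_v$ gives a nonnegative power of $|x|$, strictly positive off $E_l\cup E_r$), but your version is equivalent.
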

Now let 
\begin{align}
\mathcal G_{0}(P_0)=\mathcal G_{0}(P) =\{U_{0,g}(V):V\in \mathcal V(P)\} \cup \{U_{0,g}(E,j): E\in\mathcal E(P)\,\, \mbox{and all}\,\,j \}, 
\end{align}
which are the collection of `good' regions in the 0-th stage.
 In addition, we say $U_{0,g}\in \mathcal G_{0}(P_0)$ is defined by $(E,m_E)$ if $U_{0,g}=  U_{0,g}(E,j)$ for some $E$ and $j$, where $-1/m_E$ is the slope of $E$, or defined by an edge $E$ for short. Similarly, $U_{0,g}\in \mathcal G_{0}(P_0)$ is defined by $(V,m_l,m_r)$ represents $U_{0,g}=  U_{0,g}(V)$ and $-1/m_l$, $-1/m_r$ are the slopes of the edges left and right to $V$, or defined by a vertex $V$ for short. 

Now we focus on the `bad' regions 
\begin{align}\label{bad7}
U_{0,b}(E,j) = \{(x,y) \in U_0: (r_j-\rho_0) x^{m_E} <y < (r_j+\rho_0) x^{m_E}\}.
\end{align}
Set
\begin{align}
\mathcal B_0 (P_0)=\mathcal B_0 (P)= \{U_{0,b}(E,j): E\in \mathcal E(P) \q\mbox{and} \q 1\leq j\leq J_E\}
\end{align}
which is the collection of `bad' regions in the $0$-stage. If $U_{0,b}\in \mathcal B_0(P_0)$ has the form of (\ref{bad7}), 
we say $U_{0,b}$ is defined by $(E, y=r_jx^{m_E})$ or defined by $y=r_jx^{m_E}$ for short. 
The following graph demonstrates a partition of $U$ (in the first quadrant) into `good' and `bad' regions, 
according to the analytic function $P(x,y)=xy(y^2-x)(y-x^2)(y-\sum_{n=1}^\infty x^n)^2$.
Notice that the function $R(x,y)=xy(y^2-x)(y-x^2)(y-x)^2$ also has the same partition.
Here, 
we choose $P(x,y)$ for the purpose of illustrating the convergence of the algorithm later. 
\newline
\newline

\begin{tikzpicture}[xscale=18,yscale=18]
\draw [<->] (0,0.5) -- (0,0) -- (0.5,0);
\draw[  thick, domain=0:0.5] plot (\x, {1.2*\x*\x});
\draw[ gray, domain=0:0.5] plot (\x, {\x*\x});
\draw[ thick, domain=0:0.5] plot (\x, {0.8*\x*\x});
\draw[ gray, domain=0:0.5] plot (\x, {\x});
\draw[ thick, domain=0:0.5] plot (\x, {(0.8)*\x});
\draw[ gray, domain=0:0.26] plot (\x, {sqrt(\x)});
\draw[ thick, domain=0: 0.23] plot (\x, {1.1*sqrt(\x)});
\draw[ thick, domain=0: 0.3] plot (\x, {0.9*sqrt(\x)});
\draw[  thick,domain=0:0.42] plot (\x, {(1.2)*\x});
\node at (0.53,0) {x};
\node at (0,0.53) {y};
\node at (0.53,0.53) {y=x};
\node at (0.55,0.25) {$y=x^2$};
\node at (0.29,0.53) {$y=\sqrt {x}$};
\node at (0.25,0.25) {\textbf{bad}};
\node at (0.47, 0.22) {\textbf{bad}};
\node at (0.22, 0.47) {\textbf{bad}};
\node at (0.39, 0.07)  {\textbf{good}};
\node at (0.3, 0.17)  {\textbf{good}};
\node at (0.2, 0.33)  {\textbf{good}};
\node at (0.06, 0.45)  {\textbf{good}};

\node at (0.27,-0.05) {`good' regions and `bad' regions of $P(x,y)=xy(y^2-x)(y-x^2)(y-\sum_{n=1}^\infty x^n)^2$ };
\node at (0.27,-0.08) {in the first quadrant. };
\end{tikzpicture}
We summarize the above discussion as follows:
\begin{proposition}[\textbf{A Single step of Partition}]\label{single}\q\\
Let $U$ be a standard region and $P$ be a real analytic function. If the size of $U$ is sufficiently small, then $U$ can be partitioned into two families of curved triangular regions: $\mathcal G_0(P)$ and $\mathcal B_0(P)$. For each $U_{0,g}\in \mathcal G_0(P)$, $U_{0,g}$ is defined by (\ref{goodregion}) or (\ref{corner00}). The behaviors of $P$ in $U_{0,g}$ are characterized by Lemma \ref{edge} or Lemma \ref{vertex}. Each $U_{0,b}\in \mathcal B_0(P)$ is defined by (\ref{bad7}). Finally, the cardinalities of $\mathcal G_0(P)$ and $\mathcal B_0(P)$ are finite, depending on $P$.    
\end{proposition}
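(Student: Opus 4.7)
The plan is to observe that this proposition is a synthesis of the constructions made immediately above, so the proof reduces to verifying four assertions: (i) the families $\mathcal G_0(P)$ and $\mathcal B_0(P)$ together exhaust $U$ (up to the axes and transition curves, which will be absorbed into adjacent regions by fixing conventions on strict versus non-strict inequalities), (ii) every region has the asserted curved triangular form, (iii) $P$ exhibits the monomial-like behavior in each good region, and (iv) the total number of regions is finite.

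For (i), I would reduce to the half $x>0$ (the other half is handled by $(x,y)\mapsto(-x,y)$), fix small $x$, and parametrize the slice $\{|y|<\epsilon\}$ by $(\mathrm{sgn}\,y,m,r)$ with $|y|=r\,x^m$ and $|r|\sim 1$. The Newton polyhedron supplies finitely many distinguished slopes $m_{E_1}<m_{E_2}<\cdots<m_{E_N}$ corresponding to $\mathcal E(P)$, together with the extremes $m=0$ and $m=\infty$. For $m$ strictly between two consecutive distinguished slopes, the point lies in the vertex region $U_{0,g}(V)$ for the common vertex $V$, once $c_{E_l}$ is chosen small and $C_{E_r}$ large. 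At each distinguished slope $m_E$, the parameter $r$ ranges over $I(E)=[c_E,C_E]\cup[-C_E,-c_E]$, split into the good components $I_g(E,j)$ producing $U_{0,g}(E,j)$ and the bad intervals $I_j^{\epsilon}(E)$ producing $U_{0,b}(E,j)$. Seamless matching at the transition curves $|y|=c_{E_l}x^{m_{E_l}}$ and $|y|=C_{E_r}x^{m_{E_r}}$ is ensured by using the same constant $c_{E_l}$ (resp.\ $C_{E_r}$) both in the vertex definition (\ref{corner00}) and in the edge definition of $I(E_l)$ (resp.\ $I(E_r)$); the degenerate cases $m_{E_l}=0$ and $m_{E_r}=\infty$ are handled by (\ref{corner001}) and its analogue.

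Items (ii) and (iii) are essentially immediate: the curved triangular shape of $U_{0,g}(E,j)$, $U_{0,g}(V)$ and $U_{0,b}(E,j)$ is visible from the defining formulas (\ref{goodregion}), (\ref{corner00})/(\ref{corner001}), and (\ref{bad7}); while the monomial-like behavior of $P$ is precisely Lemma \ref{edge} for edge-defined good regions and Lemma \ref{vertex} for vertex-defined ones, both applicable once ${\rm diam}(U)$ and $\epsilon$ are chosen sufficiently small depending on the fixed parameters $N,L,\{c_E\},\{C_E\}$ and $P$.

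For (iv), I would note that the Newton polyhedron of any real analytic $P$ has only finitely many compact vertices: walking along the Newton diagram from the vertical to the horizontal non-compact edge, the $q$-coordinates of successive vertices strictly decrease in $\mathbb N$, so $\#\mathcal V(P)$ and $\#\mathcal E(P)$ are finite. Each edge $E$ contributes at most $J_E+2$ good regions and exactly $J_E$ bad regions, with $J_E\leq q_{E,l}-q_{E,r}$ coming from the factorization $P_E(r)=r^{q_{E,r}}\sum c_{p,q}r^{q-q_{E,r}}$; summing over the finite $\mathcal E(P)$ and $\mathcal V(P)$ yields the finite cardinality bound. The main obstacle in the whole argument is purely the book-keeping in (i)—namely, verifying that the edge and vertex regions, defined independently of one another, actually fit together with no gaps or overlaps along the transition curves; once the matching constants $c_E,C_E$ are chosen consistently across adjacent edges and vertices, every other assertion in the proposition falls out of the preceding construction and the two lemmas.
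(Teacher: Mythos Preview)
Your proposal is correct and matches the paper's approach: the proposition is stated in the paper as a summary (``We summarize the above discussion as follows'') of the constructions and Lemmas~\ref{edge} and~\ref{vertex} just preceding it, with no separate proof given. Your write-up is in fact more explicit than the paper about verifying the covering property (i) and the finiteness (iv), but the underlying argument is the same.
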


\subsection{\large The resolution algorithm Part II: Iterations}\q\q  
\\

 % $\mathcal B_n$ is defined similarly in the $n$-stage. 
The next step is to iterate Proposition \ref{single}. One main problem is that the region $U_{0,b}\in \mathcal B_0(P)$ is not standard. 
Nevertheless, via an appropriate change of variables, we can always turn
a $U_{0,b}$ into a subset of a standard region $U_1$. 
%Here $P_0=P$ and $P_1$ is an analytic function of $(x^{1/M},y)$. 
Proposition \ref{single} can then be applied to $[U_1,P_1]$, where $P_1$ is obtained from $P_0=P$ via this change of variables. 
Notice that the arguments in the previous subsection work equally well for analytic functions of $(x^{1/M},y)$. 
The flowchart below illustrates the main ideas of how the algorithm runs. 
The letter `g' represents a `good' region, 
while `b' represents a bad region. 
Each time, we choose one `bad' region and `rescale' it (via change of variable) into a subset of a standard region.

\vspace{0.5in}
\begin{tikzpicture}[xscale=1.4,yscale=0.7]
\draw [help lines] (0,0) grid (8,1);
\node at (0.5,0.5) {g};
\node at (1.5,0.5) {b};
\node at (2.5,0.5) {\dots};
%\node at (3.5,0.5) {b};
\node at (4.5,0.5) {g};
\node at (5.5,0.5) {\dots};
\fill[gray!30 ] (3,0)--(4,0)--(4,1)--(3,1);
\node at (3.5,0.5) {\textbf{b}};
%\draw [<->] (5,0) -- (0,0) -- (0,5);
%\draw (4,0) -- (0,4);
%\draw[dashed,ultra thick]
   % (1.5,3.5) to [out=-80,in=135] (2.5,1.5);
%\draw [dashed,ultra thick]
 % (2.5,1.5) to [out=-45,in=160] (4.2,0.5);
\draw [->] (3.5,0.2) --(3.5,-2);
\node at (-0.5,0.5) {$U_0$};
\node at (2.5,-1) {$U_{0,b}$};
\draw [->] (2.5,-0.6) --(3.4,0.1);
\node at (5.5,-1) {Rescale $U_{0,b} \subset U_1$};
\end{tikzpicture}

\begin{tikzpicture}[xscale=1.4,yscale=0.7]
\draw [help lines] (0,0) grid (8,1);
\node at (0.5,0.5) {g};
\node at (1.5,0.5) {b};
\node at (2.5,0.5) {\dots};
%\node at (3.5,0.5) {b};
\node at (4.5,0.5) {g};
\node at (5.5,0.5) {\dots};
\fill[gray!30 ] (3,0)--(4,0)--(4,1)--(3,1);
\node at (3.5,0.5) {\textbf{b}};
%\draw [<->] (5,0) -- (0,0) -- (0,5);
%\draw (4,0) -- (0,4);
%\draw[dashed,ultra thick]
   % (1.5,3.5) to [out=-80,in=135] (2.5,1.5);
%\draw [dashed,ultra thick]
 % (2.5,1.5) to [out=-45,in=160] (4.2,0.5);
\draw [->] (3.5,0.2) --(3.5,-2);
\node at (-0.5,0.5) {$U_1$};
\node at (2.5,-1) {$U_{1,b}$};
\draw [->] (2.5,-0.6) --(3.4,0.1);
\node at (5.5,-1) {Rescale $U_{1,b} \subset U_2$};
\end{tikzpicture}

\begin{tikzpicture}[xscale=1.4,yscale=0.7]
\draw [help lines] (0,0) grid (8,1);
\node at (0.5,0.5) {g};
\node at (1.5,0.5) {b};
\node at (2.5,0.5) {\dots};
%\node at (3.5,0.5) {b};
\node at (4.5,0.5) {g};
\node at (5.5,0.5) {\dots};
\fill[gray!30 ] (3,0)--(4,0)--(4,1)--(3,1);
\node at (3.5,0.5) {\textbf{b}};
%\draw [<->] (5,0) -- (0,0) -- (0,5);
%\draw (4,0) -- (0,4);
%\draw[dashed,ultra thick]
   % (1.5,3.5) to [out=-80,in=135] (2.5,1.5);
%\draw [dashed,ultra thick]
 % (2.5,1.5) to [out=-45,in=160] (4.2,0.5);
\draw [->] (3.5,0.2) --(3.5,-2);
\node at (-0.5,0.5) {$U_2$};
\node at (2.5,-1) {$U_{2,b}$};
\draw [->] (2.5,-0.6) --(3.4,0.1);
\node at (5.5,-1) {Rescale $U_{2,b} \subset U_3$};
\node at (3.5,-2.5) {\dots};
\end{tikzpicture}

\vspace{0.5in}
Before diving into the details, we introduce the following notations which can help keep track on certain invariants as the iterations go deeper. 
\begin{definition}\label{def}
Let $(p_l,q_l)$ and $(p_r,q_r)$ be the leftmost and rightmost vertices of $\mathcal N(P)$, the Heights of $\mathcal N(P)$ or $P$ are defined as
\begin{align*}
&{\rm  Hght}(\mathcal N(P))={\rm Hght}(P) = q_l-q_r,
\\
&{\rm Hght}^*(\mathcal N(P))={\rm Hght}^*(P) = q_l.
\end{align*}
For an edge $E\in \mathcal E(P)$, let $(p_{E,l},q_{E,l})$ and $(p_{E,r},q_{E,r})$ be its left and right vertices. Then the height of this edge is defined to be
$$
{\rm Hght}(E) = q_{E,l}-q_{E,r}.
$$
If $\{r_{E,j}\}_{1\leq j\leq J_E}$ is the set of non-zero roots of $P_E(r)$ of orders $\{s_{E,j}\}_{1\leq j\leq J_E}$, then we define the order of $E$ as 
\begin{align}
{\rm Ord}(E) = \sum\limits_{j=1}^{J_E} s_{E,j}
\end{align}
and the order of $P$ to be 
\begin{align}
{\rm Ord}(P) = \sum\limits_{E\in \mathcal E(P)} {\rm Ord}(E)=  \sum\limits_{E\in \mathcal E(P)}\sum\limits_{j=1}^{J_E} s_{E,j}.
\end{align}
Finally, we say $r$ is a root of $P(x,y)$ or $\mathcal N(P)$ if $r=r_{E,j}$ for some $E\in \mathcal E(P)$ and some $1\leq j\leq J_E$.
\end{definition}

Under the above notation, 
\begin{align}
{\rm Ord}(E)\leq {\rm Hght}(E)=q_{E,l}-q_{E,r}  
\end{align}
and
\begin{align}
{\rm Ord}(P) \leq {\rm Hght}(P)=q_l-q_r\leq q_l= {\rm Hght}^*(P).
\end{align}
Notice that  $$\#\mathcal B_0(P) = \sum_{E\in \mathcal E(P)} J_E \leq \sum_{E\in \mathcal E(P)} {\rm Ord}(E) ={\rm  Ord}(P),$$
which gives 
\begin{lemma}
$$
\# \mathcal B_0(P) \leq {\rm Ord}(P)  \leq   {\rm Hght}^*(P).
$$
\end{lemma}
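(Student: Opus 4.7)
The plan is to chain three elementary observations, each already implicit in the definitions and the discussion preceding the lemma. First I would unpack $\#\mathcal B_0(P)$: by construction, $\mathcal B_0(P)=\{U_{0,b}(E,j):E\in\mathcal E(P),\,1\leq j\leq J_E\}$, so
\[
\#\mathcal B_0(P)=\sum_{E\in\mathcal E(P)} J_E.
\]
Since $J_E$ counts the distinct nonzero real roots of $P_E(r)$ while ${\rm Ord}(E)=\sum_{j=1}^{J_E}s_{E,j}$ counts those roots with multiplicity, and each $s_{E,j}\geq 1$, we immediately get $J_E\leq {\rm Ord}(E)$, hence $\#\mathcal B_0(P)\leq {\rm Ord}(P)$.

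The second step is to bound ${\rm Ord}(E)\leq {\rm Hght}(E)=q_{E,l}-q_{E,r}$ for each edge $E$. I would use the factorization noted right before Definition \ref{def}: writing $P_E(r)=r^{q_{E,r}}\sum_{(p,q)\in E}c_{p,q}r^{\,q-q_{E,r}}$, the second factor is a polynomial in $r$ of degree $q_{E,l}-q_{E,r}$, so the total multiplicity of its nonzero real roots—which is exactly ${\rm Ord}(E)$—is at most $q_{E,l}-q_{E,r}$.

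The third step is a telescoping argument over edges. Order the compact edges $E_1,\dots,E_N$ of $\mathcal N(P)$ from left to right along the Newton diagram; consecutive edges share a vertex, so $q_{E_i,r}=q_{E_{i+1},l}$. Summing yields
\[
\sum_{i=1}^N{\rm Hght}(E_i)=\sum_{i=1}^N(q_{E_i,l}-q_{E_i,r})=q_{E_1,l}-q_{E_N,r}=q_l-q_r={\rm Hght}(P).
\]
(If $\mathcal E(P)=\emptyset$ then $\mathcal N(P)$ has a single vertex and both sides are $0$.) Combining with ${\rm Hght}(P)=q_l-q_r\leq q_l={\rm Hght}^*(P)$ gives ${\rm Ord}(P)\leq {\rm Hght}^*(P)$, completing the chain.

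There is no real obstacle here; the one step requiring mild care is the telescoping, which needs the correct left-to-right ordering of the compact edges and the convention that handles the edge-free case. Everything else is a direct consequence of the definitions just introduced.
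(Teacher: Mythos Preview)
Your proof is correct and follows essentially the same approach as the paper: the paper also notes $\#\mathcal B_0(P)=\sum_E J_E\leq\sum_E{\rm Ord}(E)={\rm Ord}(P)$, then uses ${\rm Ord}(E)\leq{\rm Hght}(E)$ and ${\rm Ord}(P)\leq{\rm Hght}(P)\leq{\rm Hght}^*(P)$. You are simply more explicit about the telescoping that gives $\sum_E{\rm Hght}(E)={\rm Hght}(P)$, which the paper treats as immediate from the definitions.
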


 Choose a $U_{0,b}\in \mathcal B_0(P_0)$ and assume it is defined by $y=r_0x^{m_0}$. The next step is to utilize change of variables to turn $[U_{0,b},P]$ into a standard pair. % s.t. previous analysis can be performed on.
  Adopt the previous notations $[U_0,P_0]=[U,P]$ and $(x_0,y_0)=(x,y)$ and choose $x$ to be the principal 
 variable  
which will be unchanged during iterations, i.e. $x=x_n$ for all $n\in\mathbb N$. 
Change variables 
\[ \left\{ \begin{array}{ll}
         x_0=x_1 \\
        y_0=(r_0+y_1)x_1^{m_0}
        \end{array} \right. \] 
then the region $U_{0,b}$ under the new coordinates/variables is  
\[ \left\{ \begin{array}{ll}
\,\,\,\,0< x_1<\epsilon_0 \\
-\rho_0<  y_1<\rho_0.
%\end{align*}
\end{array} \right. \] 
Since $\epsilon_0\neq \rho_0$, this region is not standard, but 
 can be embedded into a larger standard region $U_1$. 
Let $\epsilon_1$ be sufficiently small but greater than $\rho_0$. Set

\[ \left\{ \begin{array}{ll}
         P_1(x_1,y_1) = P(x_1, (r_0+y_1)x_1^{m_0}) \\
         \\
        U_1= \{(x_1,y_1):\, 0<x_1<\epsilon_1, -\epsilon_1 <y_1 <\epsilon_1\}.
\end{array} \right. \] 
Then $[U_1, P_1]$ is a standard pair in the coordinates $(x_1,y_1)$.
 By applying Proposition \ref{single} to $[U_1,P_1]$, a finite collection $\mathcal G_1(P_1)$ of `good' regions $U_{1,g}$'s and a finite collection $\mathcal B_1(P_1)$ of `bad' regions $U_{1,b}$'s are obtained. In a `good' region $U_{1,g}$, the function $P_1(x_1,y_1)$ behaves like a monomial of $(x_1,y_1)$ and no further treatment is required. 
 For the `bad' regions,
choose a $U_{1,b}\in \mathcal B_1(P_1)$ and assume $U_{1,b}$ is defined by $y_1=r_1x_1^{m_1}$, i.e. 
$$
U_{1,b} = \{(x_1,y_1)\in U_1: (r_1-\rho_1)x_1^{m_1} < y_1 <(r_1+\rho_1)x_1^{m_1}\}.
$$
where $\rho_1$ is an analogue of $\rho_0$ in the $0$-th stage of iteration.  
Like what has been done, perform the following change of variables
\[ \left\{ \begin{array}{ll}
         x_1=x_2 \\
        y_1=(r_1+y_2)x_2^{m_1},
        \end{array} \right. \] 
choose $\epsilon_2$ sufficiently small but greater than $\rho_1$ and set 
%Then a new standard pair $[U_2,P_2]$ is obtained: %\footnote{Again, we choose one quadrant and still use $U_2$ to represent that quadrant.}$ is obtained as follows:
\[ \left\{ \begin{array}{ll}
P_{2}(x_2,y_2)=P_1(x_2,(r_1+y_2)x_2^{m_1})
\\
\\
U_{2}=\{(x_2,y_2):\, 0<x_2<\epsilon_2, -\epsilon_2<y_1<\epsilon_2\}
\end{array} \right. \] 
Same procedures are repeated on the standard pair $[U_2,P_2]$ and so on. 
Consequently, 
a collection of standard pairs $\{[U_n,P_n]\}$ is obtained from these iterations.
Notice that this collection forms a tree structure with $[U_0,P_0]=[U,P]$ being the top, 
 with the parameter $n$ representing the level of iterations.
Sometimes we need more information than merely the level of iterations. In those cases, 
we use the notation $[U_n,P_n]=  [U_{n,\boldsymbol{\alpha}},P_{n,\boldsymbol{\alpha}}]$, where 
% represents $[U_n,P_n]$ is obtained from the $n$-th stage of iteration (or we say a $n$-th generation of $[U_0,P_0]$). Notice that for a $n$, there can be many $[U_n,P_n]$ and the structure of $\{[U_n,P_n]\}$ is a tree (non-linear). If we want to specify the `identity' of $[U_n,P_n]$, set $[U_n,P_n]=  [U_{n,\boldsymbol{\alpha}},P_{n,\boldsymbol{\alpha}}]$ where the subindex $\bsb{\alpha}$ represents the `path' from $[U_0,P_0]$ to $[U_{n,\boldsymbol{\alpha}},P_{n,\boldsymbol{\alpha}}]$. 
%The subindex $\bsb\alpha$ can also be viewed as the code that compresses the genealogy information which is needed to obtain $[U_{n,\bsb{\alpha}}, P_{n,\bsb{\alpha}}]$ from $[U_0,P_0]$; or conversely, $P_0(x_0,y_0)$ can be `decoded' from $P_{n,\bsb{\alpha}}(x_n,y_n)$ by $\bsb{\alpha}$. More precisely:
%\\

($\star$) $\bsb\alpha$ contains the information of the change of variables, i.e. for $0\leq k\leq n-1$ the following is known if $\bsb\alpha$ is given:
\begin{align}\label{cc01}
\begin{cases}
 x_k=x_{k+1} \\ 
 y_k= (r_{k}+y_{k+1})x_{k+1}^{m_{k}}.
 \end{cases}
\end{align}
We also use $U_{n,b,\bsb\alpha}$ and $U_{n,g,\bsb\alpha}$ to represent any `bad' region and any `good' region in $U_{n,\bsb\alpha}$. Since $U_{n,\bsb\alpha}$ may have more than one such regions, we list them by $U_{n,b,\bsb\alpha,j}$ and $U_{n,g,\bsb\alpha,j}$ when necessary. The cardinality in $j$ is uniformly bounded (see Lemma \ref{count}) and thus there is no need to specify its range. 
Notice that $U_{n,g,\bsb\alpha,j}$ is a leaf of the tree, i.e. it has no child and no further analysis is needed. 
%
%
%
% New subindex is introduced here~~~~~~~~~~~~~~~~~~~~~~~~~~~~~~~~~~~~~
%
%
%

Both notations are being used here: with and without the subindex $\bsb\alpha$. To avoid confusion, we follow the rules below:
\\
(1) The one without such subindex is our primary choice. We often use $[U_n,P_n]$ to represent an arbitrary pair from the $n$-th stage of iteration. 
\\
(2) The other is used occasionally. It is employed typically when at least two different pairs from the same stage of iterations are mentioned simultaneously.

These conventions also apply to $U_{n,g}$'s, $U_{n,b}$'s, $U_{n,g,\bsb\alpha}$ and $U_{n,b,\bsb\alpha}$. 
\begin{example}\label{EX08}
The following graphs demonstrate the first step of the algorithm, for the given analytic function $P(x,y)=xy(y^2-x)(y-x^2)(y-\sum_{n=1}^\infty x^n)^2$. The Newton polyhedron $\mathcal N(P)$ has 3 compact edges: $E_1$, $E_2$ and $E_3$, see Figure A below.
\vspace{0.3in}

\begin{center}
\begin{tikzpicture}[scale=0.78]
\draw [<->,thick] (0,11) node (yaxis) [above] {$y$}
        |- (11,0) node (xaxis) [right] {$x$};
  \draw[help lines] (0,0) grid (10,10);
         
           \fill[gray!30] 
	 			  (1,10)--
			        (1,6)--
			        	(2,4)--
				%(3,3)--
				(4,2)--
				(6,1)--
				(10,1)--(10,10);
         
                \fill
        (6,1)   circle (2pt) 
       
	(4,2)  circle (2pt)
	(2,4) circle (2pt)
	(1,6) circle (2pt)
       ;

        \draw [thick]	(1,10)--
			        (1,6)--
			        	(2,4)--
				%(3,3)--
				(4,2)--
				(6,1)--
				(10,1);
	%\draw[thick]   (4,2)--(7,0)--(0,14/3);

	\node at (5,5) {$\mathcal N(P)$};
	\node at (1.8,6){\tiny  $V_1$(1,6)};
	
		\node at (2.8,4) {\tiny$V_2$(2,4)};
	%\node at (3.5,3) {\tiny(3,3)};
	\node at (4.6,2.1) {\tiny$V_3$(4,2)};
	\node at (6.6,1.2) {\tiny$V_4$(6,1)};
	%\node at (2,2.5) {$m=\frac{3}{2}$};
	%\node at (5,11) {\small $P(x,y)=xy(y^2-x)(y-x^2)(y-\sum_{n=1}^\infty x^n)^2$};
%	\node at (5,1.5) {(5,2)};
	\node at (13.1,8)  {${\rm Hght}^*(P)=6$};
	\node at (13,7)  {${\rm Hght}(P)=5$};
	\node at (13,6)  {${\rm Ord}(E_1)=1$};
	\node at (13,5)  {${\rm Ord}(E_2)=2$};
	\node at (13,4)  {${\rm Ord}(E_3)=1$};
	\node at (12.9,3)  {${\rm Ord}(P)=5$};
\node at (2,5)  {\tiny $E_1$};
\node at (3.2,3.2)  {\tiny $E_2$};
\node at (5.5,1.6)  {\tiny $E_3$};

 \node at (5,-1) {Figure A};

 % \draw (0cm,2mm) -- (30:2cm);
\end{tikzpicture}
\end{center}
In the edge $E_1$, $P_{E_1} = x(y^2-x)y^4$
has only two non-zero roots and the corresponding `bad' regions are defined by $y=x^{\frac{1}{2}}$ and $y=-x^{\frac{1}{2}}$. 
To handle the former, 
change variables: $(x,y)=(x_1, x_1^{\frac{1}{2}}(1+y_1))$ yields 
$P_1(x_1,y_1)=P(x_1,x^{\frac{1}{2}}(1+y_1)) = x_1^{\frac{7}{2}}y_1\cdot O(1)$. Here $O(1)$ represents a function which
is non-vanishing at the origin. In Figure B, $\mathcal N(P_1)$ 
has only one vertex and the algorithm stops. The latter is similar. 
\begin{center}

\begin{tikzpicture}[scale=0.6]
\draw [<->,thick] (0,11) node (yaxis) [above] {$y_1$}
        |- (11,0) node (xaxis) [right] {$x_1$};
  \draw[help lines] (0,0) grid (10,10);
         
           \fill[gray!20] 
	 			  (4,10)--
			        (4,1)--
			        	(10,1)--
				%(3,3)--
				(10,10);%--
				%(6,1)--
				%(10,1)--(10,10);
         
                \fill
        (4,1)   circle (2pt) 
       
	%(4,2)  circle (2pt)
	%(2,4) circle (2pt)
	%(1,6) circle (2pt)
       ;

        \draw [thick]	(10,1)--
			        (4,1)--
			        	(4,10);%--
				%(3,3)--
%				(4,2)--
%				(6,1)--
%				(10,1);
	%\draw[thick]   (4,2)--(7,0)--(0,14/3);

	\node at (7,5.5) {$\mathcal N(P_1)$};
%	\node at (1.8,6){\tiny  $V_1$(1,6)};
	
%		\node at (2.8,4) {\tiny$V_2$(2,4)};
	\node at (3,1.5) { $V'(4,1)$};
%	\node at (4.6,2.1) {\tiny$V_3$(4,2)};
%	\node at (6.6,1.2) {\tiny$V_4$(6,1)};
%	\node at (2,2.5) {$m=\frac{3}{2}$};
	%\node at (4.5,11){$(x,y)=(x_1,x_1^{1/2}+y_1x_1^{1/2})$};
	%\node at (4.5,12) {\small After change of variables };
	%\node at (14, 8)  {\small $P_1(x_1,y_1)=x_1^{7/2}y_1\cdot O(1)$};
	\node at (13,7) {Hght$^*(P_1)=1$};
	\node at (12.88,6) {Hght$(P_1)=0$};
	\node at (12.78,5) {Ord$(P_1)=0$};
	%\node at (5,-1) {The $\mathcal N (P_1)$ has only vertex, the algorithm ends. };
%	\node at (5,1.5) {(5,2)};
	\node at (5,-1) {Figure B};
%\node at (2,5)  {\tiny $E_1$};
%\node at (3.2,3.2)  {\tiny $E_2$};
%\node at (5.5,1.6)  {\tiny $E_3$};
%\node at 

 % \draw (0cm,2mm) -- (30:2cm);
\end{tikzpicture}
\end{center}
There is only one root for $P_{E_2} = -x^2y^2(y-x)^2$
and the corresponding `bad' region is defined by $y=x$. 
Change variables: $(x,y)=(x_1, x_1(1+y_1))$ yields 
$P_1(x_1,y_1)=x_1^6(y_1-\sum_{n=1}^\infty x^n)^2\cdot O(1)$. 
 But $\mathcal N(P_1)$ 
still has one edge which also has one non-zero root; See Figure C. The algorithm does not stop. 
Doing the change variables $(x_{k-1},y_{k-1})=(x_k,x_{k}(1+y_k))$ only gives 
$\mathcal N(P_k)= \mathcal N(P_{k-1})+(2,0)$ for $k\geq 2$.

\begin{center}
\begin{tikzpicture}[scale=0.6]
\draw [<->,thick] (0,11) node (yaxis) [above] {$y_1$}
        |- (11,0) node (xaxis) [right] {$x_1$};
  \draw[ help lines] (0,0) grid (10,10);
         
           \fill[gray!20] 
	 			  (6,10)--
			        (6,2)--
			        	(8,0)--
				%(3,3)--
				(10,0)--%--
				(10,10);
				%(6,1)--
				%(10,1)--(10,10);
         
                \fill
        (6,2)   circle (2pt) 
       
	%(4,2)  circle (2pt)
	%(2,4) circle (2pt)
	%(1,6) circle (2pt)
       ;

        \draw [thick]	(6,10)--
			        (6,2)--
			        	(8,0)--
				(10,0);%--
				%(3,3)--
%				(4,2)--
%				(6,1)--
%				(10,1);
	%\draw[thick]   (4,2)--(7,0)--(0,14/3);

	\node at (8,5) {$\mathcal N(P_1)$};
%	\node at (1.8,6){\tiny  $V_1$(1,6)};
	
%		\node at (2.8,4) {\tiny$V_2$(2,4)};
	\node at (8.9,0.3) {\tiny $V'_2(8,0)$};
	\node at (6.8,2.3) {\tiny $V'_1(6,2)$};
%	\node at (4.6,2.1) {\tiny$V_3$(4,2)};
%	\node at (6.6,1.2) {\tiny$V_4$(6,1)};
%	\node at (2,2.5) {$m=\frac{3}{2}$};
	%\node at (4.5,11) {$(x,y)=(x_1,x_1+y_1x_1)$};
	%\node at (4.5,12) {\small After change of variables };
	%\node at (15, 8) {\small $P_1(x_1,y_1)=x_1^6(y_1-\sum_{n=1}^\infty x^n)^2\cdot O(1)$};
	\node at (13,7) {Hght$^*(P_1)=2$};
	
	\node at (12.88,6) {Hght$(P_1)=2$};
	
	\node at (12.78,5) {Ord$(P_1)=2$};
	\node at (12.78,4) {Ord$(E')=2$};

	\node at (7.3,1.3) {\tiny$ E'$};
	\node at (5,-1) {Figure C};
%	\node at (5,1.5) {(5,2)};

%\node at (2,5)  {\tiny $E_1$};
%\node at (3.2,3.2)  {\tiny $E_2$};
%\node at (5.5,1.6)  {\tiny $E_3$};
%\node at 

 % \draw (0cm,2mm) -- (30:2cm);
\end{tikzpicture}

\end{center}

Finally, $P_{E_3} = -x^4y(y-x^2)$. 
The only non-zero root is $y=x^2$. 
Change of variables: $(x,y)=(x_1, x_1^2(1+y_1))$ yields 
$P_1(x_1,y_1)=x_1^8y_1\cdot O(1)$. 
The algorithm stops; see Figure D.
%See Figure D, $\mathcal N(P_1)$ 
%has only one vertex and the algorithm stops. 

\begin{center}

\begin{tikzpicture}[scale=0.6]
\draw [<->,thick] (0,11) node (yaxis) [above] {$y_1$}
        |- (11,0) node (xaxis) [right] {$x_1$};
  \draw[help lines] (0,0) grid (10,10);
         
           \fill[gray!20] 
	 			  (8,10)--
			        (8,1)--
			        	(10,1)--
				%(3,3)--
				(10,10);%--
				%(6,1)--
				%(10,1)--(10,10);
         
                \fill
        (8,1)   circle (2pt) 
       
	%(4,2)  circle (2pt)
	%(2,4) circle (2pt)
	%(1,6) circle (2pt)
       ;

        \draw [thick]	(8,10)--
			        (8,1)--
			        	(10,1);%--
				%(3,3)--
%				(4,2)--
%				(6,1)--
%				(10,1);
	%\draw[thick]   (4,2)--(7,0)--(0,14/3);

	\node at (9,6) {$\mathcal N(P_1)$};
%	\node at (1.8,6){\tiny  $V_1$(1,6)};
	
%		\node at (2.8,4) {\tiny$V_2$(2,4)};
	\node at (9.1,1.5) { V'(8,1)};
%	\node at (4.6,2.1) {\tiny$V_3$(4,2)};
%	\node at (6.6,1.2) {\tiny$V_4$(6,1)};
%	\node at (2,2.5) {$m=\frac{3}{2}$};
	%\node at (4.5,11){$(x,y)=(x_1,x_1^2+y_1x_1^2)$};
	%\node at (4.5,12) {\small After change of variables };
%	\node at (14, 8)  {\small $P_1(x_1,y_1)=x_1^8y_1\cdot O(1)$};
	\node at (13,7) {Hght$^*(P_1)=1$};
	\node at (12.88,6) {Hght$(P_1)=0$};
	\node at (12.78,5) {Ord$(P_1)=0$};
%	\node at (5,1.5) {(5,2)};
\node at (5,-1) {Figure D};%{The $\mathcal N (P_1)$ has only vertex, the algorithm ends. };

%\node at (2,5)  {\tiny $E_1$};
%\node at (3.2,3.2)  {\tiny $E_2$};
%\node at (5.5,1.6)  {\tiny $E_3$};
%\node at 

 % \draw (0cm,2mm) -- (30:2cm);
\end{tikzpicture}

\end{center}

\end{example}
\vspace{0.5in}

It is worth mentioning that, the change of variables: $(x_n,y_n) \to (x_{n+1},y_{n+1})$ maps one-to-one from $U_{n,b}$ to a rectangular subset of $U_{n+1}$. 
Thus one can also embed such subset back into $U_{n}$ and etc.. 
%$$\dots\to U_{n+2}\to U_{n+1}\to U_n\to \dots\to U_1\to U_0 .$$
In addition, if the change of variables: $(x,y) \to (x_n,y_n)$ is specified, then we can use both coordinates $(x,y)$ and $(x_n,y_n)$ to identify
points in $ U_n$ (or $U_{n,b}$, $U_{n,g}$). The relation between the two coordinates $(x,y)=\rho_n^{-1}(x_n,y_n)$ is given by
%identify $(x,y) \in U_n$ (or $U_{n,b}$, $U_{n,g}$) with $(x_n,y_n)\in U_n$ (or $U_{n,b}$, $U_{n,g}$). 
% To be precise, there is a deffeomorphism $\rho_n^{-1}$ : 
%\begin{align}
%\rho_n^{-1}: U_n&\longmapsto \rho_n^{-1}(U_n)\subset U_0
%\\
%(x_n,y_n) &\longmapsto(x,y)\label{cc03}
%\end{align}
%where (\ref{cc03}) is defined by the composition of change of variables (\ref{cc01}). 
%More precisely, $(x,y)=\rho_n^{-1}(x_n,y_n)$ means
\begin{align}\label{cc09}
\begin{cases}
 x=x_{n} \\ 
 y= r_0x^{m_0}+r_1x^{m_0+m_1}+\dots+r_{n-1}x^{m_0+\dots+m_{n-1}}+y_nx^{m_0+\dots+m_{n-1}}.
 \end{cases}
\end{align}
In the above notation, the $(k+1)$-th stage of iteration is generated by the `bad' region defined by $y_k =r_kx^{m_k}$ for $0\leq k\leq n-1$. 
Under this notation, %$P_n$ is defined by:
$
P_n=P\circ \rho_n^{-1}.
$
We should also specify $\rho_n^{-1}$ 
to $\rho^{-1}_{n,\bsb\alpha}$, if $U_{n}$ is specified to $U_{n,\bsb\alpha}$.
%Notice that for all $j$, $U_{n,g,\bsb\alpha,j}$'s and $U_{n,b,\bsb\alpha,j}$'s 
%are sharing the same $\rho^{-1}_{n,\bsb\alpha}$ with $U_{n,\bsb\alpha}$. In particular, 
%$\{\rho_{n,\bsb\alpha}^{-1}(U_{n,g,\bsb\alpha,j})\}_{n,\bsb\alpha,j}$
% are disjoint curved triangular regions in $U_0$. 
%It will become clear later that $\{\rho_{n,\bsb\alpha}^{-1}(U_{n,g,\bsb\alpha,j})\}_{n,\bsb\alpha,j}$ 
%will form a finite disjoint (dense) partition of $U_0$.% when excluding a finite collection of curves. %of the form $\{y_n=0\}$. 

To complete the algorithm,  we need to settle the following two questions:
\\
 \q\q\q(i)  \,\,In each stage of iteration, is the cardinality of $\{U_{n,b,\bsb\alpha,j}\}_{\bsb\alpha,j}$ bounded above uniformly?
\\
(ii) Does this procedure terminate after a finite steps?
\\
The answer to the second question is more delicate and extra preparation is needed, but the first can be answered by:

\begin{lemma}\label{count}
For each $n\geq 0$, the cardinality of $\{U_{n,b,\bsb\alpha,j}\}_{\bsb\alpha,j}$ 
is bounded above by ${\rm Ord}(P)$.
%$\leq Ord(P) \leq Hght(P)$.
\end{lemma}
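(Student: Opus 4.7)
The plan is to induct on $n$, but with a strengthened inductive hypothesis: at every stage $n$,
$$
\sum_{\bsb\alpha \text{ at stage } n}{\rm Ord}(P_{n,\bsb\alpha})\leq {\rm Ord}(P).
$$
This immediately implies the lemma, since the already-proved single-pair bound $\#\mathcal B_0(Q)\leq {\rm Ord}(Q)$ gives
$$
\#\{U_{n,b,\bsb\alpha,j}\}_{\bsb\alpha,j}=\sum_{\bsb\alpha}\#\mathcal B_0(P_{n,\bsb\alpha})\leq \sum_{\bsb\alpha}{\rm Ord}(P_{n,\bsb\alpha})\leq {\rm Ord}(P).
$$
The base case $n=0$ is trivial. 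For the inductive step the reduction is to a one-step estimate for a single blow-up substitution, which I isolate as a lemma below.

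\textbf{One-step inequality.} Let $[U,Q]$ be a standard pair, $E\in\mathcal E(Q)$, and let $r$ be a nonzero root of $Q_E(\cdot)$ of exact order $s$. Let $\tilde Q(x',y')=Q(x',(r+y')x'^{m_E})$ be the function on the rescaled standard region. Then the leftmost vertex of $\mathcal N(\tilde Q)$ is $(e_E,s)$, and consequently
$$
{\rm Ord}(\tilde Q)\leq {\rm Hght}^*(\tilde Q)=s.
$$
To prove this I would split $\tilde Q = x'^{e_E}Q_E(r+y')+\sum_{(p,q)\notin E}c_{p,q}x'^{p+m_E q}(r+y')^q$. By definition of $e_E$, every $(p,q)\notin E$ with $c_{p,q}\neq 0$ satisfies $p+m_Eq>e_E$, so every monomial of the second sum has $x'$-degree strictly larger than $e_E$; in particular, no term of $\tilde Q$ has $x'$-degree below $e_E$, and at $x'$-degree exactly $e_E$ the only contribution comes from $x'^{e_E}Q_E(r+y')$. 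Since $r$ is a root of $Q_E(\cdot)$ of exact order $s$, we may factor $Q_E(r+y')=y'^s R(y')$ with $R(0)\neq 0$; so the lowest $y'$-power appearing at $x'$-degree $e_E$ is $y'^s$, with nonzero coefficient. This pins down the leftmost vertex of $\mathcal N(\tilde Q)$ at $(e_E,s)$, whence ${\rm Hght}^*(\tilde Q)=s$ and ${\rm Ord}(\tilde Q)\leq s$.

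\textbf{Assembly via induction.} Each child $\bsb\alpha$ at stage $n+1$ is spawned from a unique bad region $U_{n,b,\bsb\alpha',j}$ of a parent $\bsb\alpha'$, associated to an edge $E\in\mathcal E(P_{n,\bsb\alpha'})$ and a nonzero root $r_{E,j}$ of $(P_{n,\bsb\alpha'})_E$ of order $s_{E,j}$. Applying the one-step inequality to $[U_{n,\bsb\alpha'},P_{n,\bsb\alpha'}]$ yields ${\rm Ord}(P_{n+1,\bsb\alpha})\leq s_{E,j}$. Summing over children of $\bsb\alpha'$ and then over all $\bsb\alpha'$ at stage $n$,
$$
\sum_{\bsb\alpha}{\rm Ord}(P_{n+1,\bsb\alpha})\leq \sum_{\bsb\alpha'}\sum_{E,j}s_{E,j}=\sum_{\bsb\alpha'}{\rm Ord}(P_{n,\bsb\alpha'})\leq {\rm Ord}(P),
$$
where the last inequality is the inductive hypothesis. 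This closes the induction and the lemma follows.

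The only delicate point, and hence the main obstacle, is the identification of the leftmost vertex of $\mathcal N(\tilde Q)$ in the one-step inequality: one must verify both that no monomial from $Q-Q_E$ can contribute at $x'$-degree $e_E$ (clear from the defining inequality of $e_E$), and that the coefficient of $x'^{e_E}y'^s$ is not accidentally annihilated. The latter is precisely the content of ``$r$ is a root of exact order $s$'', so nothing is lost. All other parts of the argument are straightforward bookkeeping on the tree of iterations.
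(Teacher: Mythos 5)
Your proposal is correct and follows essentially the same route as the paper: the paper's key observation (\ref{track}) is precisely your ``one-step inequality'' identifying $(e_E,s_{E,j})$ as the leftmost vertex of the child's Newton polyhedron, from which ${\rm Ord}(P_{n+1,\bsb\alpha})\leq {\rm Hght}^*(P_{n+1,\bsb\alpha})=s_{E,j}$, and the summation over edges and roots is exactly the paper's iteration formula (\ref{it00}). The only difference is cosmetic: you make the induction on $n$ explicit with a strengthened hypothesis, whereas the paper works out the $n=1$ case in detail and then states that higher $n$ follows from the same formula.
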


\begin{proof}
Indeed, there is a bijection between $\{U_{0,b}\}$ and 
the non-zero roots of $P_E(r)$ $E\in \mathcal E(P)$: 
each $U_{0,b}=U_{0,b}(E,j)$ is defined by $(E, y=r_{E,j}x^{m_E})$.
 Assume the order of $r_{E,j}$ is $s_{E,j}$. 
Then $P_1(x_1,y_1)=P_{1,E,j}(x_1,y_1)$ is obtained by setting
 $P_1(x_1,y_1)=P_0(x_1,(y_1+r_{E,j})x_1^{m_E})$. 
Here, $P_{1,E,j}$ is used to specify that $P_1$ is defined by the root $r_{E,j}$. 

The following observation serves as an bridge between $\mathcal N(P_0)$ and $\mathcal N(P_1)$.
%, and eventually 
%between $\mathcal N(P_0)$ and $\mathcal N(P_n)$. 
Let $(p_{E,l},q_{E,l})$ be the left vertex of $E$ and $(p_{1,l},q_{1,l})$ 
be the leftmost vertex of $\mathcal N(P_1)$ then 
\begin{align}\label{track}
	\left\{ \begin{array}{rcl}
      p_{1,l} = p_{E,l}+m_E\cdot q_{E,l}
	\\
	\\  q_{1,l} = s_{E,j}.
	                \end{array}\right.
\end{align}
This implies the order $s_{E,j}$ (in the $0$-stage) is equal to the Hght$^*(P_{1,E,j})$.

To prove (\ref{track}), notice  
$$
P_E(x,y) = P_E(x_1,(y_1+r_{E,j})x_1^{m_E}) = x_1^{p_{E,l}+m_E\cdot q_{E,l}}y_1^{s_{E,j}}\cdot O(1).
$$
Indeed, the fact that the degree of $y_1$ is $s_{E,j}$ follows from the fact that $r_{E,j}$ 
is a root of $P_E(r)$ of order $s_{E,j}$. Moreover, every term in 
$$
P_1(x_1,y_1)-P_E(x_1,(y_1+r_{E,j})x_1^{m_E})
$$
has a $x_1$-degree strictly greater than $(p_{E,l}+m_E\cdot q_{E,l})$. Consequently, $(p_{E,l}+m_E\cdot q_{E,l},s_{E,j})$ is the leftmost vertex of $\mathcal N(P_1)$.

Thus %Ord$(P_{1,E,j})=$ 
${\rm Ord}(P_1) \leq {\rm Hght}^*(P_1)= s_{E,j}$ and the cardinality of `bad' regions $\{U_{1,b}\}$ from a single $P_1$ is at most Ord$(P_1)\leq s_{E,j}$.
 Counting all possible $P_1$ (coming from different roots of different edges), the number of all possible $U_{1,b}$ is bounded by 
\begin{align}\label{it00}
\sum\limits_{E\in \mathcal E(P)}\sum\limits_{1\leq j\leq J_E}{\rm Ord}(P_{1,E,j}) \leq \sum\limits_{E\in \mathcal E(P)}\sum\limits_{1\leq j\leq J_E} s_{E,j} = {\rm Ord}(P). %\leq {\rm Hght}^*(P).
\end{align}
The cases when $n\geq 2$ also follow from iterating (\ref{it00}).  
\end{proof}

We now turn to the second question, which is the most crucial part of the algorithm. 
Assume the procedure does not stop. We then obtain an infinite chain of pairs:
\begin{align}\label{chain}
[U_0,P_0]\to [U_1,P_1]\to[U_2,P_2]\to\dots\to[U_n,P_n]\to[U_{n+1},P_{n+1}]\to\dots
\end{align}
We shall find certain patterns inside this chain. % (\ref{chain}).
 Specify the change of variables from $[U_n,P_n]\to[U_{n+1},P_{n+1}]$ as 

\[ \left\{ \begin{array}{ll}
         x_n=x_{n+1} \\
        y_n=(r_n+y_{n+1})x_{n+1}^{m_n} \end{array} 
        \right. \]     
          
Then $r_n$ is a root of an edge in $\mathcal N(P_n)$. 
Let $s_n$ be the order of $r_n$ and
 $(p_{n,l},q_{n,l})$ be the leftmost vertex of $\mathcal N(P_n)$ 
and $(p_n,q_n)$ be the left vertex of the edge in $\mathcal N(P_n)$ 
that defines $[U_{n+1},P_{n+1}]$. By (\ref{track}) one has 
\begin{align}\label{induction}
	\left\{ \begin{array}{lcl}
      p_{n+1}\geq p_{n+1,l} = p_{n}+m_n\cdot q_{n}
	\\
	\\  q_{n+1}\leq q_{n+1,l} = s_n \leq q_n
	                \end{array}\right.
\end{align}
and thus 
\begin{align}\label{seq}
{\rm Hght}^*(P_0)\geq {\rm Hght}(P_0) \geq s_0= {\rm Hght}^*(P_1)\geq {\rm Hght}(P_1) \geq s_1 = {\rm Hght}^*(P_2)\geq% s_2\dots 
\\
\dots\geq s_{n-1}={\rm Hght}^*(P_n)\geq {\rm Hght}(P_n) \geq s_n= {\rm Hght}^*(P_{n+1})\cdots
\end{align}
Notice that for all $n$, ${\rm Hght}(P_n)$ and $s_n$ must be positive integers. 
Otherwise, if ${\rm Hght}(P_n)=0$ then $\mathcal N(P_n)$ has no compact edge and thus no root; 
if $s_n=0$, then $\mathcal N(P_n)$ has no root. In both situations, 
the chain ends at the $n$-stage, which contradicts to our assumption. 

Since (\ref{seq}) is an infinite sequence and ${\rm Hght}^*(P_0)$ is a finite positive number, 
there is a least integer $n_0\in \mathbb N$ such that for all $n\geq n_0$ one has
\begin{align}\label{*}
{\rm Hght}^*(P_n)={\rm Hght}(P_n)=s_n= {\rm Hght}^*(P_{n_0})={\rm Hght}(P_{n_0}) = s_{n_0} >0.
\end{align} 
This implies for \textbf{every} $n\geq n_0$:\\
(i) $\mathcal N(P_n)$ has only one compact edge $E_n$,\\
(ii) in this edge $E_n$, $P_n(x_n,y_n)$ has only one root $r_n$ of order $s_n=s_{n_0}$, \\
(iii) when $P_n$ is restricted to $E_n$, $P_{n,E_n}(x_n,y_n) =c_n(y_n-r_nx_n^{m_n})^{s_{n_0}}$, for some non-zero constant $c_n$. 

This is the exact pattern we are looking for, which suggests $P_{n_0}(x_{n_0},y_{n_0})$ has a factor 
$$
y_{n_0} - \sum\limits_{n=n_0}^{\infty} r_nx_{n_0}^{m_{n_0}+m_{n_0+1}+\cdots+m_n},
$$
whose order is equal to $s_{n_0}$. 
In Example \ref{EX08}, after the change of variable $y=x_1+x_1y_1$, the function $P_1(x_1,y_1)$ has only one compact edge, 
which is factored as $(y_1-x_1)^2$. From $n_0=1$, if we keep doing change of variables $y_{n-1} =x_{n} +x_ny_{n}$, we will have 
$$
P_{n,E_n}(x_n,y_n) =c_n(y_n-x_n)^{2}. 
$$
Indeed, in this example $P_1(x_1,y_1)$ has a factor $(y_1-\sum_{n=1}^\infty x_1^n)$ of order exactly 2.

 The following lemma shows that the chain (\ref{chain}) essentially ends at the $(n_0+1)$-stage. 
\begin{lemma}\label{infty}
Assume we have an infinite chain (\ref{chain}) and $n_0$ is the constant defined in (\ref{*}). Then there is a positive integer $M$ such that 
\begin{align}\label{**}
P_{n_0}(x_{n_0},y_{n_0}) =x_{n_0}^{p_{n_0}} (y_{n_0}-f(x_{n_0}))^{s_{n_0}}Q_{n_0}(x_{n_0},y_{n_0})
\end{align}
where
\begin{align}
f(x_{n_0})= \sum\limits_{n=n_0}^{\infty} r_nx_{n_0}^{m_{n_0}+m_{n_0+1}+\cdots+m_n}
\end{align}
is an analytic function of $x_{n_0}^{1/M}$ and 
$
Q_{n_0}(x_{n_0},y_{n_0})
$
is an analytic function of $(x_{n_0}^{1/M},y_{n_0})$ with 
$
Q_{n_0}(0,0) \neq 0. 
$
\end{lemma}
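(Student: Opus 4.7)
The plan is to exploit the rigid structure of the stable pattern from stage $n_0$ onward to extract the persistent Puiseux root $y = f(x)$ and factor $P_{n_0}$ accordingly.

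First, I would transcribe (\ref{*}) into concrete analytic form. For every $n \geq n_0$, the Newton polyhedron $\mathcal{N}(P_n)$ consists of just the two vertices $(p_n, s_{n_0})$ and $(p_n + m_n s_{n_0}, 0)$ joined by the single edge $E_n$, and condition (iii) forces
\begin{equation*}
P_n(x_n, y_n) = c_n x_n^{p_n}(y_n - r_n x_n^{m_n})^{s_{n_0}} + R_n(x_n, y_n),
\end{equation*}
where every $(p,q)$-exponent appearing in $R_n$ satisfies $p + m_n q > p_n + m_n s_{n_0}$. Composing the substitutions $y_k = (r_k + y_{k+1}) x^{m_k}$ for $k = n_0, \ldots, n-1$ (with principal variable $x = x_{n_0} = \cdots = x_n$) yields
\begin{equation*}
y_{n_0} = f_n(x) + y_n x^{m_{n_0} + \cdots + m_{n-1}}, \qquad f_n(x) = \sum_{k=n_0}^{n-1} r_k x^{m_{n_0}+\cdots+m_k}.
\end{equation*}
Since each $m_k \in \tfrac{1}{M}\mathbb{N}$ is strictly positive, the exponents in $f_n$ strictly increase by at least $1/M$ per step, so the partial sums converge to a formal Puiseux series $f(x) = \sum_{k \geq n_0} r_k x^{m_{n_0}+\cdots+m_k}$ in $x^{1/M}$.

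Plugging $y_n = 0$ into the identity $P_{n_0}(x, y_{n_0}) = P_n(x, y_n)$ then gives
\begin{equation*}
P_{n_0}(x, f_n(x)) = P_n(x, 0) = c_n(-r_n)^{s_{n_0}} x^{p_n + m_n s_{n_0}} + O(x^{p_n + m_n s_{n_0} + \nu}),
\end{equation*}
so $f_n$ is an approximate root of $P_{n_0}$ to order $p_n + m_n s_{n_0} \to \infty$. The principal obstacle is upgrading this formal convergence to genuine analyticity of $f$ as a function of $x^{1/M}$ near the origin. For this I would combine the high-order vanishing above with Cauchy estimates on $P_{n_0}$ over a fixed bidisk $\{|x|, |y| \leq \rho\}$ to deduce a geometric growth bound $|r_k| \leq C^k$, ensuring that $f$ has a positive radius of convergence in $x^{1/M}$.

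Finally, once $f$ is analytic one has $P_{n_0}(x, f(x)) \equiv 0$. Because each stable edge $E_n$ has height exactly $s_{n_0}$ and $r_n$ is a root of the edge polynomial of multiplicity exactly $s_{n_0}$, the multiplicity of $f$ as a zero of $P_{n_0}(x, \cdot)$ is exactly $s_{n_0}$; hence $(y - f(x))^{s_{n_0}}$ divides $P_{n_0}$ in the ring of convergent power series in $(x^{1/M}, y)$ while $(y - f(x))^{s_{n_0}+1}$ does not. The factor $x^{p_{n_0}}$ splits off because the leftmost vertex of $\mathcal{N}(P_{n_0})$ is $(p_{n_0}, s_{n_0})$, so $x^{p_{n_0}}$ is the minimum power of $x$ occurring in $P_{n_0}$. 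Writing $P_{n_0}(x, y) = x^{p_{n_0}}(y - f(x))^{s_{n_0}} Q_{n_0}(x, y)$ with $Q_{n_0}$ analytic, evaluating at $y = 0$ and matching leading $x$-coefficients against the expansion from the first paragraph yields $Q_{n_0}(0, 0) = c_{n_0} \neq 0$, which completes the proof.
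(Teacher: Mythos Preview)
Your proposal is correct and follows essentially the same route as the paper: use the stable edge pattern to show that the partial sums $f_n$ are approximate roots of $P_{n_0}$ to order tending to infinity, conclude that $f$ is an exact root, and then factor.

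Two minor differences in execution are worth noting. First, you explicitly flag convergence of $f$ as the principal obstacle and propose Cauchy estimates to bound $|r_k|$; the paper, by contrast, passes over convergence with the remark that ``it is not difficult to see'' the terms are analytic in $(x_{n_0}^{1/M},y_{n_0})$. The cleanest route here is actually Weierstrass preparation applied to $P_{n_0}/x_{n_0}^{p_{n_0}}$, which is $y$-regular of order $s_{n_0}$, together with the classical fact that Puiseux branches of convergent series converge; your Cauchy-estimate idea can be made to work but needs more care than you indicate. Second, for pinning down the multiplicity $s=s_{n_0}$ and $Q_{n_0}(0,0)\neq 0$, the paper computes the leading order of $P_{n_0}(x,f_{n-1}(x))$ in two ways---once as $P_n(x_n,0)\sim x^{p_{n_0}+s_{n_0}(m_{n_0}+\cdots+m_n)}$ and once from the trial factorization $x^{p_{n_0}}(y-f)^{s}Q$ with $Q(x,f(x))=Cx^A+\cdots$---and matches exponents as $n\to\infty$ to force $s=s_{n_0}$, $A=0$, $C\neq 0$ simultaneously. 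You instead separate these: the multiplicity is asserted from the stable edge heights (correct, but less explicit), and $Q_{n_0}(0,0)=c_{n_0}$ is read off at $y=0$. Both reach the same conclusion; the paper's comparison argument is slightly more self-contained.
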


\begin{proof}
To obtain $P_{n_0}(x_{n_0},y_{n_0})$ from $P_0(x,y)$, we have iterated only finitely many steps. Thus $P_{n_0}(x_{n_0},y_{n_0})$ is a real analytic function of $(x_{n_0}^{1/M},y_{n_0})$, for some $M\in\mathbb N$. 
For $n\geq n_0$, the change of variables from $[U_{n},P_{n}]$ to $[U_{n+1},P_{n+1}]$ is $x_n=x_{n+1}$ and $y_n=(y_{n+1}+r_n)x^{m_n}$. The only compact edge $E_n$ of $P_n$ is of the form 
\begin{align}\label{touse}
P_{n,E_n}(x_n,y_n)=c_nx_n^{p_{n}}(y_n-r_nx^{m_n})^{s_n},
\end{align} 
where $c_n$ is a nonzero constant and $s_n=s_{n_0}$. Using induction, it is not difficult to prove that $m_nM$ is an integer for all $n\geq n_0$. Thus $P_n(x_n,y_n)$ is a real analytic function of $(x_n^{1/M},y_n)$ for all $n\in\mathbb N$.
The Weierstrass Preparation Theorem implies 
\begin{align}\label{WPT}
P_{n_0}(x_{n_0},y_{n_0}) = x_{n_0}^{p_{n_0}}Q_{n_0}(x_{n_0},y_{n_0}) R(x_{n_0},y_{n_0}),
\end{align}
 where $Q_{n_0}(x_{n_0},y_{n_0}) $ is a non-vanishing real analytic function of $(x_{n_0}^{1/M},y_{n_0})$ and 
 \begin{align*}
  R(x_{n_0},y_{n_0})=y^{s_0}_{n_0}+a_{s_0-1}(x_{n_0})y^{s_0-1}+a_{s_0-2}(x_{n_0})y^{s_0-2}+\dots+a_{0}(x_{n_0})
 \end{align*}
  is a Weierstrass polynomial, i.e. a polynomial in $y$ with analytic coefficients in $x_{n_0}^{\frac 1 M}$. 
  Our goal is to verify the following factorization
  \begin{align}\label{WP89}
   R(x_{n_0},y_{n_0}) = (y_{n_0} - f(x_{n_0}))^{s_{n_0}}
  \end{align} 
  in the sense of formal power series.  
  Notice that this also implies that $ f(x_{n_0})$ is a real analytic function of $x_{n_0}^{\frac 1 M}$ for it equals $ -\frac {a_{s_0-1}(x_{n_0})}{s_{n_0}}$.   Therefore, (\ref{WP89}) is true pointwise in some small neighborhood of the origin.

 First, notice $(p_n+s_nm_n,0)$ is the rightmost vertex of $\mathcal N(P_n)$. By setting $y_n=0$, (\ref{touse})  yields
\begin{align}\label{--}
P_n(x_n,0) = C_n x_n^{p_{n}+s_nm_n}+O(x_n^{p_{n}+s_nm_n+\nu}), 
\end{align}
where $\nu>0$. % is positive. % has $x_n$ degree strictly larger than $(p_{n}+s_nm_n)$. 
%\begin{align}\label{gd}
%P_n(x_n,y_n) \sim x_n^{p_{n}+s_nm_n} \q\mbox{for} \q (x_n,y_n)\in U_{n,g}
%\end{align}
Consider the partial sum of $f(x_{n_0})$, 
\begin{align}\label{-}
f_k(x_{n_0})= \sum\limits_{n=n_0}^{k} r_nx_{n_0}^{m_{n_0}+m_{n_0+1}+\cdots+m_n}, \q k\geq n_0.
\end{align}
Then $y_{n_0}=y_nx_{n_0}^{m_{n_0}+m_{n_0+1}+\cdots+m_{n-1}}+f_{n-1}(x_{n_0})$, $n\geq n_0+1$ 
and
\begin{align}
P_n(x_n,y_n)=P_{n_0}(x_n,y_nx_{n_0}^{m_{n_0}+m_{n_0+1}+\cdots+m_{n-1}}+f_{n-1}(x_{n_0})).
\end{align}
By (\ref{--}), we have 
%\textbf{Check below $n$ or $n-1$ !!!!!!!!!!!}
\begin{align}\label{speed}
P_{n_0}(x_{n_0},f_{n-1}(x_{n_0}))=P_n(x_n,0) =C_n x_n^{p_{n}+s_nm_n}+O(x_n^{p_{n}+s_nm_n+\nu}).
%\to 0\,\, as \,\, n\to\infty,
\end{align}
Notice $m_n\geq \frac{1}{M}$ and $s_n=s_{n_0}$ is a positive integer, thus 
$$p_{n}+s_nm_n\to \infty\q\q \mbox{ as}\q\q  n\to \infty,$$ 
which implies (in the sense of formal power series) 
\begin{align}
P_{n_0}(x_{n_0},f(x_{n_0}))=0 =R(x_{n_0},f_{n_0}(x_{n_0})),
\end{align}
since $Q_{n_0}(x_{n_0},y_{n_0}) $ is non-vanishing. 
This yields $R_{n_0}(x_{n_0},y_{n_0})$ has a factor $(y_{n_0}-f(x_{n_0}))$. To show its order is exactly $s_{n_0}$, assume 
\begin{align}\label{sp07}
R(x_{n_0},y_{n_0}) = (y_{n_0}-f(x_{n_0}))^{s}\bar R(x_{n_0},y_{n_0}),
\end{align}
where $\bar R_{n_0}(x_{n_0},f(x_{n_0}))\neq 0$. Write
$$
\bar R(x_{n_0},y_{n_0}) =\bar R(x_{n_0},f(x_{n_0}))+\bigg(\bar R(x_{n_0},y_{n_0})-\bar R(x_{n_0},f(x_{n_0}))\bigg)
$$
%and $\big(\bar R(x_{n_0},y_{n_0})-\bar R(x_{n_0},f(x_{n_0}))\big)$
and the latter term is divisible by $(y_{n_0}-f(x_{n_0}))$. 
Assume the leading term of $\bar R(x_{n_0},f(x_{n_0}))$ is $C x_{n_0}^{A}$ (the nonzero term with lowest degree). 
Then 
$$
\bar R(x_{n_0},f_{n-1}(x_{n_0})) =Cx_{n_0}^A+O(x_{n_0}^{A+\nu}) \q\q\mbox{as}\q\q n\to\infty
$$
for some $\nu >0$. Combining (\ref{sp07}) and (\ref{WPT}), one has
\begin{align}\label{bb8}
P_{n_0}(x_{n_0},f_{n-1}(x_{n_0})) =Cx_{n_0}^{p_{n_0}+s(m_{n_0}+\dots+m_n)+A}+
O(x_{n_0}^{p_{n_0}+s(m_{n_0}+\dots+m_n)+A+\nu}),%\q\mbox{as}\q n\to\infty.
\end{align}
%Here we have invoked the fact $p_{k+1}=p_k+sm_k$ for $k\geq n_0$. 
as $n\to \infty$. Notice that, for all $n> n_0$, 
\begin{align}
p_n=p_{n-1}+s_{n-1}m_{n-1}=p_{n-1}+s_{n_0}m_{n-1}.
\end{align}
Iterating this identity yields  
\begin{align}
p_n+s_nm_n=p_{n_0}+s_{n_0}(m_{n_0}+\dots+m_n).
\end{align}
Comparing (\ref{speed}) and (\ref{bb8}) yields 
\begin{align*}
\begin{cases}
A=0
\\
s=s_0
\end{cases}
\end{align*}
as desired. 

\end{proof}

\begin{remark}
The purpose of employing the Weierstrass Preparation Theorem (WPT) here is to verify the analyticity of 
$f(x_{n_0})$, but its use is not essential here. There should be other methods. 
For example, the referees kindly point out that one can employ the IFT instead.
%Basically, one takes the $(s_0-1)$ $y_0$-derivatives of $\f {P_{n_0}(x_{n_0},y_{n_0}) }{x_{n_0}^{p_0}}$ 
%
I find the use of the WPT here is quite interesting, 
for $(y_{n_0}-f(x_{n_0}))^{s_0}$ is indeed the Weierstrass polynomial of $P_{n_0}(x_{n_0},y_{n_0})$.  
One may be able to use the algorithm in this paper to compute the Weierstrass polynomial of the original function $P(x,y)$. 
\end{remark}

Based on Lemma \ref{infty}, in the $n_0$-stage, we refine the change of variables as follows
\begin{align}\label{change}
x_{n_0} = x_{n_0+1}\q\mbox {and} \q y_{n_0}-f(x_{n_0}) = y_{n_0+1}x_{n_0}^{m_{n_0}}.
\end{align}
This will help us to eliminate all the nonzero roots of the new function $P_{n_0+1}$,
and thus eliminate the `bad' regions. 
Indeed, by (\ref{**}) one has
\begin{align}\label{***}
P_{n_0+1}(x_{n_0+1},y_{n_0+1}):&=P_{n_0}(x_{n_0+1}, y_{n_0+1}x_{n_0+1}^{m_{n_0}}+f(x_{n_0+1})) 
\\
&=x_{n_0+1}^{p_{n_0}+s_{n_0}m_{n_0}} y_{n_0+1}^{s_{n_0}} Q_{n_0+1}(x_{n_0+1},y_{n_0+1})
\end{align}
where 
$$
Q_{n_0+1}(x_{n_0+1},y_{n_0+1})= Q_{n_0}(x_{n_0+1},y_{n_0+1}x_{n_0+1}^{m_{n_0}}+f(x_{n_0+1})),
$$
is non-vanishing near the origin.  
One can see that $\mathcal N(P_{n_0+1})$ has only one vertex and $P_{n_0+1}(x_{n_0+1},y_{n_0+1})$ behaves like a monomial.
 Set 
$$U_{n_0+1} = \{(x_{n_0+1}, y_{n_0+1}): (x_{n_0},y_{n_0})\in U_{n_0,b} \},$$ 
then $U_{n_0+1,g} =U_{n_0+1}$, $U_{n_0+1,b} =\emptyset$ and the procedure ends here. 
Thus if we take $N_P$ to be the maximum of all such $n_{0}+1$ from all branches of iterations, 
then the resolution algorithm ends at the $N_P$-stage. 
Set
%$\mathcal G_n = \cup_{P_n} \mathcal G_n(P_n)$, 
\begin{align}\label{goodn}
&\mathcal G_n =\cup_{\bsb\alpha}\cup_j \{U_{n,g,\bsb\alpha,j}\},
\\
&\mathcal V_n = \cup_{\bsb\alpha} \mathcal V(P_{n,\bsb\alpha}),
\\
&\mathcal E_n = \cup_{\bsb\alpha} \mathcal E(P_{n,\bsb\alpha})
\end{align}
which represent the `good' regions, vertices and compact edges in the $n$-th stage respectively. The followings represent all the `good' regions, vertices and compact edges in all stages:
%where the union is taken over all possible $P_n$ in the $n$-th stage 
\begin{align}\label{goodall}
&\mathcal G =\cup_{0\leq n\leq N_P}\mathcal G_{n}=\cup_{ n}\cup_{\bsb\alpha}\cup_j \{U_{n,g,\bsb\alpha,j}\},
\\
&\mathcal V = \cup_{0\leq n\leq N_P}\mathcal V_n = \cup_{n}\cup_{\bsb\alpha} \mathcal V(P_{n,\bsb\alpha}),
\\
&\mathcal E =\cup_{0\leq n\leq N_P} E_n= \cup_{ n }\cup_{\bsb\alpha} \mathcal E(P_{n,\bsb\alpha}).
\end{align}
%It is clear now the `bad' regions defined by an edge $E\in\mathcal E(P)$, are essen
For $n\geq 1$, we often refer to $U_{n,g,\bsb\alpha,j}$ as a `good' region
defined by an edge of $E\in\mathcal E(P)$ in higher stage of iteration. 
Thus each `good' region is either defined by a vertex $V\in\mathcal V(P)$ or by an edge $E\in\mathcal {E}(P)$, or 
simply say defined by a face $F\in\mathcal F(P)$.

We can now determine $\epsilon_0$, the size of the original region $U_0$. 
One needs to go backward, starting from every leaf of the tree, i.e. every `good' region to the top. 
Each such path (from a chosen leaf to the top of the tree) will give rise to an upper bound of $\epsilon_0$ and
 one can just take $\epsilon_0$ to be any number smaller than all those bounds.  
We briefly describe how to do it. 
Notice first that all the roots $r_E$'s from different edges of different stages of iterations are independent from
sizes of the regions $U_n$'s, so are all assigned constants $c_E$'s and $C_E$'s defined in (\ref{chosen35}). For convenience, 
all the $\{\epsilon_n\}$ and $\{\epsilon_n'\}$ chosen below are less than $2^{-10} c_E$ for all $E$.    
\begin{enumerate}
\item Take one unchosen leaf from the tree. We use $[U_{n_0+1}, P_{n_0+1}]$ in the above case as our example here. 
Recall that $\epsilon_{n_0+1}$ denotes the size of $U_{n_0+1}$. 
\item There is an of upper bound $\epsilon_{n_0+1}'>0$, such that for every $\epsilon_{n_0+1}<\epsilon_{n_0+1}'$,   
the function $P_{n_0+1}$ behaves like a monomial in $U_{n_0+1}$. One can adjust the value of $\epsilon_{n_0+1}'$ 
to control the relative smallness of the error term. 
Choose one such $\epsilon_{n_0+1}$ and move up to $[U_{n_0}, P_{n_0}]$. 
\item Choose $0<\rho_{n_0}<\epsilon_{n_0+1}$ 
 and set  
$$
\xi_{n_0} =\inf\{|P_E(r)|: r\in I_{g}(E)\}
$$
where the infimum runs over all compact edges of $\mathcal N(P_{n_0})$ 
and $ I_{g}(E)$ is given in (\ref{domain}) which relies on $\rho_{n_0}$. % which also depends on $\rho_{n_0}$.
Then there is a $0<\epsilon_{n_0}'<\epsilon_{n_0+1}$ depending on $\rho_{n_0}, \xi_{n_0}$ and $P_{n_0}$,
such that for every $\epsilon_{n_0}<\epsilon_{n_0}'$, 
the function $P_{n_0}$ behaves like a monomial in all the `good' regions of $U_{n_0}$. 
Choose one such $\epsilon_{n_0}$ and move up to $[U_{n_0-1}, P_{n_0-1}]$.
\item Iterate (3) until we reach the top $[U_0,P_0]$, which will gives rise to one $\epsilon_0'$. 
\item Iterate (1), (2), (3) and (4) until there is no unchosen leaf. We then obtain finitely many $\epsilon_0'$'s.
Choose one $\epsilon=\epsilon_0$ less than any of them.
\end{enumerate}

\begin{theorem}\label{rs}
For each analytic function $P$ defined in a neighborhood of $0\in\mathbb R^2$, 
there is a standard region $U$,  % be a standard region whose size $\epsilon$ is sufficiently small. 
which can be partitioned into a finite collection of `good' regions 
$
\{\rho_{n}^{-1}(U_{n,g})\cap U,
U_{n,g}\in\mathcal G\}
$
where 
$$
\rho_n^{-1}(x_n,y_n)=(x,y) = (x_0,y_0)
$$
%is defined inductively by 
%\begin{align}\label{cc31}
%\begin{cases}
% x_k=x_{k+1} \\ 
 %y_k= (r_{k}+y_{k+1})x_{k+1}^{m_{k}}
 %\end{cases}
%\end{align}
%for $k=0,\dots, n-1$ via the chain
is given by 
\begin{align}\label{change08}
\begin{cases}
 x_n=x_{0}=x \\ 
 y_n= \gamma_n(x) +y_n   x^{m_0+\dots+m_{n-1}} %(r_{k}+y_{k+1})x_{k+1}^{m_{k}}
 \end{cases}
\end{align}
where $\gamma_n(x)$ is defined as follows. If $[U_n,P_n]$ lies in an infinite chain 
\begin{align}\label{subchain02}
[U_0,P_0]\to[U_1,P_1]\to \dots \to [U_n,P_n]\to [U_{n+1},P_{n+1}]\to\dots
\end{align}
 and 
 $n-1=n_0$ for some $n_0$ defined as in (\ref{*}), then 
 \begin{align}\label{gamma1x}
 \gamma_n(x) = \sum\limits_{k=0}^{\infty} r_kx_{0}^{m_{0}+m_{1}+\cdots+m_k},
 \end{align}
which converges for $x$ with $(x,y)\in U$; otherwise 
 \begin{align}\label{gamma2x}
 \gamma_n(x) = \sum\limits_{k=0}^{n-1} r_kx_{0}^{m_{0}+m_{1}+\cdots+m_k}.
 \end{align}
%\begin{align}\label{subchain0}
%[U_0,P_0]\to[U_1,P_1]\to \dots \to [U_n,P_n].
%\end{align}
%However, if (\ref{subchain0}) is a subchain of an infinite chain:
%\begin{align}\label{subchain02}
%[U_0,P_0]\to[U_1,P_1]\to \dots \to [U_n,P_n]\to [U_{n+1},P_{n+1}]\to\dots
%\end{align}
% and 
 %$n-1=n_0$ for some $n_0$ defined as in (\ref{*}), then the last step of the change of variables is redefined by
 %\begin{align}\label{change09}
 %\begin{cases}
%x_{n_0} = x_{n_0+1}
%\\
 %y_{n_0}-f(x_{n_0}) = y_{n_0+1}x_{n_0}^{m_{n_0}}
%\end{cases}
%\end{align}
%where
%\begin{align}\label{ddd00}
%f(x_{n_0})= \sum\limits_{k=n_0}^{\infty} r_kx_{n_0}^{m_{n_0}+m_{n_0+1}+\cdots+m_k}.
%\end{align}
 %via the chain.
%${\rm Hght}^*(P_k)$
We assume also that the order of $r_k$ is $s_k$. 
 For $0\leq k \leq n$, let $(p_{k,l},q_{k,l})$ be the leftmost vertex of $\mathcal N(P_k)$. 
For $0\leq k\leq n-1$, 
  let $(p_{k},q_{k})$ be the left vertex of the edge that defines $U_{k,b}\subset U_k$,
and $(p_{n},q_{n})$ be defined as follows: 
if $U_{n,g}$ is defined by a vertex $V$, then $(p_{n},q_{n})=V$; 
else $(p_{n},q_{n})$ is the left vertex of the edge that defines $U_{n,g}$. 
One has 
\begin{align}\label{ind1}
q_n\leq q_{n,l} \leq s_{n-1}\leq q_{n-1}\leq q_{n-1,l} \dots \leq q_1\leq s_0\leq q_0.
\end{align}
For every supporting line $L_{m_n}\in \mathcal {SL}(P_n)$ through $(p_n,q_n)$, one also has 
\begin{align}\label{ind2}
p_n +m_nq_n \leq p_0+m_0q_0+m_1q_{1,l}\dots+m_{n}q_{n, l }  \leq p_0+q_0m_0+s_0(m_1+\dots +m_{n}).
\end{align}
In addition, for any given $L\in \mathbb N$, for all $0\leq \alpha, \beta\leq L$ and $(x,y)=\rho_n^{-1}(x_n,y_n)\in \rho_{n}^{-1}(U_{n,g})\cap U$ one has
\begin{align}
|P(x,y)| = |P_n(x_n,y_n)| \sim |x_n^{p_{n}}y_n^{q_{n}}| \label{key1}
\\
|\partial_{x_n}^{\alpha} \partial_{y_n}^{\beta}P_n(x_n,y_n)| \lesssim  \min\{1, | x_n^{p_{n}-\alpha}y_n^{q_{n}-\beta} |\}\label{key2}
\end{align}
and
\begin{align}\label{new}
%|\partial_{x}^{\alpha} \partial_{y}^{\beta}P(x,y)|  \lesssim \min\{1, |x^{p_{n}-\alpha - \beta(m_0+\dots +m_{n-1})}y_n^{q_{n}-\beta}|\} .
| \partial_{y}^{\beta}P(x,y)|  \lesssim \min\{1, |x^{p_{n} - \beta(m_0+\dots +m_{n-1})}y_n^{q_{n}-\beta}|\} .
\end{align}
 
\end{theorem}
\begin{remark}

\begin{enumerate}[(i)]
\item 
This theorem also works for $U= (-\epsilon,\epsilon)\times (-\epsilon,\epsilon)$ (with a smaller $\epsilon$) and thus any open subset of $U$. 
First of all, we can apply this theorem to a standard region in the left half plane, i.e. to $U=(-\epsilon, 0)\times (-\epsilon, \epsilon)$ by setting $x=-x$. 
Secondly, one can incorporate the $y$-axis into the `good' regions defined by the leftmost vertex of $\mathcal N(P)$; see
￼￼(\ref{corner001}) for the case of incorporating the $x$-axis. % into the `good' regions defined by the rightmost vertex. 
%Therefore, this theorem applies to $U= (-\epsilon,\epsilon)\times (-\epsilon,\epsilon)$, as well as any open subset of $U$. 
\item
In application, the estimate (\ref{key1}) is often used in conjunction with (\ref{ind1}) and (\ref{ind2})
 to relate the lower bound of $|P(x,y)|$ in `good' regions from higher stages of iterations to the original Newton polyhedron of $P$.
 The estimates in (\ref{ind1}) and (\ref{ind2}) contain more details than are needed for the proof of Theorem \ref{main}. 
 For example the middle term of (\ref{ind2}) is not used explicitly in this proof. 
 We include such details for the purpose of future reference, 
 since we may need strong information from higher stages of iterations. 
\end{enumerate}
\end{remark}
\begin{proof}
The partition in the theorem is a consequence of the algorithm. 
The region $\rho_{n,g}^{-1} (U_{n,g})$ does not necessarily all lie in $U$ and  
hence we only need to take the portion in $U$, i.e. $\rho_{n,g}^{-1} (U_{n,g})\cap U$.
The change of variables (\ref{change08}) is obtained by iterating 
\begin{align}\label{cc31}
\begin{cases}
 x_k=x_{k+1} \\ 
 y_k= (r_{k}+y_{k+1})x_{k+1}^{m_{k}}.
 \end{cases}
\end{align}
The estimate (\ref{ind1}) follows directly from (\ref{induction}) and (\ref{ind2}) is a consequence of iterating the following estimates  
$$
p_n+m_nq_n \leq p_{n,l}+m_nq_{n,l} =   \left (p_{n-1}+m_{n-1}q_{n-1} \right)+m_nq_{n,l},
$$
where first estimate comes from the fact that $(p_{n,l}, q_{n,l})$ lies on or above the supporting line $L_{m_n}$ 
and the identity is due to (\ref{induction}). 

In addition, (\ref{key1}) and(\ref{key2}) come from Lemma \ref{edge} and Lemma \ref{vertex}. 
Finally, (\ref{new}) is an outcome of (\ref{change08}) and the chain rule, since
$$
\partial y /\partial y_n =x^{m_0+\cdots +m_{n-1}}. 
$$
\end{proof}

\subsection{\large A smooth partition}\q\q

In the above theorem, $U$ is decomposed into disjoint `good' regions $\rho_n^{-1}(U_{n,g})$'s.
 However, when it comes to application (in analysis), an overlap version is often more suitable 
since it provides extra room to fit in a smooth partition. 
Furthermore, there is an extra benefit in our problem:
 this extra room can help us to overcome the
convexity assumption in Theorem \ref{local}. 
To be precise, the `good' regions are in general not convex, even if
they are localized to pieces with the $x$-supports being dyadic intervals.
If we take the convex hull of one such piece, as
needed in Theorem \ref{local}, the convex hull may lie in more than one `good' regions. 
If so, in such convex hull we may fail to find a uniform monomial to bound $|P(x,y)|$ from below. 
Fortunately, the algorithm here provides sufficient flexibility to adjust the boundary of `good' regions, which
allows us to cut each of them into a number of pieces so that each piece
\begin{enumerate} 
\item is small enough with the convex hull lying essentially in the same `good' region; and 
\item is large enough with the total number of pieces bounded above uniformly by a harmless constant; See Lemma \ref{c01}
and Lemma \ref{c02}. 
\end{enumerate}

For these reasons, we slightly enlarge $U_{n,g}$ to $U_{n,g}\subset U^*_{n,g}\subset U^{**}_{n,g}$ and 
$U_{n,b}$ to $U_{n,b}\subset U^*_{n,b} \subset  U^{**}_{n,b}$. 
The $U^*_{n,g}$'s are used to fit in a smooth partition and 
$ U^{**}_{n,g}$'s are used to address the convexity assumption.  
The first step is to enlarge $I(E)$, $I_g(E)$ and $I_b(E)$ as follows 
\begin{align}
I^*(E) = [\frac{1}{2}c_E, 2C_E],\q\q I^{**}(E) = [\frac{1}{4}c_E, 4C_E],
\end{align}
\begin{align}
I^*_g(E) =I^*(E) \setminus \left( \cup_{1\leq j\leq J_E}I^{\frac{\rho_0}{2}}_{j}(E)				\right),
\q
I^{**}_g(E) =I^{**}(E) \setminus \left( \cup_{1\leq j\leq J_E}I^{\frac{\rho_0}{4}}_{j}(E)				\right)
\end{align}
and
\begin{align}
I^*_b(E) = \cup_{1\leq j\leq J_E}I^{{2\rho_0}}_{j}(E)	,	
\q\q
I^{**}_b(E) = \cup_{1\leq j\leq J_E}I^{{4\rho_0}}_{j}(E).		
\end{align}
Notice that our choices of $\rho_0$ in (\ref{rho00}) ensures that for all $E$,
$\{I^{{4\rho_0}}_{j}(E)\}$ do not overlap. 
Then we can defined the enlarged `good' regions as 
\begin{align}\label{good1*}
U^*_{0,g}(E) = \{(x,y)\in U_0: y=rx^{m_E},\,\, r\in I_g^*(E) \}\, 
\end{align}
and
\begin{align}
U^{**}_{0,g}(E) = \{(x,y)\in U_0: y=rx^{m_E},\,\, r\in I_g^{**}(E) \}.
\label{good1**}
\end{align}
Both $U^*_{0,g}(E) $ and $U^{**}_{0,g}(E)$ consist of $(J_E+2)$ 
curved triangular regions $\{U^{*}_{0,g}(E,j)\}$ and $\{U^{**}_{0,g}(E,j)\}$ respectively.
 In addition, one has
\begin{align}
U^{}_{0,g}(E,j) \subset U^{*}_{0,g}(E,j)\subset U^{**}_{0,g}(E,j).
\end{align}
The `good' regions defined by a vertex are enlarged to: 
\begin{align}\label{good*}
U^*_{0,g}(V) =\{(x,y)\in U_0: \frac{C_{E_r}}{2}x^{m_{E_r}}< y<  2c_{E_l} x^{m_{E_l}}\},
\\
\label{good**}
U^{**}_{0,g}(V) =\{(x,y)\in U_0: \frac{C_{E_r}}{4}x^{m_{E_r}}< y<  4c_{E_l} x^{m_{E_l}}\},
\end{align}
and finally the enlarged `bad' regions are
\begin{align}
\label{badregion*}
U^*_{0,b}(E,j) = \{(x,y) \in U_0: (r_j-2\rho_0) x^{m_E} <y < (r_j+2\rho_0) x^{m_E}\},
\\
\label{bad**}
U^{**}_{0,b}(E,j) = \{(x,y) \in U_0: (r_j-4\rho_0) x^{m_E} <y < (r_j+4\rho_0) x^{m_E}\}.
\end{align}
For $n\geq 1$, $U^*_{n,g}$'s, $U^*_{n,b}$'s and $U^{**}_{n,g}$'s, $U^{**}_{n,b}$'s are defined similarly.
 Since $\rho_k$ can be chosen sufficiently small, the above definitions do not cause any conflict. 
We have: 
\begin{corollary}\label{rs*} 
Let $U$ and $P$ as in Theorem \ref{rs}. 
Then all the estimates in Theorem \ref{rs} hold with $U_{n,g}$ replaced by $U_{n,g}^{**}$.
In addition, $U$ is contained in the union of all $U_{n,g}^{**}$. 
\end{corollary}

For a smooth partition, we issue some technical problems first. 
Let $c$ be a positive constant far away from all the roots of $P_E(x,y)$ for all $E$, say $c>2^{10}C_E$ for $C_E$
 defined in (\ref{chosen35}) and for all $E\in \mathcal E(P)$. 
 Choose a constant $\epsilon=2^{-5}c$.
Then $y=\pm cx$ divides the plane into four regions: $R_1$, $R_2$, $R_3$ and $R_4$, 
which represents the East, North, West and South regions respectively.
 Let $\{\Psi_j\}_{1\leq j\leq 4}$ be smooth functions such that 
\begin{align}
1 =\sum_{j=1}^4\Psi_j(x,y),\q\q (x,y)\neq (0,0).
\end{align}
Here $\Psi_1(x,y)$ is supported in 
$$
R_1^{\epsilon} = \{(x,y): x>0, \,-(c+\epsilon)x<y<(c+\epsilon )x\}
$$ 
and $\Psi_1(x,y)=1$ if 
\begin{align}
-(c-\epsilon)x<y<(c-\epsilon )x.
\end{align}
In addition, $\Psi_1$ satisfies 
\begin{align}
|\partial_y^\beta \Psi_1(x,y)| \leq C_{\beta} |y|^{-\beta},\q  \mbox{for any} \q\beta\in \mathbb N.
\end{align} 
The other functions $\Psi_2$,$\Psi_3$ and $\Psi_4$ are defined similarly in the other regions. 

For a given analytic function $P(x,y)$,
let $W$ be a small neighborhood of $0$ so that one can apply Theorem \ref{rs} and Corollary \ref{rs*} to $[W, P]$. 
Let $\Phi(x,y)$ be a smooth function adapted to $W$, in the sense $\supp \Phi \subset W$ and $\Phi(x,y) =1$ if $ (2x,2y)\in W$. Then
\begin{align}
\Phi(x,y)=\Phi(x,y)\sum_{j=1}^4 \Psi_j(x,y), \q\q \mbox{for} \q (x,y)\neq (0,0). 
\end{align}
We focus on $\Phi\Psi_1$, since discussions for the others are similar. Let
$$
U =W\cap R_1^\epsilon%\cap \{(x,y): x>0, \,-(c+\epsilon)x<y<(c+\epsilon )x\},
$$
then $\Phi\Psi_1$ is supported in $U$.

We now apply the resolution algorithm % the above resolution of singularities 
to $P(x,y)$ in the region $U$, which gives rise to a collection of `bad' regions $\{U^{*}_{n,b,\bsb\alpha,j}\}_{(n,\bsb\alpha,j)}$. 
For a fixed $U^*_{n,b,\bsb\alpha,j}$,  $\rho^{-1}_{n,\bsb\alpha}(U^*_{n,b,\bsb\alpha,j})$ is equal to
%\begin{align}
 %U\cap \{(x,y): r_0x^{m_0}+\dots+r_{n-1}x^{m_0+\dots+m_{n-1}}+(r_n-2\rho_n)x^{m_0+\dots+m_{n}}<y
%\\
%<r_0x^{m_0}+\dots+r_{n-1}x^{m_0+\dots+m_{n-1}}+(r_n+2\rho_n)x^{m_0+\dots+m_{n}}\}.
%\end{align}
\begin{align}
 U\cap \{\gamma_n(x)+(r_n-2\rho_n)x^{m_0+\dots+m_{n}}<y
< \gamma_n(x)+(r_n+2\rho_n)x^{m_0+\dots+m_{n}}\},
\end{align}
where $\gamma_n(x)$ is given by (\ref{gamma1x}) or (\ref{gamma2x}). 
The assumption that $U$ is contained in the east region $ R_1^\epsilon$ ensure that all the $m_0$ is at least 1, i.e.
the power of the leading term of $\gamma_n(x)$ is at least 1. 

We can then define a smooth function $\Phi_{n,b,\bsb\alpha,j}$ supported in $\rho^{-1}_{n,\bsb\alpha}(U^*_{n,b,\bsb\alpha,j})$ 
and $\Phi_{n,b,\bsb\alpha,j}(x,y)=1$ if $(x,y)\in \rho^{-1}_{n,\bsb\alpha}(U_{n,b,\bsb\alpha,j})$. 
In addition, the following is true%derivatives condition is true
\begin{align}\label{dede}
|\partial_y^{\beta}\Phi_{n,b,\bsb\alpha,j}(x,y)|\leq C_{\beta}|x|^{-\beta (m_0+\dots+m_n)}
\q\forall \beta\in \mathbb N.
\end{align}
Then 
\begin{align}
\Phi(x,y)\Psi_1(x,y)\bigg(1-\sum_{\bsb\alpha}\sum_j \Phi_{0,b,\bsb\alpha,j}(x,y)\bigg)
\end{align}
can be written as 
\begin{align}
\sum_{\bsb\alpha}\sum_j\Phi_{0,g,\bsb\alpha,j}(x,y)
\end{align}
where each $ \Phi_{0,g,\bsb\alpha,j}(x,y)$ is supported in the `good' region $U^*_{0,g,\bsb\alpha,j}$. Similarly, 
\begin{align}
\Phi(x,y)\Psi_1(x,y)\bigg(\sum_{\bsb\alpha}\sum_j \Phi_{0,b,\bsb\alpha,j}(x,y)\bigg)\bigg(1-\sum_{\bsb\alpha}\sum_j \Phi_{1,b,\bsb\alpha,j}(x,y)\bigg)
\end{align}
can be written as 
\begin{align}
\sum_{\bsb\alpha}\sum_j\Phi_{1,g,\bsb\alpha,j}(x,y)
\end{align}
where $\Phi_{1,g,\bsb\alpha,j}$ is supported in $\rho_{1,\bsb\alpha}^{-1}(U^*_{1,g,\bsb\alpha,j})$. 
Then we iterate the above procedures until the end of the algorithm. Combining (\ref{dede}), we obtain a smooth partition version of Theorem \ref{rs}.

\begin{theorem}\label{rs_smooth}
Let $\Phi$, $\Psi_1$ and $P$ be as above. Then
\begin{align}
\Phi(x,y)\Psi_1(x,y) =\sum_n \sum_{\bsb\alpha}\sum_j \Phi_{n,g,\bsb\alpha,j}(x,y), 
\end{align}
where $ \Phi_{n,g,\bsb\alpha,j}(x,y)$ is a smooth function 
supported in $\rho_{n,\bsb\alpha}^{-1}(U^*_{n,g,\bsb\alpha,j})$ and 
where $\{U_{n,g,\bsb\alpha,j}\}$ is the collection of `good' regions as in Theorem \ref{rs}.
 The behaviors of $P(x,y)$ in `good' regions 
$\rho_{n,\bsb\alpha}^{-1}(U^{*}_{n,g,\bsb\alpha,j})\cap U$
 and $\rho_{n,\bsb\alpha}^{-1}(U^{**}_{n,g,\bsb\alpha,j})\cap U$ 
are the same as Corollary \ref{rs*}. 
Moreover, $\Phi_{n,g,\bsb\alpha,j}(x,y)$ satisfies the following derivative conditions:

$(1)$ If $U_{n,g,\bsb\alpha,j}$ is defined by an edge, 
then $\rho^{-1}_{n,\bsb\alpha}(U^*_{n,g,\bsb\alpha,j})$ is given by 
in a curved triangular region of the form 
%\begin{align}
%b_{n,g,\bsb\alpha,j} x^{m_0+\dots+m_n} \leq  y-(r_0x^{m_0}+\dots+r_{n-1}x^{m_0+\dots+m_{n-1}}) \leq B_{n,g,\bsb\alpha,j} x^{m_0+\dots+m_n}
%\end{align} 
\begin{align}
b_{n,g,\bsb\alpha,j} x^{m_0+\dots+m_n} \leq  y-\gamma_n(x) \leq B_{n,g,\bsb\alpha,j} x^{m_0+\dots+m_n}
\end{align} 
for some nonzero constants $b_{n,g,\bsb\alpha,j}$ and $B_{n,g,\bsb\alpha,j}$ with the same signs. 
Then 
\begin{align}\label{de01}
|\partial_y^{\beta}\Phi_{n,g,\bsb\alpha,j}(x,y)|\leq C_{\beta}|x|^{-\beta (m_0+\dots+m_n)},
\q\forall \beta\in \mathbb N.
\end{align} 
 $(2)$Otherwise, $U_{n,g,\bsb\alpha,j}$ is defined by a vertex.  The region $U_{n,g,\bsb\alpha,j}$ may lie only in the first quadrant, only in the fourth quadrant or in both of them. 
 In the first case,  
  $\rho^{-1}_{n,\bsb\alpha}(U^*_{n,g,\bsb\alpha,j})$ is given by
%\begin{align}
%b_{n,g,\bsb\alpha,j} x^{m_0+\dots+m_{n-1}+m_{n,r}} <  &y-(r_0x^{m_0}+\dots+r_{n-1}x^{m_0+\dots+m_{n-1}})
%\\&
%<B_{n,g,\bsb\alpha,j}  x^{m_0+\dots+m_{n-1}+m_{n,l}},
%\end{align} 
\begin{align}
b_{n,g,\bsb\alpha,j} x^{m_0+\dots+m_{n-1}+m_{n,r}} <  &y-\gamma_n(x)
<B_{n,g,\bsb\alpha,j}  x^{m_0+\dots+m_{n-1}+m_{n,l}},
\end{align} 
where $0\leq m_{n,l} < m_{n,r}< \infty$. 
In the upper portion of $ \rho^{-1}_{n,\bsb\alpha}(U^*_{n,g,\bsb\alpha,j})\backslash \rho^{-1}_{n,\bsb\alpha}(U_{n,g,\bsb\alpha,j})$, one has
\begin{align}
|\partial_y^{\beta}\Phi_{n,g,\bsb\alpha,j}(x,y)|\leq C_{\beta}|x|^{-\beta (m_0+\dots+m_{n-1}+m_{n,l})},
\q\forall \beta\in \mathbb N
\end{align}
and in the lower portion
\begin{align}
|\partial_y^{\beta}\Phi_{n,g,\bsb\alpha,j}(x,y)|\leq C_{\beta}|x|^{-\beta (m_0+\dots+m_{n-1}+m_{n,r})}
\q\forall \beta\in \mathbb N.
\end{align}
The second case is similar. In the last case, 
 $U^*_{n,g,\bsb\alpha,j}$ is defined by the rightmost vertex of $\mathcal N(P_{n,\bsb\alpha})$ and $\rho^{-1}_{n,\bsb\alpha}(U^*_{n,g,\bsb\alpha,j})$ is given by
%\begin{align}
%|y-(r_0x^{m_0}+\dots+r_{n-1}x^{m_0+\dots+m_{n-1}})|\lesssim x^{m_0+\dots+m_{n-1}+m_{n,l}},
%\end{align} 
\begin{align}
|y-\gamma_n(x)|\lesssim x^{m_0+\dots+m_{n-1}+m_{n,l}},
\end{align} 
one has
\begin{align}
|\partial_y^{\beta}\Phi_{n,g,\bsb\alpha,j}(x,y)|\leq C_{\beta}|x|^{-\beta (m_0+\dots+m_{n-1}+m_{n,l})}
\q\forall \beta\in \mathbb N.
\end{align}
\end{theorem}

%--------------------------------------------------------------------------------------------------------------------------------------------------------------------------------
%
%
%
%
%
%
%
%
%
%
%
%
%
%
%
%
%
%
%
%\include{proof} 
%
%
%
%
%
%
%
%
%
%
%
%
%
%
%
%
%
%
%
%--------------------------------------------------------------------------------------------------------------------------------------------------------------------------------

\section{Proof of Theorem \ref{main}}\label{pmain}

Set $P=\partial_x\partial_y(\partial_x-\partial_y)S$ 
and assume $a(x,y)$ is supported in $W/100$, where $W$ is a neighborhood of $0$ such that we can apply
Theorem \ref{rs} and Corollary \ref{rs*} to $[W, P]$. 
 Same as what was done in Theorem \ref{rs_smooth},
 we divide $W$ into 4 regions by lines $y=cx$ and $y=-cx$ and  
restrict our discussion in the east region
\begin{align}\label{AUK}
U=W\cap  \{(x,y): x>0, \,-(c+\epsilon)x<y<(c+\epsilon )x\},
\end{align}
since the others can be reduced to $U$ by either changing $x$ to $-x$ or permuting $x$ and $y$, or both. 
Let $\Psi_j(x,y)$ and $\Phi(x,y)$ be smooth functions as in Theorem \ref{rs_smooth}, then 
\begin{align}
a(x,y) =a(x,y)\Phi(x,y) \sum_{j=1}^4\Psi_j(x,y),\q{\rm for}\q (x,y)\neq (0,0)
\end{align} 
and 
\begin{align}
\Lambda_S(f_1,f_2,f_3) =\sum_{j=1}^{4}\Lambda_S^j(f_1,f_2,f_3)
\end{align}
where
\begin{align}
\Lambda_S^j(f_1,f_2,f_3) =\iint e^{i\lambda S(x,y)}a(x,y)\Phi(x,y)\Psi_j(x,y)f_1(x)f_2(y)f_3(x+y)dxdy.
\end{align}
We only focus on $j=1$. Theorem \ref{rs_smooth} yields: 
$$
a(x,y)\Phi(x,y)\Psi_1(x,y)=\sum\limits_{0\leq n\leq N_P} \sum\limits_{\bsb\alpha }\sum\limits_j a_{n,g,\bsb\alpha,j}(x,y)
$$
where 
\begin{align}\label{an}
a_{n,g,\bsb\alpha,j}(x,y) =  \Phi_{n,g,\bsb\alpha,j}(x,y)a(x,y).
\end{align}
Set 
\begin{align} \label{j-th}
\Lambda_{n,g,\bsb\alpha,j}(f_1,f_2,f_3) = \iint e^{i\lambda S(x,y)} f_1(x)f_2(y)f_3(x+y) a_{n,g,\bsb\alpha,j}(x,y)  dxdy,
\end{align}
then 
\begin{align}\label{sum00}
\Lambda^1_S(f_1,f_2,f_3)=\sum\limits_{0\leq n\leq N_P} \sum\limits_{\bsb\alpha }\sum\limits_j \Lambda_{n,g,\bsb\alpha,j}(f_1,f_2,f_3).
\end{align}
Since the summands in (\ref{sum00}) have only finitely many terms, it suffices to bound each of them separately.  
For simplicity, we use $\Lambda_{n,g}$ to represent $\Lambda_{n,g}=\Lambda_{n,g,\bsb\alpha,j}$ 
for some $\bsb\alpha$ and $j$.  Then Theorem \ref{main} follows from
\begin{theorem}\label{detailsthm009}
If $U_{n,g}$ is defined by $F\in\mathcal F(P)$, then 
\begin{align}
\|\Lambda_{0,g}\| \lesssim |\lambda|^{-\frac{1}{2(3+d_F)}}.
\end{align}
\end{theorem}
We split the proof into three cases (i) $n=0$ and $U_{0,g}$ 
is defined by an edge $E$, (ii) $n=0$ and $U_{0,g}$ is defined by a vertex $V$  and (iii) $n\geq 1$.
\begin{proposition}\label{p1}
If $U_{0,g}$ is defined by an edge $E$, then
\begin{align}
\|\Lambda_{0,g}\| \lesssim |\lambda|^{-\frac{1}{2(3+d_E)}}.
\end{align}
\end{proposition}
\begin{proposition}\label{p2}
If $U_{0,g}$ is defined by a vertex $V$, then 
\begin{align}
\|\Lambda_{0,g}\| \lesssim |\lambda|^{-\frac{1}{2(3+d_V)}}.
\end{align}
\end{proposition}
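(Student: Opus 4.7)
The plan is to perform a smooth double dyadic decomposition of $U_{0,g}(V)$ and, on each piece, balance the trilinear Schur bound of Lemma~\ref{schur} against the trilinear oscillatory bound of Theorem~\ref{local}. Write $V = (p_v, q_v)$ and let the adjacent edges have slope parameters $m_l < m_r$ (with $m_l = 0$ or $m_r = \infty$ when $V$ is extreme), so that $U_{0,g}(V) \subset \{C_{E_r} x^{m_r} < |y| < c_{E_l} x^{m_l}\}$. Refine the cutoff $\Phi_{0,g}$ of Theorem~\ref{rs_smooth} into a smooth dyadic partition $a_{0,g} = \sum_{(j,k)} a_{j,k}$ with $\supp a_{j,k} \subset R_{j,k} := \{|x| \sim 2^{-j},\, |y| \sim 2^{-k}\}$, where $(j,k)$ satisfies $m_l j + O(1) \leq k \leq m_r j + O(1)$ and $|\partial_x^\alpha \partial_y^\beta a_{j,k}| \lesssim 2^{j\alpha + k\beta}$.

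On each $R_{j,k}$, Lemma~\ref{vertex} delivers $|P(x,y)| \sim 2^{-(jp_v + kq_v)}$ and matching derivative bounds $|\partial_y^\beta P| \lesssim 2^{-(jp_v + kq_v)} \cdot 2^{k\beta}$; moreover $R_{j,k}$ is a rectangle in $(x,y)$ and hence convex, and passing if needed to the enlarged $U_{0,g}^{**}(V)$ of Corollary~\ref{rs*} preserves these estimates on the convex hull of the support of $a_{j,k}$. Theorem~\ref{local} (with $\mu = 2^{-(jp_v+kq_v)}$, $\delta_1 = 2^{-j}$, $\delta_2 = 2^{-k}$) then produces the oscillatory bound $\|\Lambda_{j,k}\| \lesssim \lambda^{-1/6} \cdot 2^{(jp_v+kq_v)/6}$, while Lemma~\ref{schur} gives the trivial bound $\|\Lambda_{j,k}\| \lesssim 2^{-\max(j,k)/2}$.

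Taking the minimum of the two on each piece and setting $m = k/j$, balancing along the ray of slope $m$ yields a per-ray estimate $\lambda^{-1/(2(3 + d_E(m)))}$, where $d_E(m) := \min(p_v + mq_v,\, p_v/m + q_v)$ is precisely the decay factor of the supporting line of $\mathcal N(P)$ through $V$ with slope $-1/m$. Since the admissible supporting lines at $V$ are exactly those with $m \in [m_l, m_r]$, by definition $\sup_{m \in [m_l, m_r]} d_E(m) = d_V$, and a short case check ($m_l \leq 1 \leq m_r$; $m_l > 1$; $m_r < 1$) confirms that the worst-case ray gives the claimed exponent. Summing over $(j,k)$: on one side of the balance curve, $2^{-\max(j,k)/2}$ decays geometrically, and on the other side, $\lambda^{-1/6} \cdot 2^{(jp_v + kq_v)/6}$ decays geometrically in the direction of decreasing $jp_v + kq_v$.

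The main obstacle I anticipate is controlling the summation in the degenerate subcases where $V$ lies on a coordinate axis (so one of $p_v, q_v$ is zero) and either $m_l = 0$ or $m_r = \infty$, where the oscillatory bound becomes independent of one of $j, k$ and the corresponding geometric series only marginally converges, threatening a spurious $\log \lambda$ factor. I expect to handle this via the interpolation $\min(A, B) \leq A^\theta B^{1-\theta}$ with $\theta$ tuned so that both summations converge strictly, then optimizing $\theta$ at the endpoint to recover $\lambda^{-1/(2(3+d_V))}$ exactly; the extreme configurations additionally require using the modified definition (\ref{corner001}) of $U_{0,g}(V)$, but the argument is otherwise identical.
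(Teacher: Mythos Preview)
Your overall strategy matches the paper's exactly: double dyadic decomposition in $(x,y)$, then balance the trilinear Schur bound (Lemma~\ref{schur}) against the oscillatory bound (Theorem~\ref{local}) on each piece. The paper streamlines the bookkeeping by first restricting to the ``East'' region $|y|\lesssim |x|$, which forces $m_l\ge 1$ after truncation and hence $\min(\sigma_1,\sigma_2)=\sigma_2$; then it simply splits the $\sigma_2$-sum at the single threshold $\lambda_2=|\lambda|^{-1/(3+d_V)}$, applying Schur below and the oscillatory bound above. Your ray-by-ray balancing with $d_E(m)=\min(p_v+mq_v,\,q_v+p_v/m)$ reaches the same endpoint but with more case analysis; the paper's reduction is worth adopting.

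There is, however, a real gap in your handling of the degenerate case $p_v=0$ (and symmetrically $q_v=0$). You correctly flag it, but your proposed interpolation $\min(A,B)\le A^\theta B^{1-\theta}$ cannot work here: when $p_v=0$, \emph{both} bounds $A_{j,k}=\lambda^{-1/6}2^{kq_v/6}$ and $B_{j,k}=2^{-k/2}$ are independent of $j$, so $A^\theta B^{1-\theta}$ is too, and the $j$-sum still contributes $\sim k\sim\log\lambda$ terms of equal size regardless of $\theta$. No choice of $\theta$ makes ``both summations converge strictly.'' The fix is not interpolation but \emph{almost orthogonality}: for distinct dyadic $\sigma_1$ the pieces have disjoint $x$-supports \emph{and} disjoint $(x+y)$-supports (since $|x+y|\sim|x|\sim\sigma_1$ in the East region), so Cauchy--Schwarz applied to $f_1$ and $f_3$ upgrades $\sum_{\sigma_1}$ to $\sup_{\sigma_1}$. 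This is exactly the mechanism the paper deploys in the proof of Proposition~\ref{p3} (Claim~\ref{ah} and the final paragraph treating $p_0=0$); you need to invoke it here as well.
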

\begin{proposition}\label{p3}
If $n\geq 1$ and $U_{n,g}$ is defined by an edge $E$, then 
\begin{align}
\|\Lambda_{n,g}\| \lesssim |\lambda|^{-\frac{1}{2(3+d_E)}}.
\end{align}

\end{proposition}
Heuristically, 
one can consider Proposition \ref{p1} as an estimate of $\Lambda_S$ in one supporting line, i.e. in $E\in\mathcal {SL}(P)$, 
and Proposition \ref{p2} as the sum/integral of such estimates over all supporting lines through $V$. 
%The proof of Proposition \ref{p1} indeed contains almost all the essence in that of others. 
Although the function $P_n(x_n,y_n)$ becomes more singular as $n$ increases, the region $U_{n,g}$ becomes nicer, 
which allows one to exploit certain orthogonality.  
The rest of this section is devoted to the proofs of the above Prospositions. 
\subsection{Proof of Proposition  \ref{p1}}  \q
%\begin{proof} [Proof of Proposition \ref{p1}.] \q\q
\\
Let $0<\sigma<1$ be a dyadic number and $\phi_{\sigma}(x)$ 
be a smooth function supported in $\frac{1}{2}\sigma<x < 2\sigma$, such that 
\begin{align}
\sum_{0<\sigma<1}\phi_{\sigma}(x) =1\q\mbox{for} \q   0<x<{1/10}. 
\end{align} 
Set 
\begin{align} \label{jj-th}
\Lambda_{0,g,\sigma_1,\sigma_2}(f_1,f_2,f_3) = \iint e^{i\lambda S(x,y)} f_1(x)f_2(y)f_3(x+y) a_{0,g}(x,y)  \phi_{\sigma_1}(x)\phi_{\sigma_2}(y)  dxdy,
\end{align}
where $\sigma_1$ and $\sigma_2$ are positive dyadic numbers 
and $a_{0,g} =a_{0,g,\bsb\alpha,j}$ for some $(\bsb\alpha,j)$. Notice that 
 $$
 \supp (a_{0,g}) \subset U_{0,g}^*,%= \{(x,y)\in U: |y|\sim |x|^m\}%(r-2\epsilon)x^m<y<(r+2\epsilon)x^m\}
 $$
which is a `good' region defined by $E$. Notice that $m=\mathcal M(E)\geq 1$ due to (\ref{AUK}). Thus 
\begin{align}\label{com7788}
|y|\sim x^{m} \q \mbox {and} \q\sigma_2\sim\sigma_1^{m}\leq \sigma_1.
\end{align}
This yields, for fixed $\sigma_1$, there are only finitely many choices of $\sigma_2$. 
We lose no generality in assuming, for a given $\sigma_1$, that $\sigma_2$ is fixed.  
To apply Theorem \ref{local}, we need to verify its assumptions. 
Let $K\in\mathbb N$ and equally divide the interval $(\sigma_1/2,2\sigma_1)$ 
into $K$ subintervals $\{I_k\}_{1\leq k\leq K}$ and set 
$$
U_{0,g,k}^*=\{(x,y)\in U^*_{0,g}\,: \,x\in I_k  \}.
$$
\begin{lemma}\label{c01}
There is a constant $K\in\mathbb N$ which is dependent of $\sigma_1$ and $\sigma_2$, such that  
\begin{align}
{\rm Conv}(U_{0,g,k}^*) \subset U^{**}_{0,g},
\end{align}
for all $1\leq k\leq K$. 
\end{lemma}
%Lemma \ref{convex} yields $convex(U_{0,g,\bsb\alpha,k}^*) \subset U^{**}_{0,g,\bsb\alpha}$. 
The proof of this lemma is postponed to the end of this section. Now
let $(p_l,q_l)$ be the left vertex of $E$. 
Then for every $(x,y)\in {\rm Conv}(U_{0,g,k}^*)\subset U^{**}_{0,g}$,
 %Lemma \ref{c01},
  Theorem \ref{rs} and Corollary \ref{rs*} yield,
\begin{align}\label{lo1}
|P(x,y)| \gtrsim |x|^{p_l}|y|^{q_l}\sim \sigma_1^{p_l}\sigma_2^{q_l},
\end{align}
and for $\beta=0,1,2$ 
\begin{align}\label{lo2}
|\partial_y^{\beta}P(x,y)|  \lesssim |x|^{p_l}|y|^{q_l-\beta}\sim \sigma_1^{p_l}\sigma_2^{q_l-\beta}.
\end{align}
Theorem \ref{rs_smooth} together with (\ref{an}) yields
\begin{align}\label{lo3}
|\partial_y^{\beta} a_{0,g}(x,y)| \lesssim \sigma_2^{-\beta}.
\end{align}
By invoking Theorem \ref{local}, one has
\begin{align}
\|\Lambda_{0,g,\sigma_1,\sigma_2,k}\| \lesssim |\lambda\sigma_1^{p_l}\sigma_2^{q_l}|^{-1/6},%\sim |\lambda\sigma_1^{p_l+mq_l}|^{-1/6}
\end{align} 
where $\Lambda_{0,g,\sigma_1,\sigma_2,k}(f_1,f_2,f_3)$ is given by 
$$
\iint e^{i\lambda S(x,y)} f_1(x)\Id_{I_{k}}(x)f_2(y)f_3(x+y) a_{0,g}(x,y) 
\phi_{\sigma_1}(x)\phi_{\sigma_2}(y)dxdy.
$$
Summing over $1\leq k\leq K$ ($K$ is a constant) yields:
\begin{align}\label{9090}
\|\Lambda_{0,g,\sigma_1,\sigma_2}\| \lesssim |\lambda\sigma_1^{p_l}\sigma_2^{q_l}|^{-1/6}.
%\sim |\lambda\sigma_1^{p_l+mq_l}|^{-1/6}
\end{align} 
Employing Lemma \ref{schur}, (\ref{com7788}) and combining (\ref{9090}), one obtains:

\begin{align}\label{combine}
\|\Lambda_{0,g,\sigma_1,\sigma_2}\| \lesssim
	\left\{ \begin{array}{rcl}
      |\lambda \sigma_1^{p_l}\sigma_2^{q_l}|^{-1/6}\sim | \lambda\sigma_2^{q_l+{p_l}/{m}}|^{-1/6},
	\\
	\\  \min\{\sigma_1,\sigma_2\}^{1/2}  =\sigma_2^{1/2}.
	                \end{array}\right.
\end{align}
Since for fixed $\sigma_2$, $\sigma_1$ is fixed, summing over $\sigma_2$ yields
\begin{align}
\bigg\|\sum\limits_{\sigma_1,\sigma_2}\Lambda_{0,g,\sigma_1,\sigma_2}\bigg\| \lesssim |\lambda|^{-\frac{1}{2(3+q_l+p_l/m)}} = |\lambda|^{-\frac{1}{2(3+d_E)}},
\end{align}
as desired. 

%\end{proof}

\subsection{Proof of Proposition  \ref{p2}}  \q
\\
Like the proof of Proposition \ref{p1}, insert the smooth support $\phi_{\sigma_1}(x)\phi_{\sigma_2}(y)$ into $\Lambda_{0,g}$. 
Set $V=(p,q)$ and assume $-1/m_l$ and $-1/m_r$ be the slopes of the edges left and right to $V$. 
Due to the assumption on $U$ (\ref{AUK}):
we may replace $m_l$ with 1 if $m_l<1$. Thus\begin{align}
  \infty\geq m_{r}> m_{l}\geq 1.
\end{align}
Notice that 
$$
\sigma_2^{1/m_r} \gtrsim \sigma_1 \gtrsim \sigma_2^{1/m_{l}} \gtrsim\sigma_2.
$$
Consider all $(\sigma_1,\sigma_2)$ with $\sigma_2 \lesssim \lambda_2:= |\lambda|^{-\frac{1}{3+q+p/m_{l}}}=|\lambda|^{-\frac{1}{3+d_V}}$. 
By Lemma \ref{schur} and the triangle inequality, we have
\begin{align}\label{bbb}
\Bigg\| \sum_{\sigma_2\lesssim \lambda_2}\bigg(\sum_{\sigma_1}\Lambda_{0,g,\sigma_1,\sigma_2}\bigg) \Bigg\|
\lesssim
\sum_{\sigma_2\lesssim \lambda_2} |\sigma_2|^{1/2}
\lesssim |\lambda_2|^{1/2}=  |\lambda|^{-\frac{1}{2(3+d_V)}}.%\leq |\lambda|^{-\frac{1}{2(3+d)}}.
\end{align}
Now assume $\sigma_2\gtrsim \lambda_2$. 
As it was done in the proof of Proposition \ref{p1},
we invoke the same trick to meet the convexity assumption of Theorem \ref{local}: 
splitting $\Lambda_{0,g,\sigma_1,\sigma_2}$ into the sum of $\Lambda_{0,g,\sigma_1,\sigma_2,k}$, 
applying Theorem \ref{local} to each $\Lambda_{0,g,\sigma_1,\sigma_2,k}$ 
and summing them together. This gives 

\begin{align}
\|\Lambda_{0,g,\sigma_1,\sigma_2}\| \lesssim |\lambda \sigma_1^{p} \sigma_2^{q}|^{-1/6}.\label{bbound}
\end{align}
Since $\sigma_1 \gtrsim \sigma_2^{1/m_{l}}$, thus
\begin{align}
\sum_{\sigma_1}\|\Lambda_{0,g,\sigma_1,\sigma_2}\| \lesssim |\lambda \sigma_2^{p/m_l+q}|^{-1/6}.
\end{align}
Summing all $\sigma_2\gtrsim \lambda_2$ we obtain the same bound as (\ref{bbb}).

\subsection{Proof of Proposition  \ref{p3}}  \q
\\

Assume the change of variables $(x,y)=\rho_k^{-1}(x_k,y_k)$ is given as in Theorem \ref{rs}, 
as well as the parameters $r_k$, $s_k$ and $(p_k,q_k)$ for $0\leq k\leq n$. 
Theorem \ref{rs} and Corollary \ref{rs*}  yield
\begin{align}
|P(x,y)| =|P_n(x_n,y_n)|\sim  |x_n^{p_n}y_n^{q_n}|
\end{align}
for all $(x_n,y_n) \in U^{**}_{n,g}$. Since $U^{**}_{n,g}\subset U^{**}_{n}$ is a `good' region, 
we can find $m_n' $ and $m_n$ s.t.
$$
|x_n|^{m_n'}\lesssim |y_n| \lesssim |x_n|^{m_n},
$$
where $0\leq m_n\leq m_n'\leq \infty$. 
In addition, if $m_n=m_n'$, then $U^{**}_{n,g}$ is defined by an edge; otherwise by a vertex. 
Dyadically decompose $x_n\sim \sigma_1$ and let 
$\Lambda_{n,g,\sigma_1}$ denote the operator $\Lambda_{n,g}$ when $x_n$ 
is localized to this region. 

Due to the `almost orthogonality' below,
we only need to handle one single $\sigma_1$. % in the case $n\geq 1$ due to the `almost orthogonality' below.
\begin{claim}\label{ah99}
There is a constant $L$ s.t. if
\begin{align}\label{goal99}
\|\Lambda_{n,g,\sigma_1}\| \leq A
\end{align}
for some $A>0$ then 
\begin{align}\label{goal7799}
\sum_{\sigma_1}\|\Lambda_{n,g,\sigma_1}\| \leq LA
\end{align}
\end{claim}

\begin{proof} [Proof of Claim \ref{ah99}.] \q
Recall that 
\begin{align}\label{var99}
y=y(x) =\gamma_n(x)+y_nx^{m_0+\dots+m_{n-1}}
\end{align}
where
$$
 \gamma_n(x) = \sum\limits_{k=0}^{*} r_kx_{0}^{m_{0}+m_{1}+\cdots+m_k}.
$$
with $* = n-1$ or $\infty$; see (\ref{gamma1x}) and (\ref{gamma2x}). 
Set 
$$
Y(\sigma) = \{y(x): x\sim \sigma\}.
$$
For $\sigma$ small enough, one has $|y|\sim |r_0\sigma^{m_0}|$ for $y\in Y(\sigma) $. Thus, one can find $L\in\mathbb N$ such that 
\begin{align}\label{dj35}
Y(\sigma_1)\cap Y(\sigma_2) =\emptyset
\end{align}
given $\sigma_1 \geq 2^{L} \sigma_2$ and $\sigma_1$ is small.  

Consider the congruence classes modulo $L$ for all small $\sigma_1$. Let  
$$
H_\ell =\{\sigma_1=2^{-h}: h\equiv \ell \q mod\q L \}
$$
for $0\leq \ell <L$. Notice that 
$$
\Lambda_{n,g,\sigma_1}(f_1,f_2,f_3)= \Lambda_{n,g,\sigma_1}(f_1\Id_{I_{\sigma_1}},f_2\Id_{Y(\sigma_1)},f_3),
$$
where $I_{\sigma_1}$ denotes such $x$ with $x\sim \sigma_1$. 
The triangle inequality, the Cauchy-Schwartz inequality and (\ref{goal}) yield 
\begin{align*}
\Big|\sum\limits_{\sigma_1\in H_\ell}\Lambda_{n,g,\sigma_1}(f_1,f_2,f_3)\Big|
\leq 
&\sum\limits_{\sigma_1\in H_\ell}\Big|\Lambda_{n,g,\sigma_1}(f_1\Id_{I_{\sigma_1}},f_2\Id_{Y(\sigma_1)},f_3)\Big|
\\
\leq
&
A\sum\limits_{\sigma_1\in H_\ell} \|f_1\Id_{I_{\sigma_1}}\|_2 \|f_2\Id_{Y(\sigma_1)}\|_2 \|f_3\|_2
\\
\leq 
&
A\|\sum\limits_{\sigma_1\in H_\ell} f_1\Id_{I_{\sigma_1}}\|_2 \|\sum\limits_{\sigma_1\in H_\ell}f_2\Id_{Y(\sigma_1)}\|_2 \|f_3\|_2
\end{align*}
which is controlled by $A\|f_1\|_2\|f_2\|_2\|f_3\|_2$ due to (\ref{dj35}). The desired estimates then follow by the triangle inequality  
$$
\Big\|\sum\limits_{\sigma_1}\Lambda_{n,g,\sigma_1}\Big\| \leq L \sup_{0\leq \ell < L}\Big\|\sum\limits_{\sigma_1\in H_\ell}\Lambda_{n,g,\sigma_1}\Big\|\leq LA.
$$
\end{proof}

Dyadically decompose $(x_n,y_n)$ as
  \[ \left\{ \begin{array}{ll}
         |x_n| \sim \sigma_1
         \\
        |y_n| \sim \sigma_2 x_n^{m_n}\sim \sigma_2 \sigma_1^{m_n}.
        \end{array} \right. \] 
Use $U_{n,g}(\sigma_1,\sigma_2)$
and $\Lambda_{n,g,\sigma_1,\sigma_2}$ to denote 
 the corresponding localization of the `good' region $U_{n,g}$ and
the operator $\Lambda_{n,g}$ respectively. 
In what follows, we shall focus on one single $\sigma_1$ and
 prove a bound for $\|\Lambda_{n,g,\sigma_1}\|$ that is independent of $\sigma_1$.

Even in the region $U_{n,g}(\sigma_1,\sigma_2)$, there is some orthogonality that one can employ.  In fact,
 the change of variables (\ref{var99})
yields that the length of every $x$-section of $U_{n,g}(\sigma_1,\sigma_2)$ % (as well as $U^{**}_{n,g}$) 
denoted by
 $\triangle y$ is bounded by 
\begin{align}\label{dy}
\triangle y \lesssim \sigma_2\sigma_1^{m_0+\cdots+m_n}.
\end{align}
In addition, (\ref{var99}) gives   
$$
\triangle y \sim \triangle x \frac{dy}{dx}\sim \triangle x \cdot \sigma_1^{m_0-1}
$$
and thus
\begin{align}\label{dx}
\triangle x \sim \sigma_2 \sigma_1^{m_0+\cdots+m_n-(m_0-1)}.
\end{align}
Notice also $m_0\geq 1$ due to the assumption $|y|\lesssim |x|$ on $U$. 
We then divide the interval $(\sigma_1/2, 2\sigma_1)$ equally into $H$ subintervals $\{I_h\}_{1\leq h\leq H}$, where 
$$
H =K \cdot {\sigma_1}\Big(\sigma_2 \sigma_1^{m_0+\cdots+m_n-(m_0-1)}\Big)^{-1}.
$$
Here $K$ is a large constant, independent of $(\sigma_1,\sigma_2)$ and designed
 to treat the convexity assumption in Theorem \ref{local}. 
Set 
\begin{align}
U^*_{n,g,h} = \{(x,y)\in U^*_{n,g}:x \in I_h \}
\end{align}
and
$$
Y(I_h)=\{y(x):x\in I_h\,\, \mbox{and} \,\, y_n\sim \sigma_2\sigma_1^{m_n}\}
$$
where $y(x)$ is defined in (\ref{var99}). Then $\Lambda_{n,g,\sigma_1,\sigma_2}$ 
can be further decomposed into $\{\Lambda_{n,g,\sigma_1,\sigma_2, h}\}_{1\leq h\leq H}$ by restricting $x\in I_h$. By (\ref{var99}), 
$y=y(x)$ is monotone given $|x|$ sufficiently small. 
Hence, given $L=L(P,\epsilon, K)$ large enough, by (\ref{dy}) and (\ref{dx}) one has
\begin{align}\label{disjoint}
Y(I_h)\cap Y(I_{h'}) = \emptyset \q if \q |h-h'|\geq L,
\end{align}
which implies the following `almost orthogonality' principle, 
\begin{claim}\label{ah}
If there is a constant $A$ s.t. 
\begin{align}\label{goal}
\|\Lambda_{n,g,\sigma_1,\sigma_2, h}\| \leq A,\q\mbox{for all} \q 1\leq h\leq H, 
\end{align}
then  
\begin{align}\label{goal77}
\|\Lambda_{n,g,\sigma_1,\sigma_2}\| \leq LA.
\end{align}
\end{claim}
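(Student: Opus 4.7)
The plan is a standard almost-orthogonality argument organized around the disjointness statement \eqref{disjoint}. Write $T_h = \Lambda_{n,g,\sigma_1,\sigma_2,h}$ so that
$$
\Lambda_{n,g,\sigma_1,\sigma_2} = \sum_{h=1}^{H} T_h.
$$
Partition the index set $\{1,2,\dots,H\}$ into $L$ residue classes modulo $L$:
$$
G_\ell = \{h : 1\leq h\leq H,\; h\equiv \ell\,(\mathrm{mod}\, L)\}, \qquad \ell=0,1,\dots,L-1.
$$
Any two distinct indices $h,h'\in G_\ell$ satisfy $|h-h'|\geq L$. By construction the $x$-intervals $I_h$ are pairwise disjoint, and by \eqref{disjoint} the corresponding $y$-ranges $Y(I_h)$ and $Y(I_{h'})$ are disjoint as well.

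Next, for each $h$ set $f_1^h = f_1\cdot \mathbf{1}_{I_h}$ and $f_2^h = f_2\cdot \mathbf{1}_{Y(I_h)}$. Since $T_h$ is supported in $x\in I_h$ and (through the change of variables) in $y\in Y(I_h)$, we have $T_h(f_1,f_2,f_3) = T_h(f_1^h,f_2^h,f_3)$. Applying the hypothesis \eqref{goal} to each summand and then Cauchy--Schwarz twice, we estimate for each $\ell$:
\begin{align*}
\left|\sum_{h\in G_\ell} T_h(f_1,f_2,f_3)\right|
&\leq A\,\|f_3\|_2 \sum_{h\in G_\ell}\|f_1^h\|_2\|f_2^h\|_2\\
&\leq A\,\|f_3\|_2 \Big(\sum_{h\in G_\ell}\|f_1^h\|_2^2\Big)^{1/2}\Big(\sum_{h\in G_\ell}\|f_2^h\|_2^2\Big)^{1/2}\\
&\leq A\,\|f_1\|_2\|f_2\|_2\|f_3\|_2,
\end{align*}
where the last inequality uses the pairwise disjointness of $\{I_h\}_{h\in G_\ell}$ and of $\{Y(I_h)\}_{h\in G_\ell}$. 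Hence $\|\sum_{h\in G_\ell} T_h\|\leq A$.

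Finally, summing the triangle inequality over the $L$ residue classes yields
$$
\|\Lambda_{n,g,\sigma_1,\sigma_2}\|
= \Big\|\sum_{\ell=0}^{L-1}\sum_{h\in G_\ell} T_h\Big\|
\leq \sum_{\ell=0}^{L-1}\Big\|\sum_{h\in G_\ell} T_h\Big\|
\leq LA,
$$
which is \eqref{goal77}. The only substantive input beyond bookkeeping is the geometric separation \eqref{disjoint}, already established via the width bounds \eqref{dy}--\eqref{dx}; without it the $f_2^h$ would not have disjoint supports and one could not avoid losing a factor of $H$ in place of $L$.
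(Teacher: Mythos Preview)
Your proof is correct and follows essentially the same approach as the paper: partition the indices into $L$ residue classes modulo $L$, localize $f_1$ and $f_2$ to $I_h$ and $Y(I_h)$ respectively, apply the bound \eqref{goal} termwise together with Cauchy--Schwarz and the disjointness \eqref{disjoint}, and then sum over the $L$ classes by the triangle inequality.
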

The proof of Claim \ref{ah} is almost identical to that of Claim \ref{ah99} and it is omitted. 
To prove (\ref{goal}) for 
 a desired bound $A$, 
we also need the following lemma which is similar to Lemma \ref{c01} and whose proof can be found 
at the end of this section. 
\begin{lemma}\label{c02}
There is a $K=K(P,\epsilon,n)$, such that for $1\leq h\leq H$, one has
$$
{\rm Conv} (\rho_n^{-1}(U_{n,g,h}^*))\subset \rho^{-1}_n(U_{n,g}^{**}).
$$
\end{lemma}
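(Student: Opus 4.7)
The plan is to exploit that $\rho_n$ preserves $x$, so for a convex combination $(x,y)=t(x',y')+(1-t)(x'',y'')$ of two points in $\rho_n^{-1}(U^*_{n,g,h})$, the $x$-coordinate automatically lies in the convex interval $I_h$, and only the resulting $y_n$-value needs control. Writing the change of variables as $y=\gamma(x)+y_n\,x^{M_{n-1}}$, where $\gamma(x)=\sum_{k<n}r_k x^{M_k}$ and $M_k=m_0+\cdots+m_k$, I would express $y_n-[ty'_n+(1-t)y''_n]$ as the sum of a \emph{convexity defect of $\gamma$}, namely $[t\gamma(x')+(1-t)\gamma(x'')-\gamma(x)]/x^{M_{n-1}}$, and a \emph{denominator defect}, of the form $ty'_n[(x'/x)^{M_{n-1}}-1]+(1-t)y''_n[(x''/x)^{M_{n-1}}-1]$. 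In the infinite-chain case (\ref{change09}), $\gamma$ is replaced by the analytic series $f(x_{n_0})$, with no change to the estimates.

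First I would bound each defect. The convexity defect of $\gamma$ is at most $\tfrac12\|\gamma''\|_{L^\infty(I_h)}(x'-x'')^2/x^{M_{n-1}}\lesssim\sigma_1^{m_0-2}|I_h|^2/\sigma_1^{M_{n-1}}$, using that the leading $r_0 x^{m_0}$ term of $\gamma$ dominates on a neighborhood of $0$. For the denominator defect, setting $u=(x'-x'')/x$ with $|u|\lesssim|I_h|/\sigma_1$, a Taylor expansion at the averaged point $x=tx'+(1-t)x''$ shows the $O(u)$ contributions combine to give $t(1-t)M_{n-1}u(y'_n-y''_n)+O(u^2|y_n|)$; since $|y'_n-y''_n|\lesssim|y_n|$ (both points occupy the same dyadic $y_n$-strip of scale $\sigma_2\sigma_1^{m_n}$), this is controlled by $C|u||y_n|$.

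Putting these together and substituting the explicit width $|I_h|\sim\sigma_2\sigma_1^{1+m_1+\cdots+m_n}/K$, each error contribution reduces to at most $C|y_n|/K$ relative to the fixed scale $|y_n|\sim\sigma_2\sigma_1^{m_n}$. Since $U^{**}_{n,g}$ is obtained from $U^*_{n,g}$ by enlargement through a fixed multiplicative constant (the ratios between, e.g., $c_E/2$ and $c_E/4$, or $2\epsilon$ and $4\epsilon$), choosing $K=K(P,\epsilon)$ sufficiently large forces the $y_n$-value of the convex combination to lie in $U^{**}_{n,g}$, giving the claim.

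The main technical subtlety I anticipate is verifying that a single choice of $K$ works uniformly in the vertex-defined case, where the $y_n$-corridor of $U^*_{n,g}$ has a nontrivial aspect ratio bounded by two distinct exponents $m_{n,l}<m_{n,r}$ (possibly $m_{n,r}=\infty$), so that both the upper and lower boundaries must be cleared simultaneously. However, because the dyadic localization in $\sigma_2$ already fixes $|y_n|$ up to a constant, the relative-error bound is uniform on each dyadic piece, and the uniform enlargement ratios between the $U^*$- and $U^{**}$-boundaries absorb the $O(1/K)$ error on both sides. The same template, specialized to $n=0$ with $\gamma\equiv 0$, proves Lemma \ref{c01} by reducing to the quadratic defect of the single curve $y=rx^{m_E}$ on an interval of length $|I_k|\sim\sigma_1/K$.
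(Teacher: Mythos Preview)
Your approach is correct and reaches the same conclusion, but it is organized quite differently from the paper's argument. The paper never passes to the $(x_n,y_n)$ coordinates; instead it works directly with the upper and lower boundary curves $\gamma_1,\gamma_2$ of $\rho_n^{-1}(U^*_{n,g,h})$ and $\gamma_1^*,\gamma_2^*$ of $\rho_n^{-1}(U^{**}_{n,g})$ in the original $(x,y)$-plane. After observing (WLOG $r_0>0$) that these curves are monotone increasing, the paper reduces the convex-hull inclusion to the single endpoint inequality $\gamma_1(\sigma_{1,h+1})<\gamma_1^*(\sigma_{1,h})$, and then obtains this from the Mean Value Theorem: $\gamma_1(\sigma_{1,h+1})-\gamma_1(\sigma_{1,h})\le (C/K)\sigma_2\sigma_1^{m_0+\cdots+m_n}$, while the fixed separation $\gamma_1^*-\gamma_1$ is $\gtrsim\sigma_1^{m_0+\cdots+m_n}$; since $\sigma_2<1$, a large $K$ finishes. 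So the paper uses only a \emph{first}-derivative bound and monotonicity, whereas you compute the $y_n$-value of a generic convex combination, split it into a convexity defect of $\gamma$ (controlled by $\|\gamma''\|$) and a denominator defect (controlled by Taylor expansion of $(x'/x)^{M_{n-1}}$), and then invoke the dyadic $\sigma_2$-localization to make both $O(|y_n|/K)$.

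Each method has its advantages. The paper's monotonicity reduction is cleaner and avoids the second-order Taylor bookkeeping, but it leans on the WLOG choice $r_0>0$ and the claim that only the upper boundary needs checking. Your coordinate computation is more mechanical and makes the role of the width $|I_h|$ fully explicit; it also transparently shows the convexity defect is actually $O(1/K^2)$, better than needed. One point worth tightening in your write-up: the lemma as stated concerns the full $U^*_{n,g,h}$, not its $\sigma_2$-localized slice, so strictly speaking your bound $|y_n'-y_n''|\lesssim|y_n|$ (which invokes ``same dyadic $y_n$-strip'') proves only the $\sigma_2$-localized inclusion. That is exactly what the application to Theorem~\ref{local} requires, and in the edge case (where $|y_n|\sim\sigma_1^{m_n}$ automatically) it coincides with the full statement; in the vertex case the paper's monotonicity argument handles the full region in one stroke, whereas your argument should be read as applying dyadic shell by dyadic shell.
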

With all the preparation above, we now complete the proof of Proposition \ref{p3}. 
Invoking Lemma \ref{schur}, Theorem \ref{rs}, Corollary \ref{rs*} and Theorem \ref{local} yields
\begin{align}\label{aug}
\|\Lambda_{n,g,\sigma_1,\sigma_2,h}\| \lesssim
\left\{\begin{array}{rcl}
|\lambda \sigma_1^{p_n+q_nm_n} \sigma_2^{q_n}|^{-1/6}
\\
\\
(\sigma_2\sigma_1^{m_0+\cdots+m_n})^{1/2},
\end{array}\right.
\end{align}
for every $1\leq h\leq H$. By Claim \ref{ah}, summing over $h$ yields

\begin{align}\label{aug1}
\|\Lambda_{n,g,\sigma_1,\sigma_2}\| \lesssim
\left\{\begin{array}{rcl}
|\lambda \sigma_1^{p_n+q_nm_n} \sigma_2^{q_n}|^{-1/6}
\\
\\
(\sigma_2\sigma_1^{m_0+\cdots+m_n})^{1/2}.
\end{array}\right.
\end{align}
Balancing these two estimates via $\sigma_1$ yields 
\begin{align}\label{sum}
\|\Lambda_{n,g,\sigma_1,\sigma_2}\| \lesssim |\lambda|^{-\frac{1}{2(3+(p_n+q_nm_n)/(m_0+\dots+m_n))}}\cdot 
			\sigma_2^{\frac{p_n+m_nq_n-q_n(m_0+\cdots+m_n)}{3(m_0+\cdots+m_n)+p_n+m_nq_n}}.
\end{align}
By (\ref{ind1}) and (\ref{ind2}), we conclude that 
\begin{align}\label{qq78}
p_n+m_nq_n-q_n(m_0+\cdots+m_n)\geq 0.
\end{align}
In the case (\ref{qq78}) is strictly positive, 
we can sum over $\sigma_2$ in (\ref{sum}), which yields 
\begin{align}
\sum_{\sigma_2}\|\Lambda_{n,g,\sigma_1,\sigma_2,h}\| \lesssim |\lambda|^{-\frac{1}{2(3+(p_n+q_nm_n)/(m_0+\dots+m_n))}} \leq 
|\lambda|^{-\frac{1}{2(3+d_E)}}, 
\end{align}
where the latter inequality will be proved in a moment. \\

Otherwise 
\begin{align}\label{critical}
p_n+m_nq_n-q_n(m_0+\cdots+m_n)= 0,
\end{align}
which implies
\begin{align}\label{con78}
\begin{cases}
p_{0}=0
\\
q_{0}=s_0= q_{1}= q_{2}= \cdots = q_{n-1}= q_n.
\end{cases}
\end{align}
Then (\ref{aug1}) becomes:
\begin{align}\label{aug99999}
\|\Lambda_{n,g,\sigma_1,\sigma_2}\| \lesssim
\left\{\begin{array}{rcl}
|\lambda \sigma_1^{q_{0}(m_0+\cdots+m_n)} \sigma_2^{q_{0}}|^{-1/6}
\\
\\
(\sigma_2\sigma_1^{m_0+\cdots+m_n})^{1/2}.
\end{array}\right.
\end{align}
Summing over $\sigma_2$ yields
\begin{align*}
\left\|\Lambda_{n,g,\sigma_1}\right\|=\left\|\sum_{\sigma_2}\Lambda_{n,g,\sigma_1,\sigma_2}\right\| \leq \sum_{\sigma_2}\left\|\Lambda_{n,g,\sigma_1,\sigma_2}\right\| \lesssim |\lambda|^{-\frac{1}{2(3+q_{0})}}
= |\lambda|^{-\frac{1}{2(3+d_E)}}
\end{align*}
The last equality comes from $p_{0}=0$,  $m_0\geq 1$ and thus ${\rm mult}(P)=d_E=q_0$. 

It remains to verify  $(p_n+q_nm_n)/(m_0+\dots+m_n) \leq d_E$. Indeed, 
 (\ref{ind2}) yields
%notice that $V_k=(p_{k-1}+m_{k-1}s_{k-1},s_{k-1})$ as the far left vertex of the Newton Polyhedron in the $k$-stage, is above the line passing $(p_k,q_k)$ with slope $-\frac{1}{m_k}$. Thus 
%\begin{align}
%p_{k}+m_kq_{k}\leq  p_{k-1}+m_{k-1}s_{k-1}+m_ks_{k-1} \leq  p_{k-1}+m_{k-1}q_{k-1}+m_kq_{0},
 %\end{align}
%since $q_0 \geq q_{k-1}\geq s_{k-1}$ for all $ k\geq 1$. 
%Iterating the above formula yields 
%\begin{align}
%p_{n}+m_nq_{n}\leq  p_{0}+m_{0}q_{0}+m_1q_0 +\dots +m_nq_0 %\leq  p_{k-1}+m_{k-1}q_{k-1}+m_kq_{0}.
% \end{align}
%Therefore
\begin{align}
\frac{p_n+q_nm_n}{m_0+\cdots+m_n} \leq
\frac{(p_{0}+m_0{q_{0}})+s_{0}(m_1+\cdots+m_n)}{m_0+(m_1+\cdots+m_n)} 
\leq \frac{p_{0}+m_0{q_{0}}}{m_0} = d_E,
\end{align}
since 
%$m_0\geq 1$ implies
\begin{align}
 s_0 \leq  q_0  \leq \frac{p_{0}+m_0{q_{0}}}{m_0} \leq  d_E .
\end{align}

\subsection{Verification of Lemma \ref{c01} and Lemma \ref{c02}} 
\q\q

We only provide the proof of Lemma \ref{c02} for the other is similar and even simpler. 
First, notice that the upper and the lower boundaries of $U^*_{n,g,h}$ can be represented by two curves:
\begin{align*}
&\bar\gamma_1(x) =r_0x^{m_0}+r_1x^{m_0+m_1}+\cdots+r_{n-1}x^{m_0+\cdots+m_{n-1}}+r_{n,1}x^{m_0+\cdots+m_{n-1}+m_{n}}
\\
&\bar\gamma_2(x)= r_0x^{m_0}+r_1x^{m_0+m_1}+\cdots+r_{n-1}x^{m_0+\cdots+m_{n-1}}+r_{n,2}x^{m_0+\cdots+m_{n-1}+m_{n}'}
\end{align*}
and the upper and the lower boundaries of $U^{**}_{n,g,h}$ by
\begin{align*}
&\bar\gamma_1^*(x) =  r_0x^{m_0}+r_1x^{m_0+m_1}+\cdots+r_{n-1}x^{m_0+\cdots+m_{n-1}}+r^*_{n,1}x^{m_0+\cdots+m_{n-1}+m_{n}}
\\
&\bar\gamma_2^*(x)= r_0x^{m_0}+r_1x^{m_0+m_1}+\cdots+r_{n-1}x^{m_0+\cdots+m_{n-1}}+r^*_{n,2}x^{m_0+\cdots+m_{n-1}+m_{n}'}
\end{align*}
where $r_{n,1}<r^*_{n,1}$, $r_{n,2}>r^*_{n,2}$ and $0\leq m_n\leq m_n'$. If $m_n= m_n'$, we have in addition $r_{n,1}>r_{n,2}$. 
Notice that all the curves above have only 
finitely many terms of fractional powers of $x$, 
even in the case when $n=n_0-1$ for some $n_0$ defined as in (\ref{*}). 
Assume without loss of generality $r_0>0$
%$r_{n,1}, r_{n,2}, r_{n,1}^*, r_{n,2}^*>0$. 
and in particular all the curves above are increasing functions of $x$. 
The assumption $m_0\geq 1$ (see (\ref{AUK})) implies that the curves $\bar\gamma_1$, $\bar\gamma_2$, $\bar\gamma_1^*$ and $\bar\gamma_2^*$ 
are concave up. Thus we only need to take care of the upper boundary of $U^*_{n,g,h}$.
 Use $\sigma_{1,h}$ to denote the left end point of the interval $I_h$, for $1\leq h\leq H$.
 By the definition of convexity, one needs to verify that if $K$ is sufficiently large, then 
\begin{align}\label{tp}
t\bar\gamma_1( \sigma_{1,h})+(1-t)\bar\gamma_1(\sigma_{1,h+1}) <\bar\gamma^*_1(t\sigma_{1,h}+(1-t)\sigma_{1,h+1}) 
\end{align}
for all $0\leq t\leq 1$ and $\sigma_1>0$ sufficiently small. 

Since both $\bar\gamma_1$ and $\bar\gamma_1^*$ are increasing functions,
 it suffices to show 
\begin{align}\label{tp56}
\bar\gamma_1( \sigma_{1,h+1}) <\bar\gamma^*_1(\sigma_{1,h}) .
\end{align}
By the Mean Value Theorem, there is a $\sigma\in I_h$ s.t. 
\begin{align}
\bar\gamma_1( \sigma_{1,h+1})-\bar\gamma_1( \sigma_{1,h}) =\bar\gamma_1'(\sigma) (\sigma_{1,h+1}-\sigma_{1,h} )
=\frac{3}{2K} \sigma_2\sigma_1^{m_0+\cdots+m_n-(m_0-1)}\bar\gamma_1'(\sigma).
\end{align}
Since $\sigma_1/2\leq\sigma\leq 2\sigma_1$, then $|\bar\gamma'_1(\sigma)| \leq C\sigma_1^{m_0-1}$
for constant $C$. Thus
\begin{align}\label{bi1}
\bar\gamma_1( \sigma_{1,h+1})-\bar\gamma_1( \sigma_{1,h}) \leq \frac{C}{K}\sigma_2\sigma_1^{m_0+\cdots+m_n}<\frac{C}{K}\sigma_1^{m_0+\cdots+m_n},
\end{align}
since $\sigma_2<1$. On the other hand 
\begin{align}\label{bi2}
\bar\gamma_1^*(\sigma_{1,h}) -\bar\gamma_1(\sigma_{1,h}) = (r^*_{1,n}-r_{1,n})\sigma_{1,h}^{m_0+\dots+m_n}\geq (r^*_{1,n}-r_{1,n})(\sigma_1/2)^{m_0+\cdots+m_n}
\end{align}
Thus by choosing
$$
K> \frac{C}{r^*_{1,n}-r_{1,n}} \cdot  2^{m_0+\cdots+m_n},
$$
  (\ref{bi1}) and (\ref{bi2}) imply (\ref{tp56}) and thus (\ref{tp}). 
\newline

%------------------------------------------------------------------------------------------------------------------------------------------------------------------------------------------------------------------------
%
%
%
%
%  end 
%
%
%
%------------------------------------------------------------------------------------------------------------------------------------------------------------------------------------------------------------------------

\bibliographystyle{plain}

%\bibliographystyle{te}

% \bib, bibdiv, biblist are defined by the amsrefs package.
\begin{bibdiv}
\begin{biblist}

\bib{CHR11-1}{article}{
      author={Christ, Michael},
       title={Bounds for multilinear sublevel sets via {S}zemer\'edi's
  theorem},
        date={2011},
     journal={arXiv preprint arXiv:1107.2350},
}

\bib{CHR11-2}{article}{
      author={Christ, Michael},
       title={Multilinear oscillatory integrals via reduction of dimension},
        date={2011},
     journal={arXiv preprint arXiv:1107.2352},
}

\bib{CLTT05}{article}{
      author={Christ, Michael},
      author={Li, Xiaochun},
      author={Tao, Terence},
      author={Thiele, Christoph},
       title={On multilinear oscillatory integrals, nonsingular and singular},
        date={2005},
        ISSN={0012-7094},
     journal={Duke Math. J.},
      volume={130},
      number={2},
       pages={321\ndash 351},
}

\bib{CHS11}{article}{
      author={Christ, Michael},
      author={Oliveira~e Silva, Diogo},
       title={On trilinear oscillatory integrals},
        date={2014},
        ISSN={0213-2230},
     journal={Rev. Mat. Iberoam.},
      volume={30},
      number={2},
       pages={667\ndash 684},
         url={http://dx.doi.org/10.4171/RMI/795},
}

\bib{CUT04}{book}{
      author={Cutkosky, Steven~Dale},
       title={Resolution of singularities},
      series={Graduate Studies in Mathematics},
   publisher={American Mathematical Society},
     address={Providence, RI},
        date={2004},
      volume={63},
        ISBN={0-8218-3555-6},
      review={\MR{2058431 (2005d:14022)}},
}

\bib{GR04}{article}{
      author={Greenblatt, Michael},
       title={A direct resolution of singularities for functions of two
  variables with applications to analysis},
        date={2004},
        ISSN={0021-7670},
     journal={J. Anal. Math.},
      volume={92},
       pages={233\ndash 257},
         url={http://dx.doi.org/10.1007/BF02787763},
      review={\MR{2072748 (2005f:42021)}},
}

\bib{GR05}{article}{
      author={Greenblatt, Michael},
       title={Sharp {$L^2$} estimates for one-dimensional oscillatory integral
  operators with {$C^\infty$} phase},
        date={2005},
        ISSN={0002-9327},
     journal={Amer. J. Math.},
      volume={127},
      number={3},
       pages={659\ndash 695},
  url={http://muse.jhu.edu/journals/american_journal_of_mathematics/v127/127.3greenblatt.pdf},
      review={\MR{2141648 (2006e:42028)}},
}

\bib{GR08}{article}{
      author={Greenblatt, Michael},
       title={Simply nondegenerate multilinear oscillatory integral operators
  with smooth phase},
        date={2008},
        ISSN={1073-2780},
     journal={Math. Res. Lett.},
      volume={15},
      number={4},
       pages={653\ndash 660},
      review={\MR{2424903 (2009f:42008)}},
}

\bib{GMT07}{article}{
      author={Greenleaf, Allan},
      author={Pramanik, Malabika},
      author={Tang, Wan},
       title={Oscillatory integral operators with homogeneous polynomial phases
  in several variables},
        date={2007},
        ISSN={0022-1236},
     journal={Journal of Functional Analysis},
      volume={244},
      number={2},
       pages={444 \ndash  487},
  url={http://www.sciencedirect.com/science/article/pii/S0022123606004666},
}

\bib{GX15}{article}{
      author={Gressman, Philip~T.},
      author={Xiao, Lechao},
       title={Maximal decay inequalities for trilinear oscillatory integrals of
  convolution type},
     journal={arXiv preprint http://arxiv.org/abs/1511.05233},
}

\bib{HI64}{article}{
      author={Hironaka, Heisuke},
       title={Resolution of singularities of an algebraic variety over a field
  of characteristic zero. {I}, {II}},
        date={1964},
        ISSN={0003-486X},
     journal={Ann. of Math. (2) 79 (1964), 109--203; ibid. (2)},
      volume={79},
       pages={205\ndash 326},
      review={\MR{0199184 (33 \#7333)}},
}

\bib{HOR73}{article}{
      author={H{\"o}rmander, Lars},
       title={Oscillatory integrals and multipliers on {$FL^{p}$}},
        date={1973},
        ISSN={0004-2080},
     journal={Ark. Mat.},
      volume={11},
       pages={1\ndash 11},
}

\bib{LI08}{article}{
      author={Li, Xiaochun},
       title={Bilinear {H}ilbert transforms along curves, {I}: the monomial
  case},
        date={2013},
        ISSN={2157-5045},
     journal={Anal. PDE},
      volume={6},
      number={1},
       pages={197\ndash 220},
}

\bib{PS94-2}{article}{
      author={Phong, D.~H.},
      author={Stein, E.~M.},
       title={Models of degenerate {F}ourier integral operators and {R}adon
  transforms},
        date={1994},
        ISSN={0003-486X},
     journal={Ann. of Math. (2)},
      volume={140},
      number={3},
       pages={703\ndash 722},
         url={http://dx.doi.org/10.2307/2118622},
      review={\MR{1307901 (96c:35206)}},
}

\bib{PS94-1}{article}{
      author={Phong, D.~H.},
      author={Stein, E.~M.},
       title={Operator versions of the van der {C}orput lemma and {F}ourier
  integral operators},
        date={1994},
        ISSN={1073-2780},
     journal={Math. Res. Lett.},
      volume={1},
      number={1},
       pages={27\ndash 33},
      review={\MR{1258486 (94k:35347)}},
}

\bib{PS97}{article}{
      author={Phong, D.~H.},
      author={Stein, E.~M.},
       title={The {N}ewton polyhedron and oscillatory integral operators},
        date={1997},
        ISSN={0001-5962},
     journal={Acta Math.},
      volume={179},
      number={1},
       pages={105\ndash 152},
         url={http://dx.doi.org/10.1007/BF02392721},
      review={\MR{1484770 (98j:42009)}},
}

\bib{PSS01}{article}{
      author={Phong, D.~H.},
      author={Stein, E.~M.},
      author={Sturm, Jacob},
       title={Multilinear level set operators, oscillatory integral operators,
  and {N}ewton polyhedra},
        date={2001},
        ISSN={0025-5831},
     journal={Math. Ann.},
      volume={319},
      number={3},
       pages={573\ndash 596},
         url={http://dx.doi.org/10.1007/PL00004450},
      review={\MR{1819885 (2002f:42019)}},
}

\bib{RY01}{article}{
      author={Rychkov, Vyacheslav~S.},
       title={Sharp {$L^2$} bounds for oscillatory integral operators with
  {$C^\infty$} phases},
        date={2001},
        ISSN={0025-5874},
     journal={Math. Z.},
      volume={236},
      number={3},
       pages={461\ndash 489},
         url={http://dx.doi.org/10.1007/PL00004838},
}

\bib{SEE99}{article}{
      author={Seeger, Andreas},
       title={Radon transforms and finite type conditions},
        date={1998},
        ISSN={0894-0347},
     journal={J. Amer. Math. Soc.},
      volume={11},
      number={4},
       pages={869\ndash 897},
         url={http://dx.doi.org/10.1090/S0894-0347-98-00280-X},
}

\bib{VAR76}{article}{
      author={Var{\v{c}}enko, A.~N.},
       title={Newton polyhedra and estimates of oscillatory integrals},
        date={1976},
        ISSN={0374-1990},
     journal={Funkcional. Anal. i Prilo\v zen.},
      volume={10},
      number={3},
       pages={13\ndash 38},
      review={\MR{0422257 (54 \#10248)}},
}

\bib{X2015}{article}{
      author={Xiao, Lechao},
       title={Endpoint estimates for one-dimensional oscillatory integral operators},
     journal={In preparation},
}

\end{biblist}
\end{bibdiv}

\end{document}